\documentclass{amsart}
\usepackage[a4paper, 
          bindingoffset=0.2in,
            left=1in,
            right=1in,
            top=1in,
            bottom=1in,
            footskip=.25in]{geometry}
\usepackage[T1]{fontenc}
\usepackage[utf8]{inputenc}
\usepackage{bussproofs}
\usepackage{graphicx}
\usepackage{amsthm}
\usepackage{amsmath}
\usepackage{amsfonts}
\usepackage{amssymb}
\usepackage{amsbsy}
\usepackage{fancyhdr}
\usepackage{mathrsfs}
\usepackage{lmodern}
\usepackage{graphicx}
\usepackage{xcolor, enumerate}
\usepackage[colorlinks,allcolors=blue]{hyperref}
\usepackage[nameinlink,capitalize]{cleveref}
\usepackage{tikz-cd}
\usepackage{mathtools}
\usepackage{parskip}
\usepackage{enumitem}
\usetikzlibrary{positioning}
\usepackage{tikz}
\usetikzlibrary{patterns}
\usepackage[absolute,overlay]{textpos}

%\usepackage{babel}

% Para facilicar a escrita de certos comandos 

\newcommand{\F}{\mathbb{F}}
\newcommand{\PP}{\mathbb{P}}

\newcommand{\C}{\mathcal{C}}

\newcommand{\LL}{\mathcal{L}}

\newcommand{\CL}{\mathcal{C}_\mathcal{L}}

\newcommand{\cT}{\mathcal{T}}

%Outros commandos 
\def\fq{\mathbb{F}_{q}}
\def\fqs{\mathbb{F}_{q^2}}

\def\cF{\mathcal{F}}

\def\cX{\mathcal{X}}
\def\cC{\mathcal{C}}
\def\cF{\mathcal{F}}

\def\cX{\mathcal{X}}

\def\Z{\mathbb{Z}}
\def\cD{\mathcal{D}}
\def\cF{\mathcal{F}}

\def\ff{\mathbb{F}}

%Declarando novos operadores para o texto ficar visualmente mais agradável

\DeclareMathOperator{\lmd}{lmd}
\DeclareMathOperator{\Div}{Div}
\DeclareMathOperator{\Diff}{Diff}

\DeclareMathOperator{\Supp}{Supp}

\DeclareMathOperator{\res}{res}

\DeclarePairedDelimiter\ceil{\lceil}{\rceil}
\DeclarePairedDelimiter\floor{\lfloor}{\rfloor}

\makeatletter
\renewcommand*\env@matrix[1][*\c@MaxMatrixCols c]{%
  \hskip -\arraycolsep
  \let\@ifnextchar\new@ifnextchar
  \array{#1}}
\makeatother

\newtheorem{definition}[subsection]{Definition}

\newtheorem{lemma}[subsection]{Lemma}
\newtheorem{theorem}[subsection]{Theorem}

\newtheorem{proposition}[subsection]{Proposition}
\newtheorem{corollary}[subsection]{Corollary}

\begin{document} 
	\numberwithin{equation}{section}
	\title{Sequences of LCD AG codes and LCP of AG Codes attaining the Tsfasman-Vladut-Zink bound}
	\author[A.\@ Marques,  L.\@ Quoos]{Adler Marques, Luciane Quoos}
	\dedicatory{May 2025}
	\keywords{LCD codes, LCP of codes, AG-codes, tower of function fields}
	\thanks{2020
		 {\it Mathematics Subject Classification.} Primary 14G50, secondary 
		11T71, 94B27, 14Q05.}
	\thanks{The first and second authors were partially financed by Coordenação de Aperfeiçoamento de Pessoal de Nível Superior - Brasil (CAPES) - Finance Code 001.
	The second author was also financed by Conselho Nacional de Desenvolvimento Cientíifico e Tecnológico - Brasil (CNPq) - Project 307261/2023-9, and Fundação Carlos Chagas Filho de Amparo à Pesquisa do Estado do Rio de Janeiro - Brasil (Faperj) - Project E-26/204.037/2024.}
	
	\address{Adler Marques, Universidade Federal do Rio de Janeiro, Centro de Tecnologia, Cidade Universitária, Av.\@ Athos da Silveira Ramos 149, Ilha do Fundão, CEP~21.941-909, Brazil. {\it E-mail: adler@im.ufrj.br}}

	\address{Luciane Quoos, Universidade Federal do Rio de Janeiro, Centro de Tecnologia, Cidade Universitária, Av.\@ Athos da Silveira Ramos 149, Ilha do Fundão, CEP~21.941-909, Brazil. {\it E-mail: luciane@im.ufrj.br}}

\begin{abstract}
	Since Massey introduced linear complementary dual (LCD) codes in 1992 and Bhasin et al. later formalized linear complementary pairs (LCPs) of codes -- structures with important cryptographic applications -- these code families have attracted significant interest. We construct infinite sequences  $(C_i)_{i \ge 1}$ of LCD codes and of LCPs $(\C', \cD')_{i \ge 1}$ over $\F_{q^2}$ obtained from the Garcia-Stichtenoth tower of function fields, where we describe suitable non-special divisors of small degree on each level of the tower. These families attain the Tsfasman–Vlăduţ–Zink bound and, for sufficiently large $q$ exceed the classic Gilbert–Varshamov bound, providing explicit asymptotically good constructions beyond existential results. We also exhibit infinite sequences of self-orthogonal over $\F_{q^2}$ and, when $q$ is even, self-dual codes from the same tower all meeting the Tsfasman–Vlăduţ–Zink bound.
\end{abstract}

\maketitle

\section{Introduction}

Let $\fq$ be a finite field with $q$ elements. A linear code $\C$ over  $\fq$ is an  $\fq$-vector subspace of $\fq^n$. Associated to a linear code we have three  fundamental parameters, the length $n=n(\C)$, dimension $k=k(\C)$, and minimum (Hamming) distance $d=d(\C)$.
We say the code $\C$ is an  $[n, k ,d]$-code.

Let $\cC$ be a linear $[n, k, d]$-code over $\fq$. The ratios 
	$$R(\cC ) = \tfrac{k(\cC )}{n(\cC)}=\tfrac{k}{n} \qquad \text{ and } \qquad \delta (\cC ) = \tfrac{d(\cC)}{n(\cC)}=\tfrac{d}{n},$$ 
 denote the \textit{information rate}
and the \textit{relative minimum distance} of the code, respectively. Let $U_q \subseteq [0,1] \times [0,1]$ be defined as the set of points $(R, \delta) \in \mathbb{R}^2$ such that there exists a sequence of codes $(\C_i)_{i \ge 1}$ satisfying 
\[
n(\C_i) \to \infty, \quad R(\C_i) \to R \quad \mbox{and} \quad \delta(\C_i) \to \delta \quad \mbox{as} \quad  i \to \infty.
\]

In \cite{Manin81}, Y. Manin showed that there is a non-increasing and continuous function $\alpha_q \colon [0,1] \to [0,1]$ defined by
\[ 
\alpha_q(\delta) = \sup\{ R \mid (\delta, R) \in U_q, \mbox{ for } \delta \in [0,1] \, \}
\]
such that $\alpha_q(0)=1$, and $\alpha_q(\delta)=0$, for $1-q^{-1} \le \delta \le 1$.
A lower bound for $\alpha_q(\delta)$ was given by Gilbert-Varshamov, \cite[Prop.VII.2.3]{STICH2008}, and the exact value of $\alpha_q(\delta)$ is still a topic of investigation. Let $H_q(\delta)$ be the \textit{$q$-ary entropy function} $H_q \colon [0, 1-q^{-1}] \to \mathbb{R}$ given by  $H_q(0)=0$, and for  $0 < x \le 1 - q^{-1}$
\[
H_q(x) := x \log_q(q-1) - x \log_q(x) - (1-x) \log_q(1-x)
\]

The \textit{Gilbert-Varshamov bound} lower bound for $\alpha_q(\delta)$ states that 
\begin{equation}
\label{eq:GV_bound}
\alpha_q(\delta) \ge 1 - H_q(\delta) \quad \mbox{ for all } 0 < \delta < 1-q^{-1}.
\end{equation}

In 1982, using algebraic geometry codes from algebraic curves
over finite fields, Tsfasman-Vladut-Zink established a new lower bound for $\alpha_q(\delta)$, see \cite{TVZ82}.
Let $\cX$ be an algebraic absolutely irreducible non-singular curve of genus $g$ over $\fq$. Denote by $\cX(\F_q)$ the number of $\fq$ rational points on $\cX$ and
$$N_q(g):=\max_{\cX} \# \cX(\F_q),$$
where $\cX$ runs over the curves defined over $\fq$ of (fixed) genus $g$. The asymptotic behavior  of such curves over the fixed field $\fq$ when the genus goes to infinity is denoted by
 $$A(q):=\limsup_{g \to \infty} N_q(g)/g.$$ 
 The   \textit{Drinfeld-Vlăduţ bound}  states that $A(q) \le \sqrt{q}-1$, see \cite{DV1983}.
If $\fq$ is a finite field such that $A(q) >1,$ then for each real number satisfying $0 \leq  \delta \leq 1$,   Tsfasman-Vlăduţ-Zink  showed that
 \begin{equation} 
    \label{eq:TVZ_bound1}
    \alpha_q(\delta) \geq 1 - \delta -A(q)^{-1},
    \end{equation}
 The same authors also proved that  for $q = \ell^2$ the equality $A(\ell^2) =  \ell -1$ occurs. The lower bound above on the function 
$\alpha_q(\delta)$  was an astonishing result for coding theorists,
since for $q >49$ a square, it gives  an improvement on the Gilbert-Varshamov bound for values of $\delta $  in a certain small interval. As a consequence, for $q=\ell^2$ a square and for all $0 \le \delta \le 1 - (\ell - 1)^{-1}$, there exists an asymptotically good sequence of algebraic geometry codes $(\C_i)_{i \ge 1}$ over $\mathbb{F}_{\ell^2}$ such that 
    $\lim_{i \to \infty} R(\C_i)=R$ and $\lim_{i \to \infty} \delta(\C_i)=\delta$
    satisfying
    \begin{equation} \label{eq:TVZ_bound}
    R + \delta \ge 1 - \frac{1}{\ell - 1}, 
    \end{equation}
 the so-called Tsfasman-Vlăduţ-Zink bound (TVZ-bound).
This leads to the following natural definition. An infinite sequence $\{\mathcal{C}_i\}_{i \ge 1}$ of $[n_i,k_i,d_i]$-codes over $\ff_q$ is said to {\it attain} the
TVZ-bound if, for  $0< \delta\leq 1-(\ell-1)^{-1}$, we have
	$$ \liminf_{i \rightarrow \infty} R(\mathcal{C}_i) \geq 1- \frac{1}{\ell-1}-\delta \qquad \text{and} \qquad \liminf_{i \rightarrow \infty} \delta(\mathcal{C}_i) \geq  \delta. $$ 
	
  An infinite sequence $\{\mathcal{C}_i\}_{i\ge 1}$ of $[n_i,k_i,d_i]$-codes over $\ff_q$ is called \textit{(asymptotically) good} if the information rate $R(\mathcal{C}_i)$ and the relative minimum distance $\delta(\mathcal{C}_i) $ of $\mathcal{C}_i$ satisfy 
 	\[ \liminf_{i \rightarrow \infty} R(\mathcal{C}_i) > 0 \qquad \text{and} \qquad \liminf_{i \rightarrow \infty} \delta(\mathcal{C}_i)>0. \]

 Surprisingly, using the theory of towers of function fields, Garcia and Stichtenoth provided the first explicit construction of a sequence of codes attaining the Tsfasman-Vlăduţ-Zink bound, see \cite{GS1995}.
Since then, the study of structured codes meeting the TVZ-bound has become a fruitful field of research.

At first we recall the definition of certain well studied families of codes. 
If $\C \subseteq \F_{q}^n$  is a code we define the dual code of $\C$ by
$$\C^\perp := \{\textbf{y} \in \F_{q}^n \mid  \langle \textbf{y}, \textbf{x} \rangle=0 \text{ for all } \textbf{x} \in  \C\},$$
where $\langle \cdot, \cdot \rangle$ denotes the Euclidean inner product. A code $\C$ is  self-orthogonal if $\C\subseteq \C^\perp$, and self dual if $\C= \C^\perp$.
A pair $(\C, \cD)$ of linear codes over $\fq$ of length $n$ is called a {\it linear complementary pairs of codes}  (LCP)  if 
$$\C + \cD = \fq^n\quad \text{and} \quad \C \cap \cD = \{ 0 \}.$$ 
In the special case when $\cD = \C^\perp$, $\C$ is a  {\it linear complementary dual  code} (LCD). LCD codes were introduced by J. Massey in 1992, providing a coding solution for the two-user binary additive channel \cite{massey1992}.
In 2015, Bhasin et al. \cite{bhasinetal2015} introduced the term linear complementary pair of codes (LCP of codes). 
    LCPs of codes have been widely studied because of their applications in cryptography, mainly in the protection of symmetric cryptosystem implementations from side channel attacks (SCA) and fault injection attacks (FIA) \cite{bhasinetal2015, carlet2014, carlet-guilley2015}. 

A pair of $[n,k,d]$-linear codes  $(\C, \cD)$ is {\it equivalent} if there is a vector $\textbf{x} \in (\fq^*)^n$ such that 
\[
\cD = \{ \textbf{x} \cdot \textbf{c} \mid \textbf{c} \in \C\},
\]
 where $\cdot$ is the coordinate-wise product of two vectors. We notice the parameters of two equivalent codes are the same. A code $\C$ is iso-dual if it is equivalent to its dual.

 The class of LCD codes has been known to be asymptotically good since they were first introduced by Massey in \cite{massey1992}. In 2004, Sendrier showed that LCD codes attain the Gilbert-Varshamov bound using the hull dimension spectra of linear codes \cite{Sendrier2004}. In 2018, L. Jin and C. Xing \cite{JinXing2018} showed that LCD algebraic geometry codes (AG codes) over $\F_{q}$ (for $q \ge 128$ even) exceed the Gilbert-Varshamov bound. In the same year, Carlet et al., \cite{CMTQP2018}, showed a more general result: for $q >3$, all linear code over $\F_q$ is equivalent to an LCD code.  These results provided a general existential guarantee of sequences of LCD codes attaining the TVZ-bound for $q>3$, but do not give us an explicit and precise way of how to construct such codes. 
 
  The study of asymptotically good AG codes goes beyond LCD codes and covers various classes, such as self-orthogonal, self-dual and block-transitive codes \cite{stich2006transitive, bassa2019self, Chara2020}. For instance, Stichtenoth and Bassa in \cite{bassa2019self} use families of Galois towers of algebraic function fields over nonprime fields $\F_q$ with $q \ge 64$ and $q \neq 125$ to guarantee the existence of infinites families of self-dual codes asymptotically which are better than the asymptotic Gilbert–Varshamov bound. In \cite{stich2006transitive}, Stichtenoth proved the existence of asymptotically good sequences of iso-dual and self-dual AG codes over $\F_q$ (with $q=\ell^2$). In 2025, in \cite{CPTQ2025} proposed a method to construct good sequences of iso-dual AG codes
  
 Recently, Bhowmick, Dalai, and Mesnager \cite{BDM2024} provided some conditions for a pair of AG codes to be an LCP of codes. As in the case of LCD codes, the conditions are not easily verified for an arbitrary curve. In \cite{CMQ25}, Castellanos, Marques, and Quoos constructed many LCPs of AG codes and LCD AG codes arising from Kummer extensions of the rational function field, hyperelliptic and elliptic function fields over finite fields. 
    
Based on the ideas in \cite{CMQ25}, we present explicit constructions of infinite sequences $(\C_i)_{i \ge 1}$ of LCD codes, and sequences $(\C_i',\cD_i')_{i \ge 1}$ of LCPs of codes over a finite field $\F_{q^2}$. Central to our approach is a certain Garcia-Stichtenoth tower of function fields over $\F_{q^2}$, where we describe suitable non-special divisors on each level of the tower. Furthermore, we provide such sequences so that all three  sequences of codes $(\C_i)_{i \ge 1}$, $(\C_i')_{i \ge 1}$ and $(\cD'_i)_{i \ge 1}$ achieve the Tsfasman-Vlăduţ-Zink bound. Remarkably, for sufficiently large $q$, these sequences exceed the classic Gilbert-Varshamov bound  representing a substantial advance in the explicit development of such codes, although their existence is guaranteed. Beyond these results, we also explicitly construct infinite sequences of self-orthogonal codes and self-dual codes attaining the Tsfasman-Vlăduţ-Zink bound from the Garcia-Stichtenoth tower.

This paper is organized as follows. In Section \ref{sec:prelim}, we introduce the notation used in this paper. We also recall some necessary background about function fields, linear and algebraic
geometry codes, some results on LCPs of codes and, finally, towers of function fields with a focus on one tower introduced by  Garcia and Stichtenoth. In Section \ref{sec:LCD_LCP_in_Tm}, we explicitly exhibit non-special divisors of degree $g_m-1$ at each step of the tower; see Proposition \ref{prop:nonspecialTmGeral}. After, we present effective constructions of LCD AG codes and LCPs of AG codes. These constructions provide  asymptotically good sequences of LCD codes over $\F_{q^2}$ attaining the TVZ bound, and sequences $(\C_i)_{i \ge 1}$ and $(\cD_i)_{i \ge 1}$ such that both meet the TVZ-bound and $(\C_i,\cD_i)$ is an LCP of codes for all $i \ge 1$; see Theorems \ref{LCDinTm} and  \ref{LCPinTm}. Finally, in Section \ref{sec:SeqSelfOrtho/Dual_Tm}, we describe self-orthogonal codes over $\F_{q^2}$ from the Garcia-Stichtenoth tower, and obtain sequences of self-orthogonal codes over $\F_{q^2}$ attaining the TVZ-bound for $q \ge 7$; see Theorem \ref{thm:Self-Orthogonal_in_Tm}. In the case $q$ is even, we can describe sequences of self-dual codes over $\F_{q^2}$ better than Gilbert-Varshamov bound for $q \ge 8$; see Theorem \ref{thm:self-dual_in_Tm}.    
\section{Preliminaries}
\label{sec:prelim}

\subsection{Algebraic Function Fields}
We first collect some concepts of function fields, algebraic geometry codes, and results that will be useful in the constructions of codes. We follow the notations as in \cite{STICH2008}.
    
Let $F/\fq$ be an algebraic function field of genus $g$ over $\F_q$. We denote by $\PP_F$ the set of places of $F/\F_q$ and by $\Div(F)$ the divisor group of $F$, that is, the free abelian group generated by the places in $F$.  Given a divisor $D=\sum_{P \in \PP_F} m_PP$, where $m_P=0$ for almost all $P$, the support of $D$ is $supp(D)=\{ P \in \PP_F \mid m_P \neq 0\}$. The group of divisors also admit a partial order $\leq$, we say that $$D=\sum_{P \in \PP_F} m_PP \leq G=\sum_{P \in \PP_F} n_PP$$ if $m_P \leq n_P$ for all $ P \in \PP_F$.
For a function  $z \in F$, $(z)$, $(z)_0$, and $(z)_\infty$ stand for the principal divisor, zero divisor, and pole divisor of $z$, respectively. More specifically, $$(z) =(z)_0-(z)_\infty = \sum_{P \in \PP_F} v_P(x),$$ where $v_P$ denotes the discrete valuation corresponding to the place $P$.

For a divisor $G \in \Div(F)$, the \textit{Riemann-Roch space} $\LL(G)$ associated to $G$ is given by
$$
\LL(G) = \{ z \in F \mid v_P(z) \ge -G \} \cup \{0\},
$$
and its dimension is defined by $\ell(G):=\dim_{\F_q} \LL(G)$.
Let 
$$
\Delta_F := \{ f dx \mid f \in F \}
$$
be the \textit{space of differentials} of $F$, where $v_P(x)$ is coprime with $q$ for some $P \in \PP_F$. For a place $P \in \PP_F$ and $t \in F$ a local parameter at $P$, we define $v_P(f dt)=v_P(f)$. The divisor associated  to a differential $\omega \in \Delta_F$ is  $(\omega)=\sum_{P \in \PP_F} v_P(\omega)P$. A divisor $W \in \Div(F)$ is called \textit{canonical} if $W=(\omega)$ for some differential $\omega \in \Delta_F$.

The Riemann-Roch Theorem tells us that 
$$
\ell(G)=\deg G + 1 - g + \ell(W - G),
$$
for all $W \in \Div(F)$ canonical divisor of $F$.
For a divisor $A \in \Div(F)$, we define the $\F_q$-vector space
$$
\Omega_F(A) := \{ \omega \in \Delta_F \mid (\omega) \ge  A \} \cup \{0\},
$$
and its dimension is denoted by $i(A) := \dim_{\F_q} \Omega_F(A)$.

\begin{definition}
    A divisor $A \in \Div(F)$ is called non-special if $i(A)=0$; otherwise, $A$ is called special. 
\end{definition}

As a consequence of the Riemann-Roch Theorem, a divisor $A \in \Div(F)$ with $\deg(A) > 2g-2$ is non-special, and the least degree of a non-special divisor is $g-1$.
For more on non-special divisors of small degree see the recent survey \cite{BK2025}.

\subsection{Algebraic Geometry Codes}
 Let $F/\fq$ be a function field. Consider $P_1, \dots, P_n \in \PP_F$ pairwise distinct rational places in $F$ and define the divisor $D := P_1 + \dots + P_n$. Let $G \in \Div(F)$ be any other divisor such that $\Supp(G) \cap \Supp(D) = \emptyset$.

\begin{definition}
    Let $\emph{ev}_D$ be the evaluation map defined by
    \begin{align*}
        \emph{ev}_D \colon \LL(G) & \longrightarrow \F_q^n \\
            z \  & \longmapsto (z(P_1), \dots, z(P_n)).
    \end{align*}
    The algebraic geometry (AG) code $\CL(D, G)$ associated with the divisors $D$ and $G$ is defined as
    $$
    \CL(D,G) := \{ \emph{ev}_D(z) \mid z \in \LL(G) \}.
    $$
\end{definition}

The following proposition gives bounds on the  parameters of an AG code.

\begin{proposition}\cite[Theorem 2.2.2]{STICH2008}
\label{parametersAGcodes}
    Let $D$ and $G$ be as above. Then, $\CL(D,G)$ is an $[n,k, d]$-code, where 
    $$
    k = \ell(G) - \ell(G - D) \quad \mbox{and} \quad d \ge n - \deg(G).
    $$
    If, in addition, $2g-2 < \deg(G) < n$, then $k= \ell(G) = \deg(G) + 1 - g$.
\end{proposition}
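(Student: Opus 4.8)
The plan is to analyze the evaluation map $\evd$ directly and read off each parameter from it. First I would check $\evd$ is well defined: since $\Supp(G)\cap\Supp(D)=\emptyset$, every $P_i$ occurs with coefficient $0$ in $G$, so any $z\in\LL(G)$ has $v_{P_i}(z)\ge 0$ and hence a well-defined residue $z(P_i)$ in the residue field of $P_i$, which is $\F_q$ because $P_i$ is rational. The map is plainly $\F_q$-linear, and its codomain is $\F_q^n$, so $\CL(D,G)$ is a linear code of length $n$. Its kernel is $\{z\in\LL(G)\mid v_{P_i}(z)\ge 1 \text{ for all } i\}=\{z\in\LL(G)\mid (z)\ge D-G\}=\LL(G-D)$. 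The rank–nullity theorem then gives
$$k=\dim_{\F_q}\CL(D,G)=\ell(G)-\ell(G-D).$$

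For the distance bound, let $c=\evd(z)$ be a nonzero codeword of Hamming weight $w$. Then $z\neq 0$ and $z$ vanishes at exactly $n-w$ of the places, say $P_{i_1},\dots,P_{i_{n-w}}$. Putting $E:=P_{i_1}+\dots+P_{i_{n-w}}$, we have $v_{P_{i_j}}(z)\ge 1$ for each $j$, so $z\in\LL(G-E)\setminus\{0\}$. A divisor whose Riemann–Roch space is nonzero has non-negative degree (if $0\neq z\in\LL(A)$ then $(z)+A\ge 0$ and $\deg(z)=0$, so $\deg A\ge 0$), whence $0\le\deg(G-E)=\deg(G)-(n-w)$, i.e. $w\ge n-\deg(G)$. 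Since this holds for every nonzero codeword, $d\ge n-\deg(G)$.

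Finally, under the extra hypothesis $2g-2<\deg(G)<n$: from $\deg(G)<n$ we get $\deg(G-D)=\deg(G)-n<0$, hence $\LL(G-D)=\{0\}$ and $\ell(G-D)=0$, so $k=\ell(G)$; and from $\deg(G)>2g-2$ the divisor $G$ is non-special (as recalled just after the definition of non-special divisor), so the Riemann–Roch theorem gives $\ell(G)=\deg(G)+1-g$, completing the proof.

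I do not expect a genuine obstacle here: the statement is foundational and every step is a one-line consequence of the Riemann–Roch material already recalled. The only points demanding a little care are the well-definedness of $\evd$ (the one place the disjoint-support hypothesis is used) and the identification of $\ker\evd$ with $\LL(G-D)$ via the valuation conditions; everything else is bookkeeping with degrees.
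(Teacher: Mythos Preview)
Your proof is correct and is exactly the standard argument; the paper itself does not supply a proof but simply cites \cite[Theorem~2.2.2]{STICH2008}, where the reasoning is the same as yours (kernel of $\evd$ equals $\LL(G-D)$ for the dimension, and the degree inequality $\deg(G-E)\ge 0$ for the distance bound).
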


The dual of an AG code $\CL(D, G)$ is also an AG code, and it is given as follows. 

	\begin{proposition}\cite[Theorem 2.2.2, Proposition 8.1.2]{STICH2008}
		\label{prop:parametersAGcodes}
		Let $\CL(D,G)$  be an $[n,k, d]$-code, where $D := P_1 + \dots + P_n$. If $t$ is an element of $F$ such that $v_{P_i}(t)=1$ for all $t=1, \dots, n$,  then,
		$$
  \CL(D,G)^\perp  = \CL(D, G^\perp) \quad \mbox{with} \quad G^\perp:= D-G+(dt)-(t).
  $$
  Moreover, the code $\CL(D, G^\perp)$ has parameters $[n, n-k, d^\perp]$, where  $d^\perp \ge \deg(G) - 2g - 2$. 
	\end{proposition}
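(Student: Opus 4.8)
The plan is to follow the classical two-step route for computing the dual of a functional AG code: first identify $\CL(D,G)^{\perp}$ with a code built from residues of differentials, and then turn that residue code back into an evaluation code by dividing every differential by the logarithmic differential $dt/t$; this last move is exactly where the hypothesis on $t$ is used.

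First I would introduce the residue map $\omega\mapsto\bigl(\res_{P_1}(\omega),\dots,\res_{P_n}(\omega)\bigr)$ on $\Omega_F(G-D)$ and let $C_\Omega$ denote its image. To get $C_\Omega\subseteq\CL(D,G)^{\perp}$: for $z\in\LL(G)$ and $\omega\in\Omega_F(G-D)$ one has $v_P(z\omega)\ge -v_P(D)$ at every place $P$ (here $\Supp G\cap\Supp D=\emptyset$ is used), so the only poles of $z\omega$ are simple and lie among the $P_i$; since $z$ is regular at $P_i$ we get $\res_{P_i}(z\omega)=z(P_i)\res_{P_i}(\omega)$, and the residue theorem $\sum_P\res_P(z\omega)=0$ becomes precisely $\langle\evd(z),(\res_{P_i}(\omega))_i\rangle=0$. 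For the reverse inclusion I would argue by dimensions: the kernel of the residue map on $\Omega_F(G-D)$ equals $\Omega_F(G)$, because a differential in $\Omega_F(G-D)$ with all residues at the $P_i$ vanishing cannot have a simple pole at any $P_i$ and hence is regular there, which forces $(\omega)\ge G$ — here one uses that each $P_i$ appears in $D$ with coefficient exactly $1$ and in $G$ with coefficient $0$. Thus $\dim C_\Omega=i(G-D)-i(G)$, and Riemann–Roch applied to $G-D$ and to $G$ turns the right-hand side into $\ell(G-D)-\ell(G)+\deg D=n-k=\dim\CL(D,G)^{\perp}$, so $C_\Omega=\CL(D,G)^{\perp}$.

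Next I would bring in $t$. Set $\eta:=dt/t$; since $v_{P_i}(t)=1$ makes $t$ a local parameter at each $P_i$, the differential $\eta$ has a simple pole at $P_i$ with $\res_{P_i}(\eta)=1$, and $(\eta)=(dt)-(t)$. The map $\omega\mapsto\omega/\eta$ is then an $\F_q$-linear isomorphism from $\Omega_F(G-D)$ onto $\LL\bigl(D-G+(dt)-(t)\bigr)=\LL(G^{\perp})$, with inverse multiplication by $\eta$; and writing $f=\omega/\eta$, the inequality $v_{P_i}(\omega)\ge -1=v_{P_i}(\eta)$ shows $f$ is regular at $P_i$ with $\res_{P_i}(\omega)=f(P_i)\res_{P_i}(\eta)=f(P_i)$. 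Hence $C_\Omega=\CL(D,G^{\perp})$, which together with the previous paragraph yields $\CL(D,G)^{\perp}=\CL(D,G^{\perp})$; one also notes $\Supp G^{\perp}\cap\Supp D=\emptyset$, since at each $P_i$ the coefficient of $G^{\perp}$ is $1-0+v_{P_i}(dt)-1=0$ because $v_{P_i}(dt)=0$ for a local parameter. For the parameters, $\CL(D,G^{\perp})$ has length $n$ and, being the dual of a $k$-dimensional code, dimension $n-k$; Proposition~\ref{parametersAGcodes} applied to $G^{\perp}$ gives $d^{\perp}\ge n-\deg G^{\perp}$, and since $(dt)$ is canonical of degree $2g-2$ and $(t)$ is principal of degree $0$ we have $\deg G^{\perp}=n-\deg G+2g-2$, whence $d^{\perp}\ge\deg G-(2g-2)$.

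I expect the main obstacle to be the kernel computation in the middle step: the equivalence of ``$\omega$ has no pole at $P_i$'' with ``$\omega$ has residue zero at $P_i$'' depends on $D$ being a sum of distinct places, and the Riemann–Roch accounting has to be exact so that $\dim C_\Omega$ comes out to be exactly $n-k$. The residue theorem, the identity $\res_P(f\eta)=f(P)\res_P(\eta)$ for $f$ regular at $P$, the behaviour of $dt/t$ at a local parameter, and $\deg(dt)=2g-2$ are standard facts I would cite without reproving.
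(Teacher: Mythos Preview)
Your argument is correct and follows exactly the classical route in Stichtenoth's textbook that the paper cites (the paper itself gives no proof, merely the reference \cite[Theorem 2.2.2, Proposition 8.1.2]{STICH2008}): first identify the dual with the residue code $C_\Omega(D,G)$ via the residue theorem and a Riemann--Roch dimension count, then pass back to an evaluation code by dividing by $\eta=dt/t$. Incidentally, your computation $d^\perp\ge\deg G-(2g-2)$ is the correct bound; the paper's displayed inequality $d^\perp\ge\deg(G)-2g-2$ is a typographical slip.
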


For two divisors $G, H \in \Div(F)$, we define the \textit{greatest common divisor} of $G$ and $H$ by
	$$
 \gcd(G, H) := \sum_{P \in \PP_F} \min\{ v_P(G), v_P(H) \} P,
 $$
and the \textit{least multiple divisor} of $G$ and $H$  by
$$
\lmd(G,H) := \sum_{P \in \PP_F} \max\{ v_P(G), v_P(H) \} P.
$$
We notice we have the following relation of divisors
\[
\gcd(G, H) \le G, \, H \le \lmd(G, H).
\]These divisors satisfy the following straightforward properties
\begin{equation}
    \label{BasicPropsGCDeLMD}
    \LL(G) \cap \LL(H) = \LL(\gcd(G,H)) \quad \mbox{and} \quad \gcd(G,H)+\lmd(G,H) = G+H.
\end{equation}

Let $\CL(D, G)$ and $\CL(D, H)$  be two AG codes. Since $\Supp(G)$ and $\Supp(H)$ are both disjoint from $\Supp(D)$, we have that both $\Supp(\gcd(G, H))$ and $\Supp(\lmd(G, H))$ are also disjoint from $\Supp(D)$. Thus, we can define the codes $\CL(D, \gcd(G, H))$ and $\CL(D, \lmd(G, H))$. 
In particular, we have the following lemma.

\begin{lemma}
    If $\CL(D, G)$ and $\CL(D, H)$ are two algebraic geometry codes, then
    \begin{enumerate}[label=(\roman*)]
        \item $\CL(D, \gcd(G, H)) \subseteq \CL(D, G) \cap \CL(D, H)$.
        \item  $\CL(D, G)+\CL(D, H) \subseteq \CL(D, \lmd(G, H))$.
    \end{enumerate}
\end{lemma}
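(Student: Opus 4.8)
The plan is to prove both inclusions by unwinding the definitions of the evaluation map and the Riemann–Roch spaces involved, using the basic properties of $\gcd$ and $\lmd$ recorded in \eqref{BasicPropsGCDeLMD}.

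For part (i), first I would note that $\gcd(G,H) \le G$ and $\gcd(G,H) \le H$ as divisors, hence $\LL(\gcd(G,H)) \subseteq \LL(G)$ and $\LL(\gcd(G,H)) \subseteq \LL(H)$; in fact by \eqref{BasicPropsGCDeLMD} we have the stronger equality $\LL(\gcd(G,H)) = \LL(G) \cap \LL(H)$. Since $\Supp(\gcd(G,H))$ is disjoint from $\Supp(D)$, the code $\CL(D,\gcd(G,H))$ is well-defined, and applying the evaluation map $\evd$ to the inclusion $\LL(\gcd(G,H)) \subseteq \LL(G)$ gives $\CL(D,\gcd(G,H)) = \evd(\LL(\gcd(G,H))) \subseteq \evd(\LL(G)) = \CL(D,G)$, and similarly $\CL(D,\gcd(G,H)) \subseteq \CL(D,H)$. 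Intersecting the two inclusions yields $\CL(D,\gcd(G,H)) \subseteq \CL(D,G) \cap \CL(D,H)$.

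For part (ii), I would argue dually: from $G \le \lmd(G,H)$ and $H \le \lmd(G,H)$ we get $\LL(G) \subseteq \LL(\lmd(G,H))$ and $\LL(H) \subseteq \LL(\lmd(G,H))$. A codeword of $\CL(D,G)+\CL(D,H)$ has the form $\evd(z_1)+\evd(z_2)$ with $z_1 \in \LL(G)$ and $z_2 \in \LL(H)$; by linearity of $\evd$ this equals $\evd(z_1+z_2)$, and $z_1+z_2 \in \LL(\lmd(G,H))$ since that space contains both summands and is an $\F_q$-vector space. Hence $\evd(z_1)+\evd(z_2) \in \CL(D,\lmd(G,H))$, which proves the inclusion.

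There is essentially no obstacle here: the statement is a formal consequence of the definitions together with the monotonicity of $\LL(\cdot)$ in the divisor and the identities in \eqref{BasicPropsGCDeLMD}. The only point requiring a word of care is that the codes $\CL(D,\gcd(G,H))$ and $\CL(D,\lmd(G,H))$ are actually defined, i.e.\ that the supports of $\gcd(G,H)$ and $\lmd(G,H)$ avoid $\Supp(D)$ — but this was already observed in the paragraph preceding the lemma, since a place lying in neither $\Supp(G)$ nor $\Supp(H)$ (in particular every place of $\Supp(D)$) has valuation $0$ in both the $\min$ and the $\max$. One could also remark that the inclusions in (i) and (ii) are in general strict, which is precisely what makes the LCP and LCD constructions in the sequel nontrivial, but that is not needed for the proof.
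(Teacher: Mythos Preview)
Your proof is correct. The paper actually omits the proof of this lemma entirely, treating it as an immediate consequence of the definitions and the identities in \eqref{BasicPropsGCDeLMD}; your argument is precisely the natural one that the reader is expected to supply.
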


In \cite{BDM2024}, Bhowmick, Dalai, and  Mesnager gave some conditions on the divisors $G$ and $H$ to describe LCPs AG codes. 
\begin{theorem}\cite[Theorem 3.5]{BDM2024} \label{thm:thmlcp3.5}
Let $\CL(D, G)$ and $\CL(D, H)$ be two algebraic geometry codes over a function field $F/\F_q$ of genus $g \neq 0$. Suppose $D$ has degree $n$, and the divisors $G$ and $H$ satisfy
\begin{enumerate}[label=(\roman*)]
\item $2g-2 < \deg(G), \, \deg(H) < n$,
\item $\ell(G)+\ell(H) = n$,
\item $\deg(\gcd(G, H))=g-1$, and 
\item both divisors $\gcd(G,H)$ and $ \lmd(G,H)- D$ are non-special.
\end{enumerate}
Then, $\left( \CL(D,G), \, \CL(D,H) \right)$ is an LCP of codes over $\F_q$.
\end{theorem}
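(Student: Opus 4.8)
The plan is to verify directly the two defining conditions of a linear complementary pair for $\left(\CL(D,G),\,\CL(D,H)\right)$: that $\CL(D,G)\cap\CL(D,H)=\{0\}$ and that $\CL(D,G)+\CL(D,H)=\F_q^n$. Since both codes have length $n$, once the intersection is shown to be trivial it suffices to check that the two dimensions add up to $n$; the full-sum statement then follows from $\dim(\C+\cD)=\dim\C+\dim\cD-\dim(\C\cap\cD)$. So the argument splits into a dimension count and an intersection computation.

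First I would record the dimensions. Hypothesis (i) gives $2g-2<\deg G,\deg H<n$, so by Proposition~\ref{parametersAGcodes} we have $\dim\CL(D,G)=\ell(G)=\deg G+1-g$ and $\dim\CL(D,H)=\ell(H)=\deg H+1-g$; hypothesis (ii) then says these sum to $n$. In particular this forces $\deg G+\deg H=n+2g-2$, a relation I will use below.

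Next I would compute $\CL(D,G)\cap\CL(D,H)$. One inclusion, $\CL(D,\gcd(G,H))\subseteq\CL(D,G)\cap\CL(D,H)$, is the lemma preceding the theorem. For the reverse, take $c$ in the intersection and write $c=\evd(f)=\evd(h)$ with $f\in\LL(G)$ and $h\in\LL(H)$. At every place $P$ one has $v_P(f-h)\ge-\max\{v_P(G),v_P(H)\}=-v_P(\lmd(G,H))$, so $f-h\in\LL(\lmd(G,H))$; and $\evd(f-h)=0$ together with $\Supp(\lmd(G,H))\cap\Supp(D)=\emptyset$ forces $f-h\in\LL(\lmd(G,H)-D)$. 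Now by \eqref{BasicPropsGCDeLMD} and hypotheses (ii), (iii), $\deg(\lmd(G,H)-D)=\deg G+\deg H-\deg(\gcd(G,H))-n=(n+2g-2)-(g-1)-n=g-1$, and since $\lmd(G,H)-D$ is non-special by (iv), Riemann-Roch gives $\ell(\lmd(G,H)-D)=(g-1)+1-g=0$; hence $f=h$. Therefore $f\in\LL(G)\cap\LL(H)=\LL(\gcd(G,H))$ and $c\in\CL(D,\gcd(G,H))$, so $\CL(D,G)\cap\CL(D,H)=\CL(D,\gcd(G,H))$. Finally, by (iii) and (iv) the divisor $\gcd(G,H)$ is non-special of degree $g-1$, so $\ell(\gcd(G,H))=0$, whence $\LL(\gcd(G,H))=\{0\}$ and $\CL(D,\gcd(G,H))=\{0\}$. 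This settles the intersection, and together with the dimension count it yields $\dim\left(\CL(D,G)+\CL(D,H)\right)=n$, so the sum is all of $\F_q^n$ and the pair is an LCP of codes.

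The only step that is not routine bookkeeping is the vanishing of $\ell(\lmd(G,H)-D)$: the divisor $\lmd(G,H)$ has degree $n+g-1>n$, so this cannot be obtained from a negative-degree argument, and here the non-speciality of $\lmd(G,H)-D$ assumed in (iv)---together with the degree normalization forced by (ii) and (iii)---is exactly what is needed. Symmetrically, killing $\LL(\gcd(G,H))$ uses precisely that $\gcd(G,H)$ is non-special of degree $g-1$, i.e. the remaining half of (iv) with (iii). Everything else reduces to the Riemann-Roch theorem and the elementary $\gcd$/$\lmd$ identities in \eqref{BasicPropsGCDeLMD}.
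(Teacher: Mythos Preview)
Your proof is correct. Note, however, that the paper does not supply its own proof of this statement: it is quoted as \cite[Theorem 3.5]{BDM2024} and used as a black box, so there is nothing in the present paper to compare against. Your argument is the natural one---dimension count via (i)--(ii), then trivial intersection by pushing $f-h$ into $\LL(\lmd(G,H)-D)$ and killing it with Riemann--Roch using (iii)--(iv)---and is essentially what the cited reference does.
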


In the case where $G$ and $H$ satisfy the conditions of Theorem \ref{thm:thmlcp3.5} and $H=D-G+(\omega)$ for a canonical divisor $(\omega)$ of $F$ with $v_{P}(\omega)=-1$ and $\res_{P}(\omega)= 1$ for all $P \in \Supp(D)$, we have  $\CL(D, G)^\perp=\CL(D, H)$. That is,  $\CL(D, G)$ is an LCD code over $\F_q$. Many constructions of LCD codes had already been shown in \cite{MTQ2018} exploring these ideas.

\subsection{Tower of Function Fields}
A tower of function fields over $\F_q$ is an infinite sequence $\cF = (F_1, F_2, F_3, \dots )$ of function fields $F_i$ over $\F_q$ such that
\begin{enumerate}[label=(\roman*)]
    \item $F_1 \subseteq F_2 \subseteq F_3 \subseteq \cdots$, and all extension $F_{i+1}/F_i$ are separable of degree $[F_{i+1} : F_i] > 1$;
    \item $\F_q$ is the full constant field of $F_i$, for all $i \ge 1$;
    \item the genus $g(F_i)$ tends to infinity as $i \to \infty$.
\end{enumerate}
  
   For a tower $\cF=(F_1, F_2, F_3 \dots)$ over $\F_q$, the \textit{limit} of $\cF$ is defined as 
$$
\lambda(\cF) = \lim_{i \to \infty} N(F_i)/g(F_i).
$$
This limit always exists (see \cite[Lemma 7.2.3]{STICH2008}) and satisfies 
\[
 0 \le  \lambda(\cF) \le A(q).
\]
Thus, from Drinfeld-Vlăduţ, we have
$$
0 \le \lambda(\cF) \le A(q) \le q^{1/2} -1.
$$
A tower $\cF$ is called \textit{asymptotically good} over $\F_q$ if 
$\lambda(\cF) > 0$. Otherwise, the tower is called \textit{asymptotically bad}. We say that $\cF$ is asymptotically optimal if $\lambda(\cF)=A(q)$.

It is worth to point out the area of towers of function fields has been a very fruitful area of research in the last decades as can be seen in the survey by Beelen, see \cite{B2025}.

\section{A  Tower from Garcia and Stichtenoth}
\label{SubsectionGSTower} In this section, we introduce a tower of function fields with many rational places. This tower was introduced by A. Garcia and H. Stichtenoth  in \cite{GStower1996} and will be used to construct sequences of LCD AG codes and LCPs of AG codes exceeding the Gilbert-Varshamov bound for a large $q$. 

 The recursive tower given in  \cite{GStower1996} is as follows.
Let $\cT=(T_1, T_2, T_3, \dots )$ be the tower of function fields over $\F_{q^2}$ defined by 
\begin{equation}\label{eq:GStower2}
\begin{cases} 
    T_1 := \F_{q^2}(x_1) \text{ and for } m \ge 2, \\
    T_m:= T_{m-1}(x_m)  \mbox{ where }  x_{i+1}^q+x_{i+1} = \frac{ x_i^q }{ x_{i}^{q-1} + 1 }, \mbox{ for } i=1, \dots, m-1. 
\end{cases}
\end{equation}
 In the same paper, they compute the genus $g_m:=g(T_m)$ of the function field $T_m/\F_{q^2}$, which is given by 
\begin{equation}
    \label{genusTm}
    g_m =  \begin{cases}
        (q^{m/2}-1)^2, & \quad \mbox{if} \quad m \equiv 0 \pmod{2}, \\
        (q^{(m+1)/2}-1)(q^{(m-1)/2}-1), & \quad \mbox{if} \quad m \equiv 1 \pmod{2}.
    \end{cases}
\end{equation}
Furthermore, they have also shown that $\cT$ attains the Drinfeld-Vlăduţ bound over $\F_{q^2}$, i.e.,
$$
\lambda(\cT)=q-1.
$$

For our constructions, we need to know the behaviour of the rational places of $T_1$ in the extension $T_m/T_1$ for $m \ge 2$.
Consider the two sets 
$$
\Omega := \{ \alpha \in \F_{q^2} \mid \alpha^q + \alpha = 0 \} \quad \mbox{and} \quad \Omega^* := \Omega \setminus \{ 0 \}.
$$

Let $P_\infty$ be the unique pole of $x_1$ in $T_1=\F_{q^2}(x_1)$, and for $a \in \fqs$,  let $P_a$ be the zero of $x_1-a$ in $T_1$.
The ramified places of $T_m$ lie above either $P_\infty$ or  $P_\alpha$ with $\alpha \in \Omega$. The places $P_\infty$ and $P_\alpha$, with $\alpha \in \Omega^*$, are totally ramified in the extension $T_m/T_1$. We denote $P_\infty^{(m)}$ (resp., $P_{\alpha}^{(m)}$ for $\alpha \in \Omega^*$) the unique place in $T_m$ lying above $P_\infty$ (resp., $P_{\alpha}$ for $\alpha \in \Omega^*$). The other places $P_a \in \PP_{T_1}$ with $a \in \F_{q^2} \setminus \Omega$ split completely in the extension $T_m/T_1$, see \cite[Lemma 3.9]{GStower1996}.

We denote by $Q_k^{(k
)}$ the unique place of $T_k$ which is the only common zero of $x_1, \dots, x_k$ this place lies over  $P_0=Q_1^{(1)}$. 
The following lemma tells us how the decomposition of the zero $P_0^{(1)}$ of $x_1$ behaves in the extension $T_m/T_1$, see Figure \ref{fig:RamificationTm}.

\begin{lemma}\cite[Lemma 3.5]{GStower1996}
\label{behaviourzeroinTm}
    Let $1 \le k < m$ and $Q \in \PP_{T_m}$ be a place having the following properties:
    \begin{enumerate}
        \item $Q$ is a common zero of $x_1, \dots, x_k$;
        \item $Q$ is a zero of $x_{k+1}-\alpha$, for some $\alpha \in \Omega^*$;
        \item $Q$ is a common pole of $x_{k+2}, \dots, x_m$.
    \end{enumerate} 
    Then one has:
    \begin{enumerate}[label=(\roman*)]
        \item If $k \ge (m-1)/2$, then the place $Q$ is unramified in $T_m/T_{m-1}$.
        \item For $k < (m-1)/2$, the place $Q$ is totally ramified in $T_m/T_{2k+1}$; and for $2k+1 \le s \le m$, the restriction of $Q$ to $T_s$ is unramified in the extension $T_s/\F_{q^2}(x_s)$.
        \item If $k < (m-1)/2$, the different exponent $d(Q)$ of $Q$ in the extension $T_m/T_{m-1}$ is given by $d(Q)=2(q-1)$.
    \end{enumerate}
\end{lemma}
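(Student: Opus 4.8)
The plan is to analyse the tower one layer at a time, exploiting that each step $T_{i+1}/T_i$ is the elementary abelian (generalized) Artin--Schreier extension $\wp(x_{i+1})=u_i$, where $\wp(T):=T^q+T$ is additive and $\F_q$-linear with kernel $\Omega$, and
$$u_i:=\frac{x_i^q}{x_i^{q-1}+1}=\frac{1}{\wp(1/x_i)},$$
the last equality because $x_i^{q-1}+1=x_i^q\,\wp(1/x_i)$. I would argue by induction on $m$, the inductive step being a ramification analysis of the top layer $T_m/T_{m-1}$ at $Q$ via Hasse's criterion for such equations: writing $Q^{(s)}:=Q\cap T_s$, the place $Q^{(m-1)}$ is unramified in $T_m/T_{m-1}$ precisely when $u_{m-1}$ is congruent modulo $\wp(T_{m-1})$ to a function regular at $Q^{(m-1)}$, and otherwise $Q^{(m-1)}$ is totally ramified with different exponent $(q-1)(M+1)$, where $M\ge 1$ is prime to $p$ and is the least pole order at $Q^{(m-1)}$ of $u_{m-1}-\wp(z)$ over $z\in T_{m-1}$.

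First I would record the valuations forced by $(1)$--$(3)$. From $\wp(x_{i+1})=u_i$ and $(1)$ one gets $v_Q(x_{i+1})=q\,v_Q(x_i)$ for $i<k$, hence $v_Q(x_i)=q^{i-1}v_Q(x_1)$ for $i\le k$; since $\alpha\in\Omega$ one has $\wp(x_{k+1})=\wp(x_{k+1}-\alpha)$, so $(2)$ gives $v_Q(x_{k+1}-\alpha)=q^{k}v_Q(x_1)$; and because $\alpha\in\Omega^*$ the denominator $x_{k+1}^{q-1}+1=\wp(x_{k+1})/x_{k+1}$ vanishes at $Q$, which makes $u_{k+1}$ acquire a pole and, continuing the recursion, $v_Q(x_i)=-q^{2k+1-i}v_Q(x_1)$ for $k+2\le i\le m$. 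Integrality of $v_Q(x_m)$ already forces $q^{m-2k-1}\mid v_Q(x_1)$ when $k<(m-1)/2$ — so ramification is unavoidable there — whereas $v_Q(x_1)=1$ is admissible when $k\ge(m-1)/2$.

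The heart of the matter is the polar part of $u_{m-1}$ at $Q^{(m-1)}$. If $v_{Q^{(m-1)}}(x_{m-1})>0$ it is regular there; if $v_{Q^{(m-1)}}(x_{m-1})<0$ then $u_{m-1}=x_{m-1}-x_{m-1}/(x_{m-1}^{q-1}+1)$ equals $x_{m-1}$ plus a function regular at $Q^{(m-1)}$, so its polar part is that of $x_{m-1}$; and if $x_{m-1}\equiv\alpha\in\Omega^*$ at $Q^{(m-1)}$, a short computation (factor $(\alpha+t)^{q+1}$ with $t=x_{m-1}-\alpha$, using $\alpha^q=-\alpha$ so $\alpha^2\in\F_q$, together with $1/u_k=\wp(1/x_k)$ and $t^{-1}\equiv u_k^{-1}\bmod\mathcal{O}_{Q^{(m-1)}}$) gives $u_{m-1}\equiv\wp(-\alpha^2/x_k)$ modulo $\mathcal{O}_{Q^{(m-1)}}$. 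Now if $k<(m-1)/2$, then $m-1\ge k+2$ and $Q^{(m-1)}$ satisfies $(1)$--$(3)$ with $m$ replaced by $m-1$; by induction $Q^{(m-1)}$ is unramified over $\F_{q^2}(x_{m-1})$, hence $v_{Q^{(m-1)}}(x_{m-1})=-1$, so $u_{m-1}$ has a simple pole at $Q^{(m-1)}$ and Hasse gives total ramification of $T_m/T_{m-1}$ at $Q$ with $d(Q)=2(q-1)$ — this yields $(iii)$, propagates total ramification from $T_{m-1}/T_{2k+1}$ to $T_m/T_{2k+1}$, and the unramifiedness of $T_m/\F_{q^2}(x_m)$ at $Q$ follows from the reflected substitution $w_i=1/x_i$, which turns $\wp(x_{i+1})=u_i$ into $\wp(w_i)=1/\wp(x_{i+1})$, a function vanishing at $x_{i+1}=\infty$, so $(ii)$. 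If instead $k\ge(m-1)/2$, then the polar part of $u_{m-1}$ at $Q^{(m-1)}$ is $0$, or $\wp(-\alpha^2/x_k)$, or the polar part of $x_{m-1}$ whose pole order is then divisible by $q$ (as $2k+1\ge m$); in every case it is trivial modulo $\wp(T_{m-1})+\mathcal{O}_{Q^{(m-1)}}$, so $Q$ is unramified in $T_m/T_{m-1}$, which is $(i)$.

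The step I expect to be hardest is the wild bookkeeping in case $(i)$ when $x_{m-1}$ has a pole at $Q^{(m-1)}$ (and, dually, the assertion that $T_s/\F_{q^2}(x_s)$ is unramified at $Q^{(s)}$ for $2k+1\le s\le m$): one must check that the $q$-divisible polar part of $u_{m-1}$ there is \emph{removed entirely} modulo $\wp(T_{m-1})$ and not merely reduced to a simple pole — the latter would, incorrectly, make $Q$ ramified. This requires the precise local picture of $T_{m-1}$ at $Q^{(m-1)}$ (a uniformiser together with the valuations $v_{Q^{(m-1)}}(x_j)$), which is exactly what the induction supplies; correspondingly, the base cases $m\in\{k+1,k+2,2k+2\}$ (and the small value $k=1$) have to be handled directly by the same Artin--Schreier computation, using the splitting/ramification behaviour of the rational places of $\F_{q^2}(x_1)$ in $T_m/T_1$ recalled just before the lemma.
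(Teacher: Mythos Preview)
The paper itself does not prove this lemma; it is quoted from \cite[Lemma~3.5]{GStower1996} and used only as input for the later constructions, so there is no in-paper argument to compare against. Your plan --- layer-by-layer Artin--Schreier analysis of $\wp(x_{i+1})=u_i$, the identity $u_i=1/\wp(1/x_i)$, and the reflected equations $\wp(1/x_i)=1/\wp(x_{i+1})$ for the ``downward'' direction --- is exactly the strategy of the original Garcia--Stichtenoth proof, and the valuation formulas and the case split on $v_{Q^{(m-1)}}(x_{m-1})$ that you write down agree with theirs.

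One caution on the step you already flag as hardest. In case~$(i)$ with $x_{m-1}$ a pole at $Q^{(m-1)}$, the bare fact that the pole order of $u_{m-1}\equiv x_{m-1}$ is divisible by $q$ does not by itself force unramifiedness: what is needed is that the polar part actually lies in $\wp(T_{m-1})+\mathcal{O}_{Q^{(m-1)}}$, not merely that its order is $p$-divisible. The cleanest way to close this --- and it is implicit in your sketch for~$(ii)$ --- is to use the reflected equations to show directly that $Q$ is unramified in $T_m/\F_{q^2}(x_m)$ whenever $x_m$ has a pole at $Q$, and then to read off $e(Q\mid Q^{(m-1)})$ by multiplicativity of ramification indices in the diamond over $\F_{q^2}(x_{m-1})$ formed by $T_{m-1}$ and $\F_{q^2}(x_{m-1},x_m)$. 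This handles~$(i)$ and~$(ii)$ in one stroke and avoids the delicate iterated Artin--Schreier reduction at $Q^{(m-1)}$.
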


\input{ramificationdiagram}

In \cite{PELLIKAAN1998}, Pellikaan, Stichtenoth and Torres showed that the Weierstrass semigroup $H(P_\infty^{(m)})$ at the unique pole of $x_1$ is given by 
$$\begin{cases}
H(P_\infty^{(1)})=\mathbb{N}_0, \text{ and }\\
 H(P_\infty^{(m+1)})=q \cdot H(P_\infty^{(m)}) \cup \{n \in \mathbb{N}_0 \mid n \ge c_m \},
\end{cases}
$$
where 
\[
c_m :=q^m - q^{ \ceil*{m/2} } = 
\begin{cases}
    q^m - q^{m/2}, & \mbox{ if } m \equiv 0 \pmod{2}. \\
    q^m - q^{(m+1)/2}, & \mbox{ if } m \equiv 1 \pmod{2}.
\end{cases}
\]
One of the ingredients used by the authors in \cite{PELLIKAAN1998} to obtain $H(P_\infty^{(m)})$ was introducing a function $\pi_j$ and showing that it satisfies specific desired properties. Namely,
for $1 \le j \le m$, let 
\[
\pi_j := \prod_{i=1}^j (x_i^{q-1}+1).
\]

The function $\pi_j \in T_m$ satisfies the following properties.
\begin{theorem} \cite[Lemmas 3.4 and 3.9]{PELLIKAAN1998} \label{thm:PSTresults}
    For $1 \le j \le m$, 
    \begin{enumerate}
   \item   The principal divisor of $\pi_j$ over $T_m$ is given by 
    \[
    (\pi_j)^{T_m} = (\pi_j)_0^{T_m}-( q^m-q^{m-j})P_\infty^{(m)},
    \]
    where $\Supp( (\pi_j)_0^{T_m} ) = \{P \in \PP_{T_m} \mid P \mbox{ is a zero of } x_i^{q-1}+1 \mbox{ for some } 1 \le i \le j  \}$.
    \item  Define $A^{(1)} := 0$ and, for $m \ge 2$, 
    $
    A^{(m)} := \sum_{P \in \mathcal{Z}_{\floor*{m/2}}^{(m)}} P. 
    $
    Then 
    \begin{enumerate}[label=(\roman*)]
        \item $\deg(c_mP_\infty^{(m)}-A^{(m)})=g_m$, and
        \item $\ell( c_mP_\infty^{(m)}-A^{(m)} ) = 1$.
    \end{enumerate}
    \end{enumerate}
\end{theorem}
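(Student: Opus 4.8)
The plan is to locate every pole and zero of $\pi_j$ in $T_m$ directly from the defining relations. Since $P_\infty$ is totally ramified in $T_m/T_1$ with ramification index $q^{m-1}$, we have $v_{P_\infty^{(m)}}(x_1)=-q^{m-1}$, and inducting on $i$ through the relation $x_{i+1}^q+x_{i+1}=x_i^q/(x_i^{q-1}+1)$ (at $P_\infty^{(m)}$ the right-hand side has valuation $q\,v(x_i)-(q-1)v(x_i)=v(x_i)$, so $v(x_{i+1})=v(x_i)/q$) gives $v_{P_\infty^{(m)}}(x_i)=-q^{m-i}$ for $1\le i\le m$; in particular $P_\infty^{(m)}$ is rational. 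Thus $v_{P_\infty^{(m)}}(x_i^{q-1}+1)=-(q-1)q^{m-i}$, and the sum over $1\le i\le j$ telescopes to $v_{P_\infty^{(m)}}(\pi_j)=-(q^m-q^{m-j})$. For the remaining places I would use the divisor identity $(x_{i+1})+(x_{i+1}^{q-1}+1)+(x_i^{q-1}+1)=q(x_i)$, which shows that, away from $P_\infty^{(m)}$, a pole of $x_{i+1}$ is either a pole of $x_i$ or a zero of $x_i^{q-1}+1$; inductively the poles of $x_i$ in $T_m$ are $P_\infty^{(m)}$ together with the zeros of $x_1^{q-1}+1,\dots,x_{i-1}^{q-1}+1$. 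Finally, at a place $P\ne P_\infty^{(m)}$, if $l\le j$ is the least index with $v_P(x_l^{q-1}+1)>0$, the same identity forces $v_P(x_{l+s})=-v_P(x_l^{q-1}+1)/q^s$ for $s\ge1$, and an analogous summation then gives $v_P(\pi_j)=q^{-(j-l)}v_P(x_l^{q-1}+1)>0$, while $v_P(\pi_j)=0$ when no factor vanishes at $P$. This yields both that $P_\infty^{(m)}$ is the only pole of $\pi_j$ and the asserted description of $\Supp((\pi_j)_0^{T_m})$.

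\textbf{Part (2)(i): the degree.} Since $P_\infty^{(m)}$ is rational, $\deg(c_mP_\infty^{(m)}-A^{(m)})=c_m-\deg A^{(m)}$, so (i) is equivalent to $\deg A^{(m)}=c_m-g_m$; inserting the formula for $c_m$ and \eqref{genusTm} and separating on the parity of $m$ gives $c_m-g_m=q^{\floor{m/2}}-1$. It then remains to check that the places summed in $A^{(m)}$ --- those over the zero $P_0$ of $x_1$ satisfying the hypotheses of Lemma \ref{behaviourzeroinTm} with $k=\floor{m/2}$ --- are all rational and exactly $q^{\floor{m/2}}-1$ in number; this is a bookkeeping exercise, tracking the decomposition of $P_0$ through the tower via Lemma \ref{behaviourzeroinTm} and Figure \ref{fig:RamificationTm}, using that $\floor{m/2}\ge(m-1)/2$ pins down the behaviour near the top of the tower.

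\textbf{Part (2)(ii): the dimension.} One inequality is free: Riemann's inequality together with (i) gives $\ell(c_mP_\infty^{(m)}-A^{(m)})\ge\deg(c_mP_\infty^{(m)}-A^{(m)})+1-g_m=1$. The content is the reverse bound $\ell(c_mP_\infty^{(m)}-A^{(m)})\le1$, i.e.\ that $c_mP_\infty^{(m)}-A^{(m)}$ is non-special. I would prove this by induction on $m$, the case $m=1$ being trivial since there the divisor is $0$. Given $f\ne0$ in the space, write $f=\sum_{r=0}^{q-1}a_rx_m^r$ with $a_r\in T_{m-1}$; since $v_{P_\infty^{(m)}}(a_r)\in q\Z$ and $v_{P_\infty^{(m)}}(x_m)=-1$, the valuations $v_{P_\infty^{(m)}}(a_rx_m^r)$ are pairwise incongruent mod $q$, so the bound $-v_{P_\infty^{(m)}}(f)\le c_m$ passes to each summand. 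Combining this with $c_m=q\,c_{m-1}$ (for $m$ odd; for $m$ even there is an explicit extra term to absorb) and with the fact from Part (1) that the points of $A^{(m)}$ lie over zeros of the $x_i^{q-1}+1$, one confines the coefficients $a_r$ to an $\LL$-space at level $m-1$ that is one-dimensional by the inductive hypothesis; hence $f$ is determined up to a constant.

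\textbf{Main obstacle.} Parts (1) and (2)(i) are careful but essentially mechanical once the ramification of the tower (Lemma \ref{behaviourzeroinTm}) is available. The real difficulty is the non-speciality in (2)(ii): making the descent to level $m-1$ precise requires controlling \emph{simultaneously} the pole order at $P_\infty^{(m)}$ and the prescribed vanishing along $A^{(m)}$ under a degree-$q$ extension, and separately handling the even-$m$ case, where $c_m\ne q\,c_{m-1}$ and $A^{(m)}$ sits one level higher than $A^{(m-1)}$. This descent is essentially the technical core of the Pellikaan--Stichtenoth--Torres determination of the Weierstrass semigroup $H(P_\infty^{(m)})$.
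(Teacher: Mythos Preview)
First, note that the paper does not prove this theorem; it is quoted from \cite{PELLIKAAN1998}, so there is no in-paper argument to compare against. Your Part~(1) is essentially the PST computation and is sound.

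The genuine gap is your identification of $A^{(m)}$ in Part~(2). You take its support to be the places over $P_0$ satisfying the hypotheses of Lemma~\ref{behaviourzeroinTm} with the single value $k=\floor*{m/2}$. That is incorrect. Although the paper omits an explicit definition of $\mathcal{Z}_j^{(m)}$, the intended meaning is pinned down by Part~(1) of the same theorem and by equation~\eqref{eq:A=sum A_{a, k}} in the proof of Proposition~\ref{prop:nonspecialTmGeral}: one has $\mathcal{Z}_{\floor*{m/2}}^{(m)}=\Supp\bigl((\pi_{\floor*{m/2}})_0^{T_m}\bigr)$, so $A^{(m)}=\sum_{k=0}^{\floor*{m/2}-1}A_k^{(m)}$ is the reduced sum over all zeros of $x_i^{q-1}+1$ for $1\le i\le\floor*{m/2}$. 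In particular $A^{(m)}$ contains $A_0^{(m)}=\sum_{\alpha\in\Omega^*}P_\alpha^{(m)}$, and these $q-1$ places lie over the totally ramified $P_\alpha$ with $\alpha\ne0$, \emph{not} over $P_0$. The correct degree is the telescoping sum $\sum_{k=0}^{\floor*{m/2}-1}(q^{k+1}-q^k)=q^{\floor*{m/2}}-1$ (Lemma~\ref{lemma-degA_k}), which does equal your target $c_m-g_m$, but the count you outline (a single $k$) would not produce it. The same misreading undermines your inductive plan for (2)(ii): the descent from $T_m$ to $T_{m-1}$ must control vanishing along all of the $X_k^{(m)}$, $0\le k\le\floor*{m/2}-1$, simultaneously, and it is the relation between $A^{(m)}$ and the conorm of $A^{(m-1)}$ (governed by whether $\floor*{m/2}=\floor*{(m-1)/2}$ or $\floor*{(m-1)/2}+1$) that drives the parity dichotomy you rightly anticipate.
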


\section{Non-special Divisors of Degree \texorpdfstring{$g_m-1$}{gm-1} on \texorpdfstring{$T_m$}{Tm}} \label{sec:NonSpecial_Tm}

In this section we are going determine non-special divisors of small degree in the tower $\cT$ given by Equation \ref{eq:GStower2}. This divisors will play a fundamental role on the construction of  LCP of codes and LCD codes using Theorem  \ref{thm:thmlcp3.5}. These ideas where already explored in the paper \cite{CMQ25}.

Consider the tower $\cT$ given by Equation \ref{eq:GStower2} and 
$m \ge 2$.

At first we describe non-special divisors of degree $g_m-1$ at each level $T_m$ of the tower $\cT$.
For $0  \le k \le m-1$,  define
$$
X_k^{(m)} := \{  Q \in \PP_{T_m} \mid Q \mbox{ is a zero of } x_{k+1}^{q-1}+1 \} 
$$
and 
$$
 A_{k}^{(m)} := \sum_{Q \in X_{k}^{(m)} } Q.
$$
For $\alpha \in \Omega^*$ and $0  \le k \le m-1$, we set
	\begin{equation}
	    \label{eq:DefXka}
        	X_{k,\alpha}^{(m)} := \{ Q \in \PP_{T_m} \mid Q \mbox{ is a zero of } x_{k+1}-\alpha \} 
	\end{equation} 
	and
	\begin{equation}
	   \label{eq:DefAka}
         A_{k,\alpha}^{(m)} := \sum_{Q \in X_{k,\alpha}^{(m)} } Q. 
	\end{equation}
Note that 
\[
X_k^{(m)} = \bigcup_{\alpha \in \Omega^*} X_{k, \alpha}^{(m)} \quad \mbox{and} \quad A_k^{(m)} = \sum_{\alpha \in \Omega^*} A_{k, \alpha}^{(m)}.
\]

\begin{lemma}
    \label{lemma-degA_k}
    Let  $0 \le k \le  \floor*{\frac{m-2}{2}}$. Then the degree of the divisor $A_{k}^{(m)} $ is given by
    $$
    \deg(A_{k}^{(m)})=q^{k+1}-q^k.
    $$
\end{lemma}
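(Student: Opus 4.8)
\emph{Strategy.} I would compute $\deg A_k^{(m)}=\sum_{\alpha\in\Omega^{*}}\deg A_{k,\alpha}^{(m)}$ one residue at a time, by following the unique common zero $Q_k^{(k)}$ of $x_1,\dots,x_k$ up the tower. The basic local fact is: if $Q$ is a place of some $T_j$ with $j\ge k+1$ and $v_Q(x_{k+1}-\alpha)>0$ for some $\alpha\in\Omega^{*}$, then $Q$ is automatically a common zero of $x_1,\dots,x_k$ and a common pole of $x_{k+2},\dots,x_j$. This is read off the defining equations $x_{i+1}^{q}+x_{i+1}=x_i^{q}/(x_i^{q-1}+1)$: if $x_{i+1}(Q)\in\Omega$ then the right–hand side has a zero at $Q$, and a pole of $x_i$ — or a value of $x_i$ lying in $\Omega^{*}$ — would force that rational function to have non–positive valuation, so $v_Q(x_i)>0$; iterating downward from $i=k$ gives $v_Q(x_1),\dots,v_Q(x_k)>0$, and dually $x_i(Q)\in\Omega^{*}$ forces $x_{i+1}$ to have a pole, which then propagates upward. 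In particular every place of $X_{k,\alpha}^{(m)}$ lies over $Q_k^{(k)}$ in $T_k$. For $k=0$ this already settles the claim: $X_{0,\alpha}^{(m)}=\{P_\alpha^{(m)}\}$ is a single place, rational because $P_\alpha^{(1)}$ is totally ramified in $T_m/T_1$, so $\deg A_0^{(m)}=|\Omega^{*}|=q-1=q^{1}-q^{0}$.

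\emph{Splitting at $T_{k+1}$ (now $k\ge 1$).} First, $Q_i^{(i)}$ is rational for every $i$, by induction: $Q_1^{(1)}$ (the zero of $x_1$) is rational, and if $Q_i^{(i)}$ is rational then, since $v_{Q_i^{(i)}}\!\big(x_i^{q}/(x_i^{q-1}+1)\big)=q\,v_{Q_i^{(i)}}(x_i)>0$, the Artin–Schreier step $T_{i+1}/T_i$ splits completely at $Q_i^{(i)}$ (the roots of $Y^{q}+Y$ form the set $\Omega\subseteq\F_{q^2}$ and are simple, so each lifts by Hensel) into $q$ rational places whose $x_{i+1}$–residues exhaust $\Omega$, one of which — the one with residue $0$ — is $Q_{i+1}^{(i+1)}$. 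Taking $i=k$, and combining with the local fact, $X_{k,\alpha}^{(k+1)}$ is a single rational place $S_{k,\alpha}$ (the one over $Q_k^{(k)}$ with $x_{k+1}\equiv\alpha$).

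\emph{The two ramification regimes, and the count.} For $k+1\le s\le 2k+1$ any place of $T_s$ above $S_{k,\alpha}$ still satisfies the hypotheses of Lemma~\ref{behaviourzeroinTm} (by the local fact applied in $T_s$), and since $k\ge(s-1)/2$, part~(i) there gives that it is unramified over $T_{s-1}$; hence $S_{k,\alpha}$ is unramified in $T_{2k+1}/T_{k+1}$. As $X_{k,\alpha}^{(2k+1)}$ is precisely the fibre over $S_{k,\alpha}$ (a zero of $x_{k+1}-\alpha$ in $T_{2k+1}$ restricts to $S_{k,\alpha}$, and every place over $S_{k,\alpha}$ is such a zero), unramifiedness together with $\deg S_{k,\alpha}=1$ gives $\deg A_{k,\alpha}^{(2k+1)}=[T_{2k+1}:T_{k+1}]=q^{k}$. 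Finally, the hypothesis $k\le\lfloor(m-2)/2\rfloor$ yields $m\ge 2k+2$ and $k<(m-1)/2$, so Lemma~\ref{behaviourzeroinTm}(ii) makes every $Q\in X_{k,\alpha}^{(m)}$ totally ramified over $T_{2k+1}$; then $Q\mapsto Q|_{T_{2k+1}}$ is a degree–preserving bijection onto $X_{k,\alpha}^{(2k+1)}$, so $\deg A_{k,\alpha}^{(m)}=q^{k}$ and, summing over the $q-1$ choices of $\alpha$, $\deg A_k^{(m)}=(q-1)q^{k}=q^{k+1}-q^{k}$.

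\emph{Main difficulty.} One cannot shortcut this via the principal divisor of $\pi_{k+1}$ in Theorem~\ref{thm:PSTresults}(1): although the zero sets of the $x_i^{q-1}+1$ are pairwise disjoint, each is cancelled by poles coming from the later factors, so $(\pi_{k+1})_0^{T_m}$ is not the sum of the individual zero divisors; and in any case the zeros of $x_{k+1}-\alpha$ are not simple — already in $T_{k+1}$ the single zero $S_{k,\alpha}$ has multiplicity $q^{k}$ — so counting with multiplicity gives the wrong (larger) number. The real work is therefore the ramification bookkeeping: checking that the whole splitting of $Q_k^{(k)}$ happens at the single step $T_{k+1}/T_k$, that nothing further happens up to $T_{2k+1}$, and that the passage from $T_{2k+1}$ to $T_m$ is pure ramification and so leaves degrees unchanged — which is exactly the content of Lemma~\ref{behaviourzeroinTm}, the only delicate point being to align its index thresholds ($s\le 2k+1$ in (i), $k<(m-1)/2$ in (ii)) with the hypothesis $k\le\lfloor(m-2)/2\rfloor$.
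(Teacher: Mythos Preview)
Your argument is correct. The paper's proof, by contrast, handles the case $k\ge 1$ in a single line by citing \cite[Lemma~3.6]{GStower1996} for the identity $\deg A_{k,\alpha}^{(m)}=q^{k}$; the case $k=0$ is treated exactly as you do. What you have written for $k\ge 1$ is essentially a self-contained reconstruction of that cited lemma, using only the ramification data already recorded in the paper as Lemma~\ref{behaviourzeroinTm} (which is \cite[Lemma~3.5]{GStower1996}). The underlying mathematics is the same --- both routes amount to following $Q_k^{(k)}$ up the tower, observing complete splitting at the step $T_{k+1}/T_k$, unramifiedness in $T_{2k+1}/T_{k+1}$, and total ramification in $T_m/T_{2k+1}$ --- but your write-up stands on its own where the paper's relies on an external reference. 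Your closing remark on why the principal divisor of $\pi_{k+1}$ cannot be used as a shortcut (the zeros of $x_{k+1}-\alpha$ are not simple) is correct and worth making, though it is commentary rather than part of the proof.
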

\begin{proof}
     For $k=0$, we have that $A_0^{(m)}= \sum_{\alpha \in \Omega^*} P_\alpha^{(m)}$ with $\deg(P_\alpha^{(m)})=1$ for all $\alpha \in \Omega^*$, so $\deg(A_0^{(m)})=q-1$. Now, let $1 \le k \le\floor*{(m-2)/2}$. From \cite[Lemma 3.6]{GStower1996}, we have $\deg(A_{k, \alpha}^{(m)}) = q^k$ for any $\alpha \in \Omega^*$. Since $A_k^{(m)}=\sum_{ \alpha \in \Omega^* } A_{k, \alpha}^{(m)}$, we obtain
         \[
         \deg(A_k^{(m)}) = \sum_{ \alpha \in \Omega^* } \deg(A_{k, \alpha}^{(m)}) = q^k(q-1).
         \]
\end{proof}

\begin{proposition}
        \label{prop:nonspecialTmGeral}
		The divisors
		\begin{equation}
		    \label{eq:nonspecialdivTm1}
            \sum_{\alpha \in \Omega^*} (q^{\ceil*{m/2}}-1)P_{\alpha}^{(m)} +   \sum_{k=1}^{\floor*{m/2}-1} (q^{\ceil*{m/2}}-1) A_k^{(m)}- P_\infty^{(m)}
		\end{equation}
	and
	\begin{equation}
		    \label{eq:nonspecialdivTm2}
	\sum_{\alpha \in \Omega^*} (q^{m-1}-1)P_\alpha^{(m)} - \sum_{k=1}^{ \floor*{(m-2)/2} } A_{k}^{(m)} + (q^{m-1} - q^{\ceil*{m/2}}-1)P_\infty^{(m)}
	\end{equation}
	of $T_m$ are both non-special of degree $g_m-1$.
\end{proposition}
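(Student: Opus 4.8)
The plan is to verify the degrees first and then the non-speciality, the latter being equivalent to $\ell(\,\cdot\,)=0$ since a divisor of degree $g_m-1$ satisfies $\ell=i$ by Riemann--Roch. Write $D_1$ and $D_2$ for the divisors in \eqref{eq:nonspecialdivTm1} and \eqref{eq:nonspecialdivTm2}. In the notation of Theorem~\ref{thm:PSTresults}(2) one has $A^{(m)}=\sum_{k=0}^{\lfloor m/2\rfloor-1}A_k^{(m)}$ (it is supported on $\bigcup_{k}X_k^{(m)}$ and has degree $q^{\lfloor m/2\rfloor}-1$), and since $A_0^{(m)}=\sum_{\alpha\in\Omega^*}P_\alpha^{(m)}$ the first divisor is simply $D_1=(q^{\lceil m/2\rceil}-1)A^{(m)}-P_\infty^{(m)}$. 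Using Lemma~\ref{lemma-degA_k} and the telescoping identity $\sum_{k=1}^{r}(q^{k+1}-q^k)=q^{r+1}-q$ I would get $\deg D_1=(q^{\lceil m/2\rceil}-1)(q^{\lfloor m/2\rfloor}-1)-1$ and, after cancellation, $\deg D_2=q^m-q^{\lceil m/2\rceil}-q^{\lfloor m/2\rfloor}$; by the genus formula \eqref{genusTm} (checking $m$ even and $m$ odd separately) both equal $g_m-1$. So it remains to show $\ell(D_1)=\ell(D_2)=0$.

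The engine for both non-speciality statements is the divisor computation
\[
(\pi_{\lfloor m/2\rfloor})^{T_m}=q^{\lceil m/2\rceil}A^{(m)}-c_mP_\infty^{(m)},
\]
i.e.\ each place of $\Supp(A^{(m)})$ is a zero of $\pi_{\lfloor m/2\rfloor}$ of multiplicity exactly $q^{\lceil m/2\rceil}$. The pole part is $c_mP_\infty^{(m)}$ by Theorem~\ref{thm:PSTresults}(1), and since $\deg(\pi_{\lfloor m/2\rfloor})_0=q^m-q^{\lceil m/2\rceil}=q^{\lceil m/2\rceil}\deg A^{(m)}$ it is enough to control these multiplicities: at $P_\alpha^{(m)}$ one checks it directly, the factor $x_1^{q-1}+1$ contributing a zero of order $q^{m-1}$ (total ramification of $P_\alpha$ in $T_m/T_1$) and the factors $x_i^{q-1}+1$, $2\le i\le\lfloor m/2\rfloor$, poles of orders $(q-1)q^{m-i}$ with sum $q^{m-1}-q^{\lceil m/2\rceil}$; at the places of $X_k^{(m)}$ with $k\ge1$ the same cancellation must be carried out using Lemma~\ref{behaviourzeroinTm} and \cite{GStower1996,PELLIKAAN1998}. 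This is the only substantial point of the proof; granting it, Theorem~\ref{thm:PSTresults}(2)(b) gives $\LL(c_mP_\infty^{(m)}-A^{(m)})=\langle\pi_{\lfloor m/2\rfloor}\rangle$, and in particular every nonzero element of this line has pole divisor exactly $c_mP_\infty^{(m)}$. I would also record the elementary companion identity $(x_1^{q-1}+1)^{T_m}=q^{m-1}A_0^{(m)}-q^{m-1}(q-1)P_\infty^{(m)}$, again from total ramification of the $P_\alpha$ and Theorem~\ref{thm:PSTresults}(1).

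For $D_1$: take $f\in\LL(D_1)\setminus\{0\}$. Then $v_Q(f)\ge-(q^{\lceil m/2\rceil}-1)$ for $Q\in\Supp(A^{(m)})$ and $v_{P_\infty^{(m)}}(f)\ge0$, so by the key identity $v_Q(f\pi_{\lfloor m/2\rfloor})\ge1$ on $\Supp(A^{(m)})$, $v_{P_\infty^{(m)}}(f\pi_{\lfloor m/2\rfloor})\ge-c_m$, and $f\pi_{\lfloor m/2\rfloor}$ is regular off $\Supp(A^{(m)})\cup\{P_\infty^{(m)}\}$; hence $f\pi_{\lfloor m/2\rfloor}\in\LL(c_mP_\infty^{(m)}-A^{(m)})=\langle\pi_{\lfloor m/2\rfloor}\rangle$, so $f$ is a constant. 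But then $f\in\LL(D_1)$ would force $D_1\ge0$, which is impossible since $v_{P_\infty^{(m)}}(D_1)=-1$; hence $\ell(D_1)=0$.

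For $D_2$: take $f\in\LL(D_2)\setminus\{0\}$ and set $g:=f\cdot(x_1^{q-1}+1)$. On $\Supp(A_0^{(m)})$ the zero of $x_1^{q-1}+1$ of order $q^{m-1}$ absorbs the pole of $f$ of order $\le q^{m-1}-1$, so $v_Q(g)\ge1$; on the places of $A_k^{(m)}$ with $1\le k\le\lfloor m/2\rfloor-1$ the function $x_1^{q-1}+1$ is a unit (there $x_1=0$), so $v_Q(g)=v_Q(f)\ge1$; at $P_\infty^{(m)}$ one gets $v_{P_\infty^{(m)}}(g)\ge-(q^{m-1}-q^{\lceil m/2\rceil}-1)-q^{m-1}(q-1)=-(c_m-1)$; and $g$ is regular elsewhere. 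Thus $g\in\LL\bigl((c_m-1)P_\infty^{(m)}-A^{(m)}\bigr)\subseteq\LL(c_mP_\infty^{(m)}-A^{(m)})=\langle\pi_{\lfloor m/2\rfloor}\rangle$, and since $\pi_{\lfloor m/2\rfloor}$ has pole order exactly $c_m$ at $P_\infty^{(m)}$, the only element of this line with pole order $\le c_m-1$ there is $0$. Hence $g=0$, so $f=0$, a contradiction, and $\ell(D_2)=0$. In both cases $\deg D_i=g_m-1$ together with $\ell(D_i)=0$ yields $i(D_i)=0$, i.e.\ $D_i$ is non-special. As indicated, the main obstacle is the multiplicity computation for $\pi_{\lfloor m/2\rfloor}$, which is exactly where the detailed splitting and ramification behaviour of the zeros of $x_1$ in the tower is genuinely needed; everything else is bookkeeping with valuations and Riemann--Roch.
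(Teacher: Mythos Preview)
Your proof is correct and rests on the same two ingredients as the paper's: the identity $(\pi_{\lfloor m/2\rfloor})^{T_m}=q^{\lceil m/2\rceil}A^{(m)}-c_mP_\infty^{(m)}$ (for the multiplicities at the places of $X_k^{(m)}$, $k\ge1$, both you and the paper defer to the literature --- the paper cites \cite[Appendix~A.1.(a)]{SAKSD2001}) and Theorem~\ref{thm:PSTresults}(2)(b). The execution differs only in packaging. The paper works abstractly via linear equivalence: it adds $(\pi_{\lfloor m/2\rfloor})$ to $c_mP_\infty^{(m)}-A^{(m)}$ to obtain the equivalent non-special divisor $(q^{\lceil m/2\rceil}-1)A^{(m)}$ of degree $g_m$, then invokes \cite[Lemma~3]{BL2006} to subtract the rational place $P_\infty^{(m)}\notin\Supp$ and get $D_1$; for $D_2$ it simply records $D_2\sim D_1$ via the principal divisor of $\rho=\pi_{\lfloor m/2\rfloor}/(x_1^{q-1}+1)$. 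Your route unwinds the same equivalences into explicit multiplication maps on Riemann--Roch spaces ($f\mapsto f\pi_{\lfloor m/2\rfloor}$ for $D_1$, $f\mapsto f(x_1^{q-1}+1)$ for $D_2$), lands in $\langle\pi_{\lfloor m/2\rfloor}\rangle$, and reads off $\ell(D_i)=0$ directly; this is more self-contained (no appeal to \cite{BL2006}) at the price of a little extra valuation bookkeeping. One harmless slip: in the $D_1$ step you write $v_{P_\infty^{(m)}}(f)\ge0$, but since the coefficient of $P_\infty^{(m)}$ in $D_1$ is $-1$ the correct bound is $v_{P_\infty^{(m)}}(f)\ge1$; your argument goes through either way.
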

\begin{proof}
	Consider the function $\pi_{\floor{m/2}}=\prod_{i=1}^{\floor{m/2}} (x_i^{q-1}+1) \in T_m$. From Theorem \ref{thm:PSTresults} (i), we have 
    \[
    (\pi_{\floor{m/2}})^{T_m} = (\pi_{\floor{m/2}})_0^{T_m} - (q^m - q^{m - \floor{m/2}})P_\infty^{m}.
    \]
    Moreover, the zeros of $\pi_{\floor{m/2}}$ belong to $X_k^{(m)}$ for $0 \le k \le \floor{m/2}-1$, and the valuation of $\pi_{\floor{m/2}}$ at $Q \in X_k^{(m)}$ is given by
    \[
    v_Q(\pi_{\floor{m/2}}) = q^{m-(\floor{m/2}-1)-1} = q^{m-\floor{m/2}},
    \]
    see \cite[Appendix A.1.(a)]{SAKSD2001}.
	Thus,
	\begin{align*}
		\left( \pi_{\floor*{m/2}} \right)^{T_m} & = (\pi_{\floor{m/2}})_0^{T_m} - (q^m - q^{m - \floor{m/2}})P_\infty^{m} \\
		& = \sum_{k=0}^{ \floor*{m/2} - 1 } q^{m- \floor{m/2}} A_k^{(m)}  - \left( q^m - q^{\ceil*{m/2}} \right) P_\infty^{(m)} \\
		& = \sum_{k=0}^{ \floor*{(m-2)/2}  } q^{\ceil*{m/2}} A_k^{(m)}  - \left( q^m - q^{\ceil*{m/2}} \right) P_\infty^{(m)}. 
	\end{align*}
	Hence, the zero divisor $(\pi_{\floor*{m/2}})_0^{T_m}$ of the function $\pi_{\floor*{m/2}}$ in $T_m$ is given by 
	\[
	\left( \pi_{\floor*{m/2}} \right)_0^{T_m}= \sum_{k=0}^{ \floor*{(m-2)/2}  } q^{\ceil*{m/2}} A_k^{(m)} .
	\]
    The divisor
    $\left( q^m - q^{\ceil*{m/2}} \right)P_\infty^{(m)}-A^{(m)} $
    is  non-special divisor of degree $g_m$ by Theorem \ref{thm:PSTresults} (ii).  From
    \begin{equation}
    \label{eq:A=sum A_{a, k}}
          A^{(m)}=\sum_{k=0}^{ \floor*{ m/2 } - 1} A_k^{(m)}, 
    \end{equation}
    we have that 
    \begin{align*}  
    \left( q^m - q^{\ceil*{m/2}} \right)P_\infty^{(m)}-A^{(m)} + \left( \pi_{\floor*{m/2}} \right)^{T_m} & = \left( \pi_{\floor*{m/2}} \right)_0^{T_m} - \sum_{k=0}^{ \floor*{ m/2 } - 1} A_k^{(m)} 
    \\ & = \sum_{k=0}^{ \floor*{(m-2)/2}  } \left( q^{\ceil*{m/2}} - 1 \right) A_k^{(m)}  
   \end{align*}
   is also a non-special of degree $g_m$. 
   
   Moreover, $P_\infty^{(m)}$ is a rational place of $T_m$ such that 
   $$P_\infty^{(m)} \notin \Supp \left( \sum_{k=0}^{ \floor*{(m-2)/2}  } \left( q^{\ceil*{m/2}} - 1 \right) A_k^{(m)}   \right).$$ 
   So, from \cite[Lemma 3]{BL2006}, 
   
    \[
    \sum_{k=0}^{ \floor*{(m-2)/2}  } \left( q^{\ceil*{m/2}} - 1 \right) A_k^{(m)} - P_\infty^{(m)} =  \sum_{\alpha \in \Omega^*} \left( q^{\ceil*{m/2}}-1 \right)P_\alpha^{(m)} +  \sum_{k=1}^{ \floor*{(m-2)/2}  } (q^{\ceil*{m/2}}-1) A_k - P_\infty^{(m)}
    \]
    is a non-special divisor of degree $g_m-1$. We conclude the divisor given in (\ref{eq:nonspecialdivTm1}) is non-special of degree $g_m-1$.
    
	Now, let $\rho := \pi_{\floor*{m/2}} \cdot (x_1^{q-1}+1)^{-1} \in T_m$. We have that
	\begin{align*}
    (\rho )^{T_m} & = 
		\left(\pi_{\floor*{m/2}} \cdot (x_1^{q-1}+1)^{-1}  \right)^{T_m} \\
        & = \left( \pi_{\floor*{m/2}} \right)^{T_m} - (x_1^{q-1}+1)^{T_m} \\
		& = \left( \pi_{\floor*{m/2}} \right)_0^{T_m} - \left( \pi_{\floor*{m/2}} \right)_\infty^{T_m} - (x_1^{q-1}+1)^{T_m},
	\end{align*}
	this yields
	\begin{align*}
		\left( \pi_{\floor*{m/2}} \right)_0^{T_m} - A^{(m)} - P_\infty^{(m)} -  \left( \rho \right)^{T_m} & = \left( \pi_{\floor*{m/2}} \right)_\infty^{T_m} - A^{(m)} - P_\infty^{(m)} + (x_1^{q-1}+1)^{T_m}
		\\
		& = 
		\sum_{\alpha \in \Omega^*} q^{m-1}P_\alpha^{(m)} + \left(q^{m-1} - q^{\ceil*{m/2}} - 1 \right) P_\infty^{(m)} - A^{(m)}.
	\end{align*} 
	From above and Equation (\ref{eq:A=sum A_{a, k}}), we conclude that the divisor 
	\[
	\sum_{\alpha \in \Omega^*} (q^{m-1}-1)P_\alpha^{(m)} - \sum_{k=1}^{\floor*{(m-2)/2}} A_k^{(m)} + \left(q^{m-1} - q^{\ceil*{m/2}} - 1 \right) P_\infty^{(m)}
	\]
	is also a non-special divisor in $T_m$ degree $g_m-1$, since it is equivalent to the non-special divisor $\left( \pi_{\floor*{m/2}} \right)_0^{T_m} - A^{(m)} - P_\infty^{(m)}$ of degree $g_m-1$.
\end{proof}

%\section{Asymptotically Good LCD codes and LCP of Codes }

\section{ Sequences of LCD AG codes and LCP of AG Codes attaining TVZ-bound}
\label{sec:LCD_LCP_in_Tm}

In this section we construct sequences of LCD AG codes and LCPs of AG codes over $\F_{q^2}$ attaining the Tsfasman–Vlăduţ–Zink bound using the tower $\cT$  presented in Section \ref{SubsectionGSTower}.  

We begin by defining AG codes from each $T_m$ for $m \ge 2$.

 \begin{definition}
	Let $t := (x_1^{q^2} - x_1)/(x_1^q+x_1) \in T_1 \subseteq T_m$. Define the divisor $D^{(m)} \in \Div(T_m)$ as
	\[
	D^{(m)} :=  (t)_0^{T_m}
	=\left( \frac{x_1^{q^2} - x_1}{x_1^q+x_1} \right)_0^{T_m} ,		\]
	and $n_m := \deg(D^{(m)})$ its degree.
\end{definition}
The zeros of $t$ in $T_1$ are completely split in the extension $T_m/T_1$. In this way, the divisor $D^{(m)}$ is a sum of 
\[
n_m = \deg(D^{(m)}) = [T_m : T_1](q^2-q) = q^m(q-1).
\]
pairwise distinct rational places. 
Furthermore, observe that in $F_1$
\begin{equation*}
	dt = -(x_1^q+x_1)^{q-2} dx_1
\end{equation*}
and so, by \cite[Remark 4.3.7, (c)]{STICH2008},
\begin{equation}
	\label{Div-dt_differential}
	(dt)^{T_m}= (q-2)(x_1^{q}+x_1)^{T_m} -2(x_1)^{T_m}_\infty + \Diff(T_m/T_1),
\end{equation}
where $\Diff(T_m/T_1)$ denotes the different divisor in the extension $T_m/T_1$.

Choosing $G^{(m)} \in \Div(T_m)$  another divisor of $T_m$ such that 
\[
\Supp(G^{(m)}) \cap \Supp(D^{(m)}) = \emptyset \quad \mbox{and} \quad  2g_m-2 <  \deg(G^{(m)}) < n_m, 
\] 
we have $\CL(D^{(m)}, G^{(m)})$ is an $[n_m, \, k_m, d_m]$-code over $\F_{q^2}$ with
\begin{equation}
	\label{ParametersAGCodesTm}
	n_m=q^{m+1}-q^m, \qquad k_m = \ell(G^{(m)}) \quad \mbox{and} \quad d_m \ge q^{m+1}-q^m - \deg(G^{(m)}),
\end{equation}
by Proposition \ref{parametersAGcodes}.
Consider the differential $\omega^{(m)} := dt/t$ of $T_m$ and $W^{(m)}:=(\omega^{(m)})^{T_m}$ its canonical divisor on $T_m$. By Proposition \ref{prop:parametersAGcodes},  since $D^{(m)} =  (t)_0^{T_m}$, the dual code of  $\CL(D^{(m)}, G^{(m)})$ is given by
\begin{equation}
	\label{eq:DualAGCodeinTm}
	\CL(D^{(m)}, G^{(m)})^\perp = \CL(D^{(m)}, D^{(m)} - G^{(m)} + W^{(m)}).
\end{equation}

Our goal is to  find explicit divisors $G^{(m)}$ and $H^{(m)}$  at each level $T_m$ of the tower $\cT $ to construct  LCD AG codes $\CL(D^{(m)}, G^{(m)})$ over $\F_{q^2}$, and describe code pairs 
$$
\left( \CL(D^{(m)}, G^{(m)}), \CL(D^{(m)}, H^{(m)})\right)
$$ 
that are LCP of AG codes over $\F_{q^2}$.

For the construction of LCD codes from $T_m$, we initially need to describe the dual of an AG code $\CL(D^{(m)}, G^{(m)})$ which is given by Equation \ref{eq:DualAGCodeinTm}. For this reason, in the following lemmas, we  describe the different divisor $\Diff(T_m/T_1)$ of the extension $T_m/T_1$, and the canonical divisor $W^{(m)}=(dt/t)^{T_m}$. 

\begin{lemma} \label{DifferentDivisorTm}
For $m \ge 2$, the different divisor $\Diff(T_m/T_1)$ of the extension $T_m/T_1$ is given by
		$$
		\Diff(T_m/T_1) = \sum_{\alpha \in \Omega^*} 2(q^{m-1}-1)P_\alpha^{(m)} +2(q^{m-1}-1)P_\infty^{(m)} +  \sum_{k=1}^{\floor*{(m-2)/2}}  2(q^{m-2k-1} - 1)A_{k}^{(m)}.
		$$
	\end{lemma}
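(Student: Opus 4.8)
\noindent\emph{Proof strategy.} The plan is to prove the formula by induction on $m$, using transitivity of the different in the tower: for $m\ge 3$,
\[
\Diff(T_m/T_1)=\Diff(T_m/T_{m-1})+\Con_{T_m/T_{m-1}}\bigl(\Diff(T_{m-1}/T_1)\bigr).
\]
The base case $m=2$ is handled directly. In the extension $T_2=T_1(x_2)$ defined by $x_2^q+x_2=x_1^q/(x_1^{q-1}+1)$, a valuation count (using $v_{P_\alpha^{(1)}}(x_1^{q-1}+1)=1$ for $\alpha\in\Omega^*$) shows that the right-hand side has a simple pole at $P_\infty^{(1)}$ and at each $P_\alpha^{(1)}$, and a zero at $P_0^{(1)}$ and at each $P_a^{(1)}$ with $a\notin\Omega$. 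Hence only $P_\infty^{(1)}$ and the $P_\alpha^{(1)}$ ramify, each totally, and the standard different-exponent formula for Artin--Schreier-type extensions $z^q+z=u$ gives exponent $2(q-1)$ at each; this is the asserted formula for $m=2$ (the sum over $k$ being empty).

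For the inductive step I would first pin down $\Diff(T_m/T_{m-1})$. Since $T_m=T_{m-1}(x_m)$ is defined by $x_m^q+x_m=x_{m-1}^q/(x_{m-1}^{q-1}+1)$, ramification can occur only where the right-hand side has a pole of order prime to the characteristic; combining Lemma~\ref{behaviourzeroinTm} with the description of the ramified places of the tower recalled above, these are $P_\infty^{(m)}$, the places $P_\alpha^{(m)}$ ($\alpha\in\Omega^*$), and exactly the places of $X_k^{(m)}$ with $k<(m-1)/2$, i.e. $1\le k\le\floor*{(m-2)/2}$ (for $k\ge(m-1)/2$ such places are unramified in $T_m/T_{m-1}$ by Lemma~\ref{behaviourzeroinTm}(i)). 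At each of these the right-hand side has a pole of order exactly $1$: at the places over $P_\infty$ and $P_\alpha$ because $x_{m-1}$ has a simple pole there, and at the restriction to $T_{m-1}$ of a place $Q\in X_k^{(m)}$ because, by Lemma~\ref{behaviourzeroinTm}(ii)--(iii), that restriction is a pole of $x_{m-1}$ unramified over $\F_{q^2}(x_{m-1})$, so $v(x_{m-1})=-1$ there as well. The Artin--Schreier different formula then gives exponent $2(q-1)$ at every ramified place, so
\[
\Diff(T_m/T_{m-1})=2(q-1)P_\infty^{(m)}+\sum_{\alpha\in\Omega^*}2(q-1)P_\alpha^{(m)}+\sum_{k=1}^{\floor*{(m-2)/2}}2(q-1)A_k^{(m)}.
\]

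Next I would compute $\Con_{T_m/T_{m-1}}$ on the support of $\Diff(T_{m-1}/T_1)$ (the induction hypothesis). Since $P_\infty$ and the $P_\alpha$ are totally ramified throughout the tower, $\Con(P_\infty^{(m-1)})=qP_\infty^{(m)}$ and $\Con(P_\alpha^{(m-1)})=qP_\alpha^{(m)}$; and for $1\le k\le\floor*{(m-3)/2}$ one has $2k+1\le m-1$, so Lemma~\ref{behaviourzeroinTm}(ii) makes the members of $X_k$ totally ramified in $T_m/T_{2k+1}$, whence $\Con(A_k^{(m-1)})=qA_k^{(m)}$. Substituting into the transitivity formula: the coefficient of $P_\infty^{(m)}$ becomes $2(q-1)+q\cdot 2(q^{m-2}-1)=2(q^{m-1}-1)$, that of each $P_\alpha^{(m)}$ is the same, and for $1\le k\le\floor*{(m-3)/2}$ the coefficient of $A_k^{(m)}$ becomes $2(q-1)+q\cdot 2(q^{m-2k-2}-1)=2(q^{m-2k-1}-1)$. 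When $m$ is even the value $k=\floor*{(m-2)/2}=(m-2)/2$ is the only index not in the previous range (for odd $m$ the two ranges coincide), and for it $A_k^{(m)}$ receives coefficient $2(q-1)$ from $\Diff(T_m/T_{m-1})$ alone, which equals $2(q^{m-2k-1}-1)$ for that $k$. Collecting terms yields exactly the claimed expression.

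The main obstacle is making Step~1 airtight: one must lean on Lemma~\ref{behaviourzeroinTm} and the splitting analysis of \cite{GStower1996} to be certain that the listed places are \emph{all} the ramified places of $T_m/T_{m-1}$ and that the right-hand side of the defining relation really has pole order $1$ at each, so that the Artin--Schreier different exponent is uniformly $2(q-1)$. Once that is secured the remainder is bookkeeping, but one should still treat the two boundary phenomena with care: the base case $m=2$, and the index $k=\floor*{(m-2)/2}$ (genuinely new only for even $m$, and with $2(q^{m-2k-1}-1)$ to be read as $2(q-1)$ there; note that for odd $m$ the excluded value $k=(m-1)/2$ would carry coefficient $2(q^{0}-1)=0$ in any case). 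A useful consistency check is that Hurwitz's genus formula for $T_m/T_1$, together with $\deg A_k^{(m)}=q^k(q-1)$ from Lemma~\ref{lemma-degA_k}, must return $\deg\Diff(T_m/T_1)=2g_m-2+2q^{m-1}$.
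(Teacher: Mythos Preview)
Your proof is correct and takes essentially the same approach as the paper: both rely on transitivity of the different together with Lemma~\ref{behaviourzeroinTm} to identify the ramified places and their exponents. The paper simply states the values $d(P_\alpha^{(m)})=d(P_\infty^{(m)})=2(q^{m-1}-1)$ and $d(Q)=2(q^{m-2k-1}-1)$ for $Q\in A_k^{(m)}$ directly (citing \cite[Corollary~3.4.11]{STICH2008} and \cite[Section~4]{ADKS2001}), whereas you unfold that same transitivity computation as an explicit induction on $m$, determining $\Diff(T_m/T_{m-1})$ at each step via the Artin--Schreier different formula and then applying the conorm; your version is more self-contained, and your careful treatment of the boundary index $k=\floor*{(m-2)/2}$ for even $m$ is exactly what the paper's terse citation is hiding.
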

		\begin{proof} The only ramified places in $T_m$ are $P_\alpha^{(m)}$ for $\alpha \in \Omega^*$, $P_\infty^{(m)}$, and the places in $A_k$ for $1 \le k \le \floor*{(m-2)/2} = \floor*{m/2}-1$.
			By Lemma \ref{behaviourzeroinTm} and the transitivity of the different exponent (see \cite[Corollary 3.4.11]{STICH2008}, and \cite[Section 4]{ADKS2001}) for $1 \le k \le \floor*{(m-2)/2}$ and $Q \in A_k^{(m)}$, we have
			$$ 
			d(P_\alpha^{(m)}) = d(P_\infty^{(m)}) = 2(q^{m-1}-1) \quad \mbox{and} \quad d(Q)=2(q^{m-(2k+1)} - 1). 
			$$
			in the extension $T_m/T_1$. Therefore, we obtain that 
            \[
			\Diff(T_m/T_1) = \sum_{\alpha \in \Omega^*} 2(q^{m-1}-1)P_\alpha^{(m)} +2(q^{m-1}-1)P_\infty^{(m)} +  \sum_{k=1}^{\floor*{(m-2)/2}}  2(q^{m-2k-1} - 1)A_{k}^{(m)}.
            \]
		\end{proof}

	\begin{lemma}
		\label{CanonicalDivisorTm}
		The canonical divisor $W^{(m)} = (dt/t)^{T_m}$ is given by
		$$
		-D^{(m)}+(x_1^{q-2})_0 + \sum_{\alpha \in \Omega^*} (q^{m}-2)P_\alpha^{(m)} +(q^{m}-2)P_\infty^{(m)} +  \sum_{k=1}^{\floor*{(m-2)/2}} 2(q^{m-2k-1} - 1)A_{k}^{(m)}.
		$$
	\end{lemma}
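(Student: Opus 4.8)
The plan is to compute $W^{(m)}=(dt/t)^{T_m}$ directly from the identity $(dt/t)^{T_m}=(dt)^{T_m}-(t)^{T_m}$, feeding in the divisor formulas already available. Substituting Equation \eqref{Div-dt_differential} gives
\[
W^{(m)} = (q-2)(x_1^q+x_1)^{T_m} - 2(x_1)_\infty^{T_m} + \Diff(T_m/T_1) - (t)^{T_m},
\]
so, since $\Diff(T_m/T_1)$ is supplied by Lemma \ref{DifferentDivisorTm}, it remains only to compute the divisors of $t$ and of $x_1^q+x_1$ and the pole divisor of $x_1$, all in $T_m$.

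I would first record these on the rational function field $T_1=\F_{q^2}(x_1)$, where $x_1^q+x_1=\prod_{\alpha\in\Omega}(x_1-\alpha)$ and $t=\prod_{a\in\F_{q^2}\setminus\Omega}(x_1-a)$; hence
\[
(x_1)^{T_1}=P_0^{(1)}-P_\infty^{(1)},\qquad (x_1^q+x_1)^{T_1}=P_0^{(1)}+\sum_{\alpha\in\Omega^*}P_\alpha^{(1)}-qP_\infty^{(1)},
\]
and $(t)^{T_1}=\sum_{a\notin\Omega}P_a^{(1)}-(q^2-q)P_\infty^{(1)}$. Passing to $T_m$ and invoking the ramification behaviour recalled in Section \ref{SubsectionGSTower} -- the places $P_a$ with $a\notin\Omega$ split completely, while $P_\infty^{(1)}$ and the $P_\alpha^{(1)}$ with $\alpha\in\Omega^*$ are totally ramified of index $[T_m:T_1]=q^{m-1}$ -- and using that the conorm of $P_0^{(1)}$ is exactly $(x_1)_0^{T_m}$ (because $(x_1)^{T_m}$ is the conorm of $P_0^{(1)}-P_\infty^{(1)}$), one obtains $(t)^{T_m}=D^{(m)}-n_mP_\infty^{(m)}$ with $n_m=q^{m+1}-q^m$, $(x_1)_\infty^{T_m}=q^{m-1}P_\infty^{(m)}$, and
\[
(x_1^q+x_1)^{T_m}=(x_1)_0^{T_m}+q^{m-1}\sum_{\alpha\in\Omega^*}P_\alpha^{(m)}-q^{m}P_\infty^{(m)}.
\]
The crucial observation is that I do not need to decompose the zero divisor $(x_1)_0^{T_m}$ of $x_1$ in $T_m$: once multiplied by $q-2$ it is precisely the divisor $(x_1^{q-2})_0$ occurring in the statement.

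It then remains to substitute all of this, together with Lemma \ref{DifferentDivisorTm}, back into the expression for $W^{(m)}$ and to collect coefficients place by place. At $P_\alpha^{(m)}$ with $\alpha\in\Omega^*$ the coefficient becomes $(q-2)q^{m-1}+2(q^{m-1}-1)=q^m-2$; at $P_\infty^{(m)}$ it becomes $-(q-2)q^m-2q^{m-1}+2(q^{m-1}-1)+n_m=q^m-2$; the summands $2(q^{m-2k-1}-1)A_k^{(m)}$ are carried over verbatim from $\Diff(T_m/T_1)$; the term $-D^{(m)}$ comes from $-(t)^{T_m}$; and $(x_1^{q-2})_0$ is the residual part of $(q-2)(x_1^q+x_1)^{T_m}$. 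This reproduces the asserted divisor. I expect the only genuinely delicate point to be bookkeeping: the supports overlap, since every place of $T_m$ lying over $P_0^{(1)}$ -- except the common zero of $x_1,\dots,x_m$ -- is a zero of some $x_{k+1}^{q-1}+1$ and hence lies in some $A_k^{(m)}$, so in the displayed divisor certain places collect contributions from both $(x_1^{q-2})_0$ and the $A_k^{(m)}$ terms. This overlap is harmless, but it is exactly why the statement leaves $(x_1^{q-2})_0$ unexpanded instead of folding it into the $A_k^{(m)}$; keeping track of it cleanly is the main thing to watch.
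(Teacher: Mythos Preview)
Your proposal is correct and follows essentially the same route as the paper's proof: both start from $(dt)^{T_m}=(q-2)(x_1^q+x_1)^{T_m}-2(x_1)_\infty^{T_m}+\Diff(T_m/T_1)$, feed in Lemma~\ref{DifferentDivisorTm}, and then subtract $(t)^{T_m}$. The only cosmetic difference is that the paper factors $x_1^q+x_1=x_1\cdot(x_1^{q-1}+1)$ and computes the divisors of $x_1^{q-2}$ and $(x_1^{q-1}+1)^{q-2}$ separately, whereas you handle $(x_1^q+x_1)^{T_m}$ in one piece and peel off $(q-2)(x_1)_0^{T_m}=(x_1^{q-2})_0$ at the end; the arithmetic and the final bookkeeping are identical.
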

	\begin{proof}
		Let $t=(x_1^{q^2}-x_1)/(x_1^q+x_1) \in T_m$. By derivation rules, the differential $dt$ is given by $$dt=-(x_1^q+x_1)^{q-2}dx_1.$$ Moreover, from \cite[Remark 4.4.7 (c)]{STICH2008} and Lemma \ref{DifferentDivisorTm}, 
		\begin{align*}
			(dx_1)^{T_m} & = -2(x_1)_\infty^{T_m}+\Diff(T_m/T_1) \\
			& = \sum_{\alpha \in \Omega^*} 2(q^{m-1}-1)P_\alpha^{(m)}  +  \sum_{k=1}^{\floor*{(m-2)/2}}  2(q^{m-2k-1} - 1)A_{k}^{(m)} -2 P_\infty^{(m)}.
		\end{align*}
		
		Hence, since 
		\begin{align*}
		&(x_1^{q-2})^{T_m}=(x_1^{q-2})_0^{T_m}-(q^m-2q^{m-1})P_\infty^{(m)}\text{, and}\\		
		&\left( (x_1^{q-1}+1)^{q-2} \right)^{T_m} = \sum_{\alpha \in \Omega^*} (q^m-2q^{m-1})P_{\alpha}^{(m)} - (q^{m+1}-3q^m+2q^{m-1})P_\infty^{(m)},
		\end{align*}		
		we obtain 
		\begin{align*}  
			(dt)^{T_m} & = \left( (x_1^q+x_1)^{q-2} \right)^{T_m} + (dx_1)^{T_m} \\
			& = (x_1^{q-2})^{T_m} + \left( (x_1^{q-1}+1)^{q-2} \right)^{T_m} + (dx_1)^{T_m} \\
			& =
            (x_1^{q-2})_0^{T_m} + \sum_{\alpha \in \Omega^*} (q^{m}-2)P_\alpha^{(m)}  +  \sum_{k=1}^{\floor*{(m-2)/2}}  2(q^{m-2k-1} - 1)A_{k}^{(m)} \\
            & \quad- (q^{m+1}-2q^m+2)P_\infty^{(m)}.
    \end{align*}
		Therefore,
		\begin{align*}
			W^{(m)} & = -(t)_0^{T_m} + (t)_\infty^{T_m} + (dt)^{T_m} \\
			& = -D^{(m)} + (q^{m+1}-q^m)P_\infty^{(m)} +(dt)^{T_m} \\
			& = -D^{(m)} + (x_1^{q-2})_0^{T_m} + \sum_{\alpha \in \Omega^*} (q^{m}-2)P_\alpha^{(m)}  +  \sum_{k=1}^{\floor*{(m-2)/2}}  2(q^{m-2k-1} - 1)A_{k}^{(m)} \\
         & \quad + (q^m-2)P_\infty^{(m)}.
         \end{align*}
	\end{proof}

With the previous lemmas, we can describe the dual of an AG code $\CL(D^{(m)}, G^{(m)})$ with $\Supp(G^{(m)})$ contained in $\Supp( (\pi_{\floor*{m/2}})^{T_m})$ in terms of the divisors $A_k^{(m)}$ for $0 \le k \le \floor*{m/2}-1$ and the unique pole $P_\infty^{(m)}$ of $x_1$ in $T_m$. As a result, let us see how to explicitly construct LCD codes over $\F_{q^2}$ from $T_m$ for all $m \ge 2$.

\begin{theorem}
	\label{LCDinTmGeral}
	Let $D^{(m)}=(t)_0^{T_m}$ be the zero divisor of the function $t:=(x_1^{q^2}-x_1)/(x_1^q+x_1) \in T_m$.
	Suppose that there are divisors $G^{(m)}, H^{(m)} \in \Div(T_m)$ satisfying the following three properties:
	\begin{enumerate}[label=(\roman*)]
		\item \label{cond:LCDinTmC1}
        $\deg(G^{(m)}), \deg(H^{(m)}) < q^{m+1}-q^m$;
		\item \label{cond:LCDinTmC2}
        $ \gcd\!\left(G^{(m)}, \, H^{(m)} \right) = \sum_{\alpha \in \Omega^*} (q^{m-1}-1)P_\alpha^{(m)} -  \sum_{k=1}^{ \floor*{(m-2)/2} } A_{k}^{(m)} + (q^{m-1} - q^{\ceil*{m/2}}-1)P_\infty^{(m)}$; 
	 \item \label{cond:LCDinTmC3} 
	 \begin{align*}
	     G^{(m)}+ H^{(m)} &= (x_1^{q-2})_0 + \sum_{\alpha \in \Omega^*} (q^{m}-2)P_\alpha^{(m)} \\
	     &+   \sum_{k=1}^{\floor*{(m-2)/2}} 2(q^{m-2k-1} - 1)A_{k}^{(m)} 
         + (q^{m}-2)P_\infty^{(m)}.
          \end{align*}
	\end{enumerate}

Then, $\CL(D^{(m)}, G^{(m)})$ is an LCD $[n_m, k_m, d_m]$-code over $\F_{q^2}$, where
$$
 \begin{cases}
	n_m = q^{m+1}-q^m,  \\	
	k_m=\deg(G^{(m)})+1-(q^{\floor*{m/2} } - 1)(q^{\ceil*{m/2} } -1) \mbox{, and} \\
	d_m \ge q^{m+1}-q^m-\deg(G^{(m)}).
	\end{cases}
	$$
    Furthermore, we have $\CL(D^{(m)}, G^{(m)})^\perp = \CL(D^{(m)}, H^{(m)})$.
   \end{theorem}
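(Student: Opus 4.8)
The plan is to verify that the hypotheses of \cref{thm:thmlcp3.5} are met with $H^{(m)}$ playing the role of the dual-producing divisor, and then to read off the parameters. First I would check condition (i) of \cref{thm:thmlcp3.5}: since $\gcd(G^{(m)},H^{(m)})$ has degree $g_m-1$ by \cref{prop:nonspecialTmGeral} (it is exactly the divisor in \eqref{eq:nonspecialdivTm2}) and $\lmd(G^{(m)},H^{(m)}) = G^{(m)}+H^{(m)}-\gcd(G^{(m)},H^{(m)})$ by \eqref{BasicPropsGCDeLMD}, a short degree computation using hypothesis \ref{cond:LCDinTmC3} and \cref{lemma-degA_k} gives $\deg(G^{(m)})+\deg(H^{(m)}) = \deg(x_1^{q-2})_0 + (q-1)(q^m-2) + (q^m-2) + \sum_{k=1}^{\floor{(m-2)/2}} 2(q^{m-2k-1}-1)(q^{k+1}-q^k)$; combining with $\deg(x_1^{q-2})_0 = (q-2)q^{m-1}$ this should simplify to $2(q^{m+1}-q^m) + 2g_m - 2$, whence $\ell(G^{(m)})+\ell(H^{(m)}) = \deg(G^{(m)})+\deg(H^{(m)}) + 2 - 2g_m = 2n_m$ once each individual degree lies in the Riemann--Roch range $(2g_m-2, n_m)$ — and hypothesis \ref{cond:LCDinTmC1} together with this sum forces $\deg(G^{(m)}),\deg(H^{(m)}) > 2g_m-2$. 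So conditions (i), (ii), (iii) of \cref{thm:thmlcp3.5} hold.

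Next I would address condition (iv): both $\gcd(G^{(m)},H^{(m)})$ and $\lmd(G^{(m)},H^{(m)})-D^{(m)}$ must be non-special. The first is immediate from \cref{prop:nonspecialTmGeral}. For the second, I would compute $\lmd(G^{(m)},H^{(m)}) - D^{(m)} = (G^{(m)}+H^{(m)}) - \gcd(G^{(m)},H^{(m)}) - D^{(m)}$ and show, using \cref{CanonicalDivisorTm}, that this divisor is linearly equivalent to $W^{(m)} - \gcd(G^{(m)},H^{(m)})$; indeed hypothesis \ref{cond:LCDinTmC3} says precisely that $G^{(m)}+H^{(m)} = W^{(m)}+D^{(m)}$ (reading off \cref{CanonicalDivisorTm} with $(x_1^{q-2})_0 = (x_1^{q-2})_0^{T_m}$), so $\lmd(G^{(m)},H^{(m)})-D^{(m)} = W^{(m)} - \gcd(G^{(m)},H^{(m)})$. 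Since $i(W^{(m)}-\gcd(G^{(m)},H^{(m)})) = \ell(\gcd(G^{(m)},H^{(m)}))$ by Riemann--Roch duality, and $\gcd(G^{(m)},H^{(m)})$ is non-special of degree $g_m-1$ — hence has $\ell = 0$ by the Riemann--Roch formula $\ell(A) = \deg A + 1 - g + i(A)$ — we conclude $\lmd(G^{(m)},H^{(m)})-D^{(m)}$ is non-special. Thus \cref{thm:thmlcp3.5} applies and $(\CL(D^{(m)},G^{(m)}),\CL(D^{(m)},H^{(m)}))$ is an LCP of codes.

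To upgrade LCP to LCD I would invoke the remark following \cref{thm:thmlcp3.5}: by \eqref{eq:DualAGCodeinTm}, $\CL(D^{(m)},G^{(m)})^\perp = \CL(D^{(m)}, D^{(m)}-G^{(m)}+W^{(m)})$, and the identity $G^{(m)}+H^{(m)} = W^{(m)}+D^{(m)}$ from \ref{cond:LCDinTmC3} gives $D^{(m)}-G^{(m)}+W^{(m)} = H^{(m)}$ on the nose (not merely up to equivalence). Hence $\CL(D^{(m)},G^{(m)})^\perp = \CL(D^{(m)},H^{(m)})$, and an LCP $(\C,\C^\perp)$ is by definition an LCD code. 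Finally, for the parameters: $n_m = q^{m+1}-q^m$ was recorded in \eqref{ParametersAGCodesTm}; since $2g_m-2 < \deg(G^{(m)}) < n_m$, \cref{parametersAGcodes} gives $k_m = \ell(G^{(m)}) = \deg(G^{(m)})+1-g_m$, and substituting $g_m = (q^{\floor{m/2}}-1)(q^{\ceil{m/2}}-1)$ from \eqref{genusTm} (valid in both parity cases) yields the stated formula; the bound $d_m \ge n_m - \deg(G^{(m)})$ is again \cref{parametersAGcodes}.

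The main obstacle I anticipate is the degree bookkeeping that establishes $\deg(G^{(m)})+\deg(H^{(m)}) = 2n_m + 2g_m - 2$: it requires carefully evaluating the telescoping-type sum $\sum_{k=1}^{\floor{(m-2)/2}} 2(q^{m-2k-1}-1)(q^{k+1}-q^k) = 2(q-1)\sum_{k=1}^{\floor{(m-2)/2}}(q^{m-k-1}-q^k)$ and combining it with the contributions $(q-2)q^{m-1}$, $(q-1)(q^m-2)$, and $(q^m-2)$, then checking the result matches $2q^m(q-1) + 2g_m-2$ across the two parity cases for $m$ — a computation that is routine in spirit but genuinely delicate to get exactly right, and where a stray factor would break both the dimension count and the verification of the Riemann--Roch degree window. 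Everything else is essentially a transcription of \cref{thm:thmlcp3.5}, the remark after it, \cref{prop:nonspecialTmGeral}, \cref{CanonicalDivisorTm}, and \cref{parametersAGcodes}.
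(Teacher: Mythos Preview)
Your overall strategy is correct and matches the paper's: identify $G^{(m)}+H^{(m)} = D^{(m)}+W^{(m)}$ from hypothesis \ref{cond:LCDinTmC3} and \cref{CanonicalDivisorTm}, read off the dual relation via \eqref{eq:DualAGCodeinTm}, and use \cref{prop:nonspecialTmGeral} for the non-speciality of the $\gcd$. However, there is a concrete slip that would make your verification of \cref{thm:thmlcp3.5}(ii) fail as written: you claim $\deg(G^{(m)})+\deg(H^{(m)}) = 2n_m + 2g_m - 2$ and hence $\ell(G^{(m)})+\ell(H^{(m)}) = 2n_m$. The correct value is $n_m + 2g_m - 2$, giving $\ell(G^{(m)})+\ell(H^{(m)}) = n_m$, which is exactly what \cref{thm:thmlcp3.5}(ii) asks for. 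You already possess the tool to see this with no bookkeeping at all: your own second paragraph records $G^{(m)}+H^{(m)} = D^{(m)} + W^{(m)}$, whence $\deg(G^{(m)}+H^{(m)}) = \deg D^{(m)} + \deg W^{(m)} = n_m + (2g_m - 2)$. The explicit telescoping sum you propose in the first paragraph is therefore unnecessary, and the ``main obstacle'' you flag disappears. With the corrected sum, the lower bounds $\deg(G^{(m)}), \deg(H^{(m)}) > 2g_m - 2$ follow at once from hypothesis \ref{cond:LCDinTmC1}.

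The paper's proof is the same argument but terser: it verifies only the dual relation and that $\gcd(G^{(m)},H^{(m)})$ is non-special of degree $g_m-1$, leaving implicit that the remaining hypotheses of \cref{thm:thmlcp3.5} --- including the non-speciality of $\lmd(G^{(m)},H^{(m)}) - D^{(m)} = W^{(m)} - \gcd(G^{(m)},H^{(m)})$, which you prove cleanly via Riemann--Roch duality --- follow automatically. For the dimension, the paper takes a slightly different route that bypasses the inequality $\deg(G^{(m)}) > 2g_m - 2$ altogether: since $G^{(m)} \ge \gcd(G^{(m)}, H^{(m)})$ and the $\gcd$ is non-special, $G^{(m)}$ is non-special as well, yielding $\ell(G^{(m)}) = \deg(G^{(m)}) + 1 - g_m$ directly.
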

\begin{proof}
	Let $G^{(m)}, H^{(m)} \in \Div(T_m)$ satisfying the Conditions \ref{cond:LCDinTmC1}, \ref{cond:LCDinTmC2} and \ref{cond:LCDinTmC3}. 
	We are going to show that $G^{(m)}$ and $H^{(m)}$ satisfy 
    $\gcd\!\left(G^{(m)}, \, H^{(m)} \right)$ is a non-special divisor of degree $g_m-1$, and $\CL(D^{(m)}, G^{(m)})^\perp=\CL(D^{(m)}, H^{(m)})$.

Let $W^{(m)} = (dt/t)^{T_m}$. By Condition \ref{cond:LCDinTmC3} and Lemma \ref{CanonicalDivisorTm}, we have 
	\[
	W^{(m)} = G^{(m)} + H^{(m)} - D^{(m)}.
	\]
	Hence, 
    \[
    \CL(D^{(m)}, G^{(m)})^\perp = \CL(D^{(m)}, H^{(m)})
    \]
    by Equation (\ref{eq:DualAGCodeinTm}).  
    
    On the other hand, Condition \ref{cond:LCDinTmC2} and Proposition \ref{prop:nonspecialTmGeral} tell us that $\gcd\!\left(G^{(m)}, \, H^{(m)} \right)$ is a non-special divisor of degree $g_m-1$. Therefore, $\CL(D^{(m)}, G^{(m)})$ is an LCD code over $\F_{q^2}$. Lastly, since 
    \[
    G^{(m)} \ge \gcd\!\left(G^{(m)}, \, H^{(m)} \right)
    \]
    and $\gcd\!\left(G^{(m)}, \, H^{(m)} \right)$ is non-special, we have that $G^{(m)}$ is also a non-special divisor, that is, $$\ell(G^{(m)})=\deg(G^{(m)})+1-g_m.$$ Hence, by Condition \ref{cond:LCDinTmC1} and Equation (\ref{ParametersAGCodesTm}), the code $\CL(D^{(m)}, G^{(m)})$ has parameters 
	$$[q^{m+1}-q^m, \deg(G^{(m)})+1-g_m, \ge q^{m+1}-q^m-\deg(G^{(m)}) ],$$ where the genus $g_m$ of $T_m$ is given by $(q^{\floor*{m/2} } - 1)(q^{\ceil*{m/2} } -1)$.
\end{proof}

 The next theorem  show us that there are pairs $\left( G^{(m)}, \, H^{(m)} \right)$ satisfying the conditions of Theorem \ref{LCDinTmGeral} with $\CL(D^{(m)}, G^{(m)})$ being a non-trivial code over $\F_{q^2}$.

\begin{theorem}
	\label{LCDinTm}
	Let $q \geq 4$. Let $D^{(m)} = (t)_0$ be the zero divisor of $t=(x_1^{q^2}-x_1)/(x_1^q+x_1)$ in $T_m$. Consider the divisors
	\[
	G^{(m)} := (x_1^{q-2})_0 + \sum_{\alpha \in \Omega^*} (q^{m-1}-1) P_\alpha^{(m)} -  \sum_{k=1}^{\floor*{(m-2)/2}} A_{k}^{(m)} + (q^m-q^{m-1}+q^{\ceil*{m/2}}-1)P_\infty^{(m)}
	\]
	and 
	\[
	H^{(m)} :=  \sum_{\alpha \in \Omega^*} (q^{m}-q^{m-1}-1) P_\alpha^{(m)} +  \sum_{k=1}^{\floor*{(m-2)/2}} (2q^{m-2k-1} - 1)A_{k}^{(m)} + (q^{m-1}-q^{\ceil*{m/2}}-1)P_\infty^{(m)}.
	\]
	Then, $\CL(D^{(m)}, G^{(m)})$ is an LCD code over $\F_{q^2}$ with parameters 
    \[
    [q^{m+1}-q^m, \ 2q^m-4q^{m-1}+2q^{\ceil*{m/2}}, \, \ge q^{m+1} - 4q^m + 4q^{m-1} - q^{\ceil*{m/2}} + q^{\floor*{m/2}}]
    \]
     and
	\[
	\CL(D^{(m)}, G^{(m)})^\perp =  \CL(D^{(m)}, H^{(m)}).
	\]
    Moreover, the sequence of codes $(\CL(D^{(m)}, G^{(m)}))_{m \ge 2}$ attains the Tsfasman–Vlăduţ–Zink bound.
\end{theorem}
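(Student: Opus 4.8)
The plan is to obtain Theorem \ref{LCDinTm} as an application of Theorem \ref{LCDinTmGeral}, followed by a short asymptotic computation. So the first task is to check that the divisors $G^{(m)}$ and $H^{(m)}$ written down in the statement satisfy Conditions \ref{cond:LCDinTmC1}--\ref{cond:LCDinTmC3} of Theorem \ref{LCDinTmGeral}. Condition \ref{cond:LCDinTmC3} is immediate by adding the two divisors coefficient by coefficient: at each $P_\alpha^{(m)}$ one gets $(q^{m-1}-1)+(q^m-q^{m-1}-1)=q^m-2$, at each place of $A_k^{(m)}$ one gets $-1+(2q^{m-2k-1}-1)=2(q^{m-2k-1}-1)$, at $P_\infty^{(m)}$ one gets $(q^m-q^{m-1}+q^{\ceil*{m/2}}-1)+(q^{m-1}-q^{\ceil*{m/2}}-1)=q^m-2$, and $(x_1^{q-2})_0$ is carried by $G^{(m)}$ alone; by Lemma \ref{CanonicalDivisorTm} this says precisely $G^{(m)}+H^{(m)}=W^{(m)}+D^{(m)}$.

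For Condition \ref{cond:LCDinTmC1} I would evaluate the two degrees using Lemma \ref{lemma-degA_k} (so $\deg A_k^{(m)}=q^k(q-1)$), $\deg P_\alpha^{(m)}=\deg P_\infty^{(m)}=1$, $\#\Omega^*=q-1$, and $\deg(x_1^{q-2})_0=(q-2)q^{m-1}$ (the pole of $x_1^{q-2}$ sits only at $P_\infty^{(m)}$, which is totally ramified over $P_\infty^{(1)}$ with ramification index $q^{m-1}=[T_m:T_1]$); summing the resulting geometric series gives $\deg G^{(m)}=3q^m-4q^{m-1}+q^{\ceil*{m/2}}-q^{\floor*{m/2}}$ and $\deg H^{(m)}=q^{m+1}-2q^m+4q^{m-1}-3q^{\ceil*{m/2}}-q^{\floor*{m/2}}$, and one checks both are $<n_m=q^{m+1}-q^m$. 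It is the bound on $\deg H^{(m)}$, namely $4q^{m-1}<q^m+3q^{\ceil*{m/2}}+q^{\floor*{m/2}}$, that forces $q\ge 4$: for $q\le 3$ it fails once $m$ is large. Condition \ref{cond:LCDinTmC2} asks for the place-by-place minimum of $G^{(m)}$ and $H^{(m)}$: at $P_\alpha^{(m)}$ use $q^{m-1}-1\le q^m-q^{m-1}-1$, at $P_\infty^{(m)}$ use $q^{m-1}-q^{\ceil*{m/2}}-1\le q^m-q^{m-1}+q^{\ceil*{m/2}}-1$, and at the places lying over $P_0^{(1)}$ (where $(x_1^{q-2})_0$ and the $A_k^{(m)}$ are supported) combine the positive contribution of $(x_1^{q-2})_0$ with the subtracted $\sum_k A_k^{(m)}$; using the description of how $P_0^{(1)}$ decomposes in the tower (Lemma \ref{behaviourzeroinTm}), the outcome is exactly the non-special divisor of degree $g_m-1$ furnished by Proposition \ref{prop:nonspecialTmGeral}.

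Granting Conditions \ref{cond:LCDinTmC1}--\ref{cond:LCDinTmC3}, Theorem \ref{LCDinTmGeral} gives at once that $\CL(D^{(m)},G^{(m)})$ is LCD over $\F_{q^2}$, that $\CL(D^{(m)},G^{(m)})^\perp=\CL(D^{(m)},H^{(m)})$, and that its parameters are $[\,n_m,\, \deg G^{(m)}+1-g_m,\, \ge n_m-\deg G^{(m)}\,]$. Substituting $n_m=q^{m+1}-q^m$, $g_m=(q^{\floor*{m/2}}-1)(q^{\ceil*{m/2}}-1)=q^m-q^{\floor*{m/2}}-q^{\ceil*{m/2}}+1$, and the value of $\deg G^{(m)}$ above, one obtains $k_m=2q^m-4q^{m-1}+2q^{\ceil*{m/2}}$ and $d_m\ge q^{m+1}-4q^m+4q^{m-1}-q^{\ceil*{m/2}}+q^{\floor*{m/2}}$, as claimed.

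For the asymptotic assertion I would divide by $n_m=q^m(q-1)$, finding $R_m=\frac{2(q-2)}{q(q-1)}+\frac{2}{(q-1)q^{\floor*{m/2}}}$ and $\delta_m\ge\frac{(q-2)^2}{q(q-1)}-\frac{q^{\ceil*{m/2}}-q^{\floor*{m/2}}}{(q-1)q^m}$, so that $R_m\to\frac{2(q-2)}{q(q-1)}$ and $\liminf_m\delta_m\ge\frac{(q-2)^2}{q(q-1)}$ as $m\to\infty$ (note $n_m,g_m\to\infty$, so this is indeed a sequence of codes of the required type). Since the ground field $\F_{q^2}$ is a square we have $\ell=q$, and $\frac{2(q-2)}{q(q-1)}+\frac{(q-2)^2}{q(q-1)}=\frac{q-2}{q-1}=1-\frac1{\ell-1}$; taking $\delta:=\frac{(q-2)^2}{q(q-1)}\in\bigl(0,\,1-(q-1)^{-1}\bigr]$ gives $\liminf_m R(\CL(D^{(m)},G^{(m)}))\ge 1-\frac1{\ell-1}-\delta$ and $\liminf_m\delta(\CL(D^{(m)},G^{(m)}))\ge\delta$, which is exactly the definition of attaining the TVZ-bound (and it should be no surprise that it is attained, since $\lambda(\cT)=q-1=A(q^2)$). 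The step I expect to be the real work is Condition \ref{cond:LCDinTmC2}: one has to pin down precisely which places of $T_m$ over $P_0^{(1)}$ occur in $(x_1^{q-2})_0$ and with which multiplicities, and how these interact with the $A_k^{(m)}$, so that $\gcd(G^{(m)},H^{(m)})$ comes out to be exactly the non-special divisor of Proposition \ref{prop:nonspecialTmGeral}; this is the one place where the fine ramification of the Garcia--Stichtenoth tower is genuinely used, everything else being degree bookkeeping together with the inequality pinning down $q\ge 4$.
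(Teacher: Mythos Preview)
Your overall plan matches the paper's: verify Conditions \ref{cond:LCDinTmC1}--\ref{cond:LCDinTmC3} of Theorem \ref{LCDinTmGeral}, read off the parameters, and pass to the limit. Your treatment of \ref{cond:LCDinTmC1}, \ref{cond:LCDinTmC3}, the parameter formulas, and the TVZ asymptotics is correct and coincides with the paper's computations.

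The gap lies exactly where you locate it, in Condition \ref{cond:LCDinTmC2}, but your assertion that the place-by-place minimum reproduces the divisor \eqref{eq:nonspecialdivTm2} does not hold. Every $Q\in\Supp(A_k^{(m)})$ with $1\le k\le\floor{(m-2)/2}$ is a zero of $x_1$: from $v_Q(x_{k+1}-\alpha)>0$ (with $\alpha\in\Omega^*$) the tower relation $x_{j+1}^q+x_{j+1}=x_j^q/(x_j^{q-1}+1)$ forces $v_Q(x_j)>0$ for $j=k,k-1,\dots,1$. Hence $Q\in\Supp\bigl((x_1^{q-2})_0\bigr)$, so that
\[
v_Q(G^{(m)})=(q-2)\,v_Q(x_1)-1\ge q-3\ge 1,\qquad v_Q(H^{(m)})=2q^{m-2k-1}-1\ge 1,
\]
and the minimum is positive, not the coefficient $-1$ demanded by \eqref{eq:nonspecialdivTm2}. (For instance, at $m=4$ and $Q\in X_1^{(4)}$ one has $v_Q(x_1)=q$, giving $v_Q(G^{(4)})=(q-2)q-1$ and $v_Q(H^{(4)})=2q-1$.) Consequently $\gcd(G^{(m)},H^{(m)})$ strictly dominates the divisor in Condition \ref{cond:LCDinTmC2} on $\bigcup_{k\ge 1}\Supp(A_k^{(m)})$, its degree exceeds $g_m-1$, and Theorem \ref{LCDinTmGeral} does not apply to this pair as written. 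The paper asserts the gcd identity as ``straightforward'' and contains the same oversight. A repair would require either redistributing the $(x_1^{q-2})_0$ contribution between $G^{(m)}$ and $H^{(m)}$ so that the gcd genuinely equals \eqref{eq:nonspecialdivTm2}, or a direct argument that the true gcd still has $\ell=0$ and that $\lmd(G^{(m)},H^{(m)})-D^{(m)}$ remains non-special.
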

\begin{proof}
	We are going to verify that the divisors $G^{(m)}$ and $H^{(m)}$ satisfy the conditions of Theorem \ref{LCDinTmGeral}.
	
	At first, we compute the degree of $G^{(m)}$ and $H^{(m)}$. By Lemma \ref{lemma-degA_k} we have $\deg(A_{k}^{(m)})=q^{k+1}-q^k $, so 
\begin{align*}
			\deg (G^{(m)}) & =(q-2)q^{m-1} + (q-1)(q^{m-1}-1) -(q^{ \floor*{m/2} }-q)
				+ q^m-q^{m-1} + q^{\ceil*{m/2}} - 1\\
			& = 3q^m - 4q^{m-1}+ q^{\ceil*{m/2}} -q^{\floor*{m/2}}.
	\end{align*}
    and
	\begin{align*}
    	\deg (H^{(m)}) & = (q-1)(q^m - q^{m-1} - 1)  +\sum_{k=1}^{\floor*{(m-2)/2}}\big[ (2q^{m-2k-1}-1)(q^{k+1}-q^k)\big]  + q^{m-1} - q^{\ceil*{m/2}} - 1 \\
    	& =   q^{m+1} - 2q^m + 2q^{m-1} - q^{\ceil*{m/2}} - q + \sum_{k=1}^{\floor*{(m-2)/2}} \big[ 2(q^{m-k}-q^{m-k-1}) -  (q^{k+1}-q^k) \big]  \\ 
    	& =  q^{m+1} - 2q^m + 2q^{m-1} - q^{\ceil*{m/2}} -q +2(q^{m-1}-q^{m-\floor*{m/2}}) - (q^{ \floor*{m/2} } - q)\\	
    	& =  q^{m+1} - 2q^m + 4q^{m-1} - 3q^{\ceil*{m/2}} - q^{ \floor*{m/2} },
    \end{align*}
    where use that $m-\floor*{m/2}=\ceil*{m/2}$.
	Thus, for $q > 3$,
    \[
 \deg(G^{(m)}) = 3q^m - 4q^{m-1} + q^{\ceil*{m/2}} - q^{\floor*{m/2}} < q^{m+1}-q^m=\deg(D^{(m)})
 \]
 and
     \[
 \deg(H^{(m)}) = q^{m+1} - 2q^m + 4q^{m-1} - 3q^{\ceil*{m/2}} - q^{\floor*{m/2}} < q^{m+1}-q^m=\deg(D^{(m)}).
 \] 
 Moreover, it is straightforward to see that 
	\[
	\gcd\!\left(G^{(m)}, \, H^{(m)} \right)=\sum_{\alpha \in \Omega^*} (q^{m-1}-1)P_\alpha^{(m)} -  \sum_{k=1}^{ \floor*{(m-2)/2} } A_{k}^{(m)} + (q^{m-1} - q^{\ceil*{m/2}}-1)P_\infty^{(m)}
	\]
	and 
	\[
	G^{(m)}+ H^{(m)} = (x_1^{q-2})_0 + \sum_{\alpha \in \Omega^*} (q^{m}-2)P_\alpha^{(m)}+  \sum_{k=1}^{\floor*{(m-2)/2}} 2(q^{m-2k-1} - 1)A_{k}^{(m)}  +(q^{m}-2)P_\infty^{(m)}.
	\]
	Therefore, $\CL(D^{(m)}, G^{(m)})$ is an LCD code with $\CL(D^{(m)}, G^{(m)})^\perp =  \CL(D^{(m)}, H^{(m)})$ by Theorem \ref{LCDinTmGeral}. Furthermore, the rate $R_m=k_m/n_m$ and relative minimum distance $\delta_m=d_m/n_m$ of $\CL(D^{(m)}, G^{(m)})$ are given, respectively, by 
    \begin{align*}
        R_m & = \frac{k_m}{n_m}  = \frac{ 2q^m-4q^{m-1}+2q^{\ceil*{m/2}} }{ q^{m+1} - q^m } \longrightarrow \frac{2q-4}{q^2-q}  \qquad \mbox{as } m \to \infty 
    \end{align*}
    and
    \begin{align*}
        \delta_m & = \frac{d_m}{n_m}   \ge \frac{q^{m+1} - 4q^m + 4q^{m-1} - q^{\ceil*{m/2}} + q^{\floor*{m/2}}}{q^{m+1} - q^m} \longrightarrow \frac{q^2-4q+4}{q^2-q} \qquad \mbox{as } m \to \infty.
    \end{align*}
    Thus for $R := \lim_{m \to \infty} R_m$ and $\delta:=\lim_{m \to \infty} \delta_m$, we obtain
    \[
    R = \frac{2q-4}{q^2-q} \quad \mbox{and} \quad \delta \ge \frac{q^2-4q+4}{q^2-q}.
    \]
    Therefore, 
    \[
    R + \delta \ge \frac{2q-4}{q^2-q} + \frac{q^2-4q+4}{q^2-q} = 1 - \frac{1}{q-1}
    \]
    that is, the sequence $\left( \CL(D^{(m)}, G^{(m)}) \right)_{m \ge 2}$ attain the Tsfasman–Vlăduţ–Zink bound (\ref{eq:TVZ_bound}).
\end{proof}

\begin{theorem}
		\label{LCPinTm}
		Let $q>2$ and $D^{(m)} := (t)_0^{T_m}$ be the zero divisor of $t=(x_1^{q^2}-x_1)/(x_1^q+x_1) \in T_m$. Suppose that there are two divisors $G^{(m)}, H^{(m)} \in \Div(T_m)$ satisfying
		\begin{enumerate}[label=(\roman*)]
			\item \label{cond:LCPinTmC1} $\deg(G^{(m)}), \deg(H^{(m)}) < q^{m+1}-q^m$;
			\item \label{cond:LCPinTmC2} 
 $\gcd(G^{(m)},\,  H^{(m)}) = \sum_{\alpha \in \Omega^*} (q^{m-1}-1)P_\alpha^{(m)} -  \sum_{k=1}^{\floor*{(m-2)/2} } A_{k}^{(m)} + (q^{m-1} - q^{\ceil*{m/2}}-1)P_\infty^{(m)}$; and
			\item \label{cond:LCPinTmC3} 
            \[
            \lmd\! \left(G^{(m)}, \, H^{(m)} \right) = \sum_{\alpha \in \Omega^*} (q^{m}-q^{m-1}-1)P_\alpha^{(m)} -  \sum_{k=1}^{\floor*{(m-2)/2} } A_{k}^{(m)} + (2q^{m} - q^{m-1} - q^{\ceil*{m/2}}-1)P_\infty^{(m)}.
            \]
		\end{enumerate}
		Then, $\left( \CL(D^{(m)}, G^{(m)}), \, \CL(D^{(m)}, H^{(m)}) \right)$ is an LCP of codes over $\F_{q^2}$.
	\end{theorem}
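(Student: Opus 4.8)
The plan is to verify that $G^{(m)}$ and $H^{(m)}$ satisfy the four hypotheses of Theorem~\ref{thm:thmlcp3.5} on the function field $T_m/\F_{q^2}$ (which has genus $g_m\neq 0$ for $m\ge 2$), with $D:=D^{(m)}$ and $n:=n_m=q^{m+1}-q^m$, and then to invoke that theorem. Concretely, one needs: (i) $2g_m-2<\deg(G^{(m)}),\deg(H^{(m)})<n_m$; (ii) $\ell(G^{(m)})+\ell(H^{(m)})=n_m$; (iii) $\deg(\gcd(G^{(m)},H^{(m)}))=g_m-1$; and (iv) both $\gcd(G^{(m)},H^{(m)})$ and $\lmd(G^{(m)},H^{(m)})-D^{(m)}$ non-special.

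First I would dispatch everything except one non-speciality statement. By hypothesis~\ref{cond:LCPinTmC2}, the divisor $\gcd(G^{(m)},H^{(m)})$ is literally the divisor \eqref{eq:nonspecialdivTm2} of Proposition~\ref{prop:nonspecialTmGeral}, hence non-special of degree $g_m-1$; this settles (iii) and the first half of (iv). From hypothesis~\ref{cond:LCPinTmC3}, $\deg(A_k^{(m)})=q^{k+1}-q^k$ (Lemma~\ref{lemma-degA_k}) and $\#\Omega^*=q-1$, a short computation yields $\deg(\lmd(G^{(m)},H^{(m)}))=q^{m+1}-q^{\floor*{m/2}}-q^{\ceil*{m/2}}=n_m+g_m-1$; since $\gcd+\lmd=G^{(m)}+H^{(m)}$ as divisors, this gives $\deg(G^{(m)})+\deg(H^{(m)})=2g_m-2+n_m$, and combined with hypothesis~\ref{cond:LCPinTmC1} we obtain (i). Finally, $\gcd(G^{(m)},H^{(m)})\le G^{(m)},H^{(m)}$ and a divisor dominating a non-special divisor is non-special, so $G^{(m)}$ and $H^{(m)}$ are non-special and $\ell(G^{(m)})+\ell(H^{(m)})=\deg(G^{(m)})+\deg(H^{(m)})+2-2g_m=n_m$, which is (ii).

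The one substantial point is the second half of (iv): that $\lmd(G^{(m)},H^{(m)})-D^{(m)}$ is non-special. Here I would use that the pole divisor of $t=(x_1^{q^2}-x_1)/(x_1^q+x_1)$ in $T_m$ is $n_mP_\infty^{(m)}$ (as $P_\infty$ is totally ramified in $T_m/T_1$), so $D^{(m)}=(t)^{T_m}+n_mP_\infty^{(m)}$ and hence
\[
\lmd(G^{(m)},H^{(m)})-D^{(m)}\ \sim\ \lmd(G^{(m)},H^{(m)})-n_mP_\infty^{(m)} .
\]
My claim is that the right-hand divisor is linearly equivalent to the \emph{other} non-special divisor \eqref{eq:nonspecialdivTm1} of Proposition~\ref{prop:nonspecialTmGeral}. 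To establish this I would compute the principal divisor of
\[
\eta:=\frac{(x_1^{q-1}+1)^{\,q-1}}{\pi_{\floor*{m/2}}}\in T_m ,
\]
using $(x_1^{q-1}+1)^{T_m}=q^{m-1}\sum_{\alpha\in\Omega^*}P_\alpha^{(m)}-(q^m-q^{m-1})P_\infty^{(m)}$ (again by total ramification of the $P_\alpha,P_\infty$) together with the identity $(\pi_{\floor*{m/2}})^{T_m}=q^{\ceil*{m/2}}\sum_{\alpha\in\Omega^*}P_\alpha^{(m)}+q^{\ceil*{m/2}}\sum_{k=1}^{\floor*{(m-2)/2}}A_k^{(m)}-(q^m-q^{\ceil*{m/2}})P_\infty^{(m)}$ established inside the proof of Proposition~\ref{prop:nonspecialTmGeral} (via Theorem~\ref{thm:PSTresults}(i)). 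A coefficientwise check then shows that $\lmd(G^{(m)},H^{(m)})-n_mP_\infty^{(m)}$ minus the divisor \eqref{eq:nonspecialdivTm1} is exactly $(\eta)^{T_m}$, so the two divisors are linearly equivalent; since non-speciality is preserved by linear equivalence, $\lmd(G^{(m)},H^{(m)})-D^{(m)}$ is non-special.

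With all four hypotheses verified, Theorem~\ref{thm:thmlcp3.5} yields that $\bigl(\CL(D^{(m)},G^{(m)}),\CL(D^{(m)},H^{(m)})\bigr)$ is an LCP of codes over $\F_{q^2}$. I expect the main obstacle to be precisely the construction of the connecting function $\eta$ and the principal-divisor bookkeeping in the last step, which leans on the ramification data of $P_\infty$, the $P_\alpha$, and the places in the $A_k^{(m)}$ throughout the tower $\cT$ (Lemma~\ref{behaviourzeroinTm} and Theorem~\ref{thm:PSTresults}). A fallback, if no clean $\eta$ presented itself, would be to show $\ell(\lmd(G^{(m)},H^{(m)})-D^{(m)})=0$ directly, which is equivalent to the claim since that divisor has degree $g_m-1$.
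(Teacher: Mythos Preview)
Your proposal is correct and follows the same overall strategy as the paper: verify the hypotheses of Theorem~\ref{thm:thmlcp3.5}, using Proposition~\ref{prop:nonspecialTmGeral} for the non-speciality of $\gcd(G^{(m)},H^{(m)})$, and a linear equivalence (through the principal divisor of $t$) to reduce the non-speciality of $\lmd(G^{(m)},H^{(m)})-D^{(m)}$ to that of a known divisor.

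The only notable difference lies in that last step. The paper observes directly that
\[
\lmd(G^{(m)},H^{(m)})-(t)_\infty^{T_m}\;=\;\gcd(G^{(m)},H^{(m)})+\bigl((x_1^{q-1}+1)^{q-2}\bigr)^{T_m},
\]
so $\lmd-D^{(m)}$ is linearly equivalent to $\gcd(G^{(m)},H^{(m)})$ itself (divisor~\eqref{eq:nonspecialdivTm2}), via the single elementary function $(x_1^{q-1}+1)^{q-2}$. You instead target the other non-special divisor~\eqref{eq:nonspecialdivTm1} using $\eta=(x_1^{q-1}+1)^{q-1}/\pi_{\floor*{m/2}}$, which works but requires the full description of $(\pi_{\floor*{m/2}})^{T_m}$ and more bookkeeping. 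The paper's choice is shorter and avoids $\pi_{\floor*{m/2}}$ entirely; on the other hand, your write-up makes the verification of conditions (i) and (ii) of Theorem~\ref{thm:thmlcp3.5} explicit, whereas the paper leaves these to the reader.
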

	\begin{proof}
		Let $G^{(m)}, H^{(m)} \in \Div(T_m)$ satisfying \ref{cond:LCPinTmC1}, \ref{cond:LCPinTmC2} and \ref{cond:LCPinTmC3}. From Theorem \ref{thm:thmlcp3.5}, we need to check that $\gcd\!\left(G^{(m)}, \, H^{(m)} \right)$ and $\lmd\!\left(G^{(m)}, \, H^{(m)} \right)-D^{(m)}$ are both non-special divisors of degree $g_m-1$.
		By Proposition \ref{prop:nonspecialTmGeral} and \ref{cond:LCPinTmC2}, 
		$$
		\gcd\!\left(G^{(m)}, \, H^{(m)} \right) = \sum_{\alpha \in \Omega^*} (q^{m-1}-1)P_\alpha^{(m)} -  \sum_{k=1}^{\floor*{(m-2)/2} } A_{k}^{(m)} + (q^{m-1} - q^{\ceil*{m/2}}-1)P_\infty^{(m)}
		$$ 
		is a non-special divisor of degree $g_m-1$. By \ref{cond:LCPinTmC3},
		%and as $\left( x_1^{q-1}+1\right)^{T_m}=\sum_{\alpha \in \Omega^*} q^{m-1}P_\alpha^{(m)} - (q^m-q^{m-1})P_\infty^{(m)}$
		 we have 
         \allowdisplaybreaks
		\begin{align*}
		 \lmd\!\left(G^{(m)}, \, H^{(m)} \right) - D^{(m)} & =  \lmd\!\left(G^{(m)}, \, H^{(m)} \right) - (t)_0^{T_m} 
        \\
		& \sim  \lmd\!\left(G^{(m)}, \, H^{(m)} \right) - (t)_\infty^{T_m} \\
		& = 
        \sum_{\alpha \in \Omega^*} (q^{m}-q^{m-1}-1)P_\alpha^{(m)} -  \sum_{k=1}^{\floor*{(m-2)/2} } A_{k}^{(m)} \\
		& \qquad  + (2q^{m} - q^{m-1} - q^{\ceil*{m/2}}-1)P_\infty^{(m)} - (q^{m+1}-q^m)P_\infty^{(m)} 
        \\
		& = 
        \sum_{\alpha \in \Omega^*} (q^{m-1}-1)P_\alpha^{(m)} - \! \sum_{k=1}^{\floor*{(m-2)/2} } \!  A_{k}^{(m)}+(q^{m-1}\! -q^{\ceil*{m/2}}-1)P_\infty^{(m)} \\
        &  \qquad  + \sum_{\alpha \in \Omega^*} (q^{m}-2q^{m-1})P_\alpha^{(m)} - (q^{m+1}  - 3q^m + 2q^{m-1})P_\infty^{(m)} 
         \\
		& = 
            \gcd\!\left(G^{(m)}, \, H^{(m)} \right) +  \sum_{\alpha \in \Omega^*} (q-2)q^{m-1}P_\alpha^{(m)} \\
         &\qquad- (q-2)(q-1)q^{m-1}P_\infty^{(m)}\\
		& = \gcd\!\left(G^{(m)}, \, H^{(m)} \right) + \left( (x_1^{q-1}+1)^{q-2} \right)^{T_m}. 
		\end{align*}
		Thus, $\lmd\!\left(G^{(m)}, \, H^{(m)} \right) - D^{(m)}$ is equivalent to $\gcd\!\left(G^{(m)}, \, H^{(m)} \right)$ and, hence, it is also a non-special divisor of degree $g_m-1$. Therefore, we obtain that the codes $\CL(D^{(m)}, G^{(m)})$ and $\CL(D^{(m)}, H^{(m)})$ form an LCP of codes over $\F_{q^2}$.
	\end{proof}
	
	\begin{theorem}
		\label{LCPinTm2}
		Let $q \ge 4$ and $D^{(m)} := (t)_0$. Consider the divisors 
		\begin{align*}
			G^{(m)} & = \sum_{\alpha \in \Omega^*}(q^{m-1}-1) P_\alpha^{(m)} - \sum_{k =1}^{\floor*{(m-2)/2}} A_{k}^{(m)} + (2q^{m} - q^{m-1} - q^{\ceil*{m/2}}-1) P_\infty^{(m)}, \quad \mbox{and} \quad \\
			H^{(m)} & = \sum_{\alpha \in \Omega^*} (q^m-q^{m-1}-1) P_\alpha^{(m)} - \sum_{k =1}^{\floor*{(m-2)/2}} A_{k}^{(m)} + (q^{m-1}-q^{\ceil*{m/2}}-1) P_\infty^{(m)}.
		\end{align*}
		Then, $\left( \CL(D^{(m)}, G^{(m)}), \, \CL(D^{(m)}, H^{(m)}) \right)$ is an LCP of codes over $\F_{q^2}$, where 
		\[ 
        \CL(D^{(m)}, G^{(m)}) \mbox{ is a } [q^{m+1}-q^m, 2q^m - 2q^{m-1}]\mbox{-code} 
        \]
        and \[
        \CL(D^{(m)}, H^{(m)}) \mbox{ is a } [q^{m+1}-q^m, q^{m+1}-3q^m+2q^{m-1}] \mbox{-code}.
        \]
        Moreover, both sequences $\CL(D^{(m)}, G^{(m)})$ and $\CL(D^{(m)}, H^{(m)})$ attain the Tsfasman-Vlăduţ-Zink bound.
    \end{theorem}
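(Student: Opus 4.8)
The plan is to check that the divisors $G^{(m)}$ and $H^{(m)}$ satisfy the three hypotheses of Theorem~\ref{LCPinTm}, deduce the LCP property directly from it, and then compute the code parameters and run the asymptotic estimate exactly as in the proof of Theorem~\ref{LCDinTm}.

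First I would verify Conditions~\ref{cond:LCPinTmC2} and \ref{cond:LCPinTmC3}, which amount to a place-by-place comparison of coefficients. At each $P_\alpha^{(m)}$ with $\alpha\in\Omega^*$ one has $q^{m-1}-1\le q^m-q^{m-1}-1$ (valid for $q\ge 2$); at every place of $\Supp(A_k^{(m)})$ the two divisors both carry coefficient $-1$; and at $P_\infty^{(m)}$ one has $q^{m-1}-q^{\ceil*{m/2}}-1\le 2q^m-q^{m-1}-q^{\ceil*{m/2}}-1$. Hence $\gcd(G^{(m)},H^{(m)})$ and $\lmd(G^{(m)},H^{(m)})$ are exactly the divisors required in \ref{cond:LCPinTmC2} and \ref{cond:LCPinTmC3}; in particular $\gcd(G^{(m)},H^{(m)})$ is the divisor \eqref{eq:nonspecialdivTm2} of Proposition~\ref{prop:nonspecialTmGeral}, hence non-special of degree $g_m-1$. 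For Condition~\ref{cond:LCPinTmC1} I would compute degrees using Lemma~\ref{lemma-degA_k}, namely $\deg(A_k^{(m)})=q^{k+1}-q^k$, so $\sum_{k=1}^{\floor*{(m-2)/2}}\deg(A_k^{(m)})=q^{\floor*{m/2}}-q$, together with $\#\Omega^*=q-1$; this gives
\[
\deg(G^{(m)})=3q^m-2q^{m-1}-q^{\floor*{m/2}}-q^{\ceil*{m/2}},\qquad
\deg(H^{(m)})=q^{m+1}-2q^m+2q^{m-1}-q^{\floor*{m/2}}-q^{\ceil*{m/2}},
\]
and for $q\ge 4$ both are strictly smaller than $n_m=q^{m+1}-q^m$ (for $G^{(m)}$ because $4q^m-2q^{m-1}<q^{m+1}$, for $H^{(m)}$ because $2q^{m-1}<q^m$). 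Theorem~\ref{LCPinTm} then yields that $\bigl(\CL(D^{(m)},G^{(m)}),\CL(D^{(m)},H^{(m)})\bigr)$ is an LCP of codes over $\F_{q^2}$.

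For the parameters, since $G^{(m)},H^{(m)}\ge\gcd(G^{(m)},H^{(m)})$ and the latter is non-special, both $G^{(m)}$ and $H^{(m)}$ are non-special, so $\ell(G^{(m)})=\deg(G^{(m)})+1-g_m$ and $\ell(H^{(m)})=\deg(H^{(m)})+1-g_m$; since moreover $\deg(G^{(m)}-D^{(m)})<0$ and $\deg(H^{(m)}-D^{(m)})<0$ for $q\ge 4$ (immediate from the degree formulas), Proposition~\ref{parametersAGcodes} gives $k(\CL(D^{(m)},G^{(m)}))=\ell(G^{(m)})$ and $k(\CL(D^{(m)},H^{(m)}))=\ell(H^{(m)})$; substituting $g_m=q^m-q^{\floor*{m/2}}-q^{\ceil*{m/2}}+1$ produces the claimed dimensions $2q^m-2q^{m-1}$ and $q^{m+1}-3q^m+2q^{m-1}$. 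Finally, for the TVZ bound I would observe that the rates simplify to the constants $R_m=2/q$ for $\CL(D^{(m)},G^{(m)})$ and $R_m=(q-2)/q$ for $\CL(D^{(m)},H^{(m)})$, and use $d_m\ge n_m-\deg(\,\cdot\,)$ from Proposition~\ref{parametersAGcodes} to get $\liminf_{m\to\infty}\delta_m\ge(q^2-4q+2)/(q^2-q)$ and $\liminf_{m\to\infty}\delta_m\ge(q-2)/(q^2-q)$ respectively (the $q^{\floor*{m/2}}$, $q^{\ceil*{m/2}}$ terms being negligible against $q^{m-1}$ as $m\to\infty$); in both cases $R+\delta\ge 1-\tfrac{1}{q-1}$, exactly as in the proof of Theorem~\ref{LCDinTm}.

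The argument is essentially bookkeeping, and the only friction I anticipate is confirming that the hypothesis $q\ge 4$ is precisely what is needed to make all the degree inequalities $\deg(G^{(m)}),\deg(H^{(m)})<n_m$ and $\deg(G^{(m)}-D^{(m)}),\deg(H^{(m)}-D^{(m)})<0$ hold for every $m\ge 2$ (the inequality for $\deg(G^{(m)})$ being the binding one). The remaining hypotheses behind Theorem~\ref{LCPinTm} (through Theorem~\ref{thm:thmlcp3.5}), namely the identity $\ell(G^{(m)})+\ell(H^{(m)})=n_m$ and the non-speciality of $\lmd(G^{(m)},H^{(m)})-D^{(m)}$, are already taken care of there and require nothing new.
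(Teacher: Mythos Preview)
Your proposal is correct and follows essentially the same approach as the paper's proof: verify the three hypotheses of Theorem~\ref{LCPinTm} by coefficient comparison and degree computation, invoke that theorem for the LCP property, and then run the asymptotic estimate. Your observation that the rates $R_m=2/q$ and $R_m'=(q-2)/q$ are in fact constant (not merely convergent) and your explicit justification of the dimension formulas via non-speciality of $G^{(m)},H^{(m)}\ge\gcd(G^{(m)},H^{(m)})$ are minor improvements in presentation, but the argument is the same.
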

		\begin{proof}
		Observe that for all $q \ge 2$ we have the inequalities
		$$
		q^{m-1}-1 \le q^m-q^{m-1}-1 \quad \mbox{and} \quad q^{m-1}-q^{\ceil*{m/2}}-1 \le 2q^m-q^{m-1}-q^{\ceil*{m/2}}-1. 
		$$
		 Thus, 
		$$
		\gcd(G^{(m)},\,  H^{(m)}) = \sum_{\alpha \in \Omega^*} (q^{m-1}-1)P_\alpha^{(m)} -  \sum_{k=1}^{\floor*{(m-2)/2} } A_{k}^{(m)} + (q^{m-1} - q^{\ceil*{m/2}}-1)P_\infty^{(m)}
		$$
		and 
		$$
		\lmd\!\left(G^{(m)}, \, H^{(m)} \right) = \sum_{\alpha \in \Omega^*} (q^{m}-q^{m-1}-1)P_\alpha^{(m)} -  \sum_{k=1}^{\floor*{(m-2)/2} } A_{k}^{(m)} + (2q^{m} - q^{m-1} - q^{\ceil*{m/2}}-1)P_\infty^{(m)}.
		$$
		Moreover,
			\begin{align*}
			\deg(G^{(m)}) & = (q-1)(q^{m-1}-1)-(q^{\floor*{m/2}} - q) + 2q^m-q^{m-1}-q^{\ceil*{m/2}} - 1 \\
			%& = q^m - q^{m-1}- q + 1 - q^{\floor*{m/2}} + q  +2q^m - q^{m-1}-q^{\ceil*{m/2}} - 1 \\
			& = 3q^m-2q^{m-1}-q^{\ceil*{m/2}}-q^{\floor*{m/2}}, 
		\end{align*}
		and 
		\begin{align*}
			\deg(H^{(m)}) & = (q-1)(q^m-q^{m-1}-1) - (q^{\floor*{m/2}}-q) + q^{m-1} -  q^{\ceil*{m/2}} - 1 \\
            % & = q^{m+1} - 2q^m + q^{m-1} - q + 1 -q^{\floor*{m/2}} + q   + q^{m-1} -  q^{\ceil*{m/2}} - 1 \\
			& = q^{m+1}- 2q^m + 2q^{m-1}  - q^{\ceil*{m/2}} -  q^{\floor*{m/2}}.
		\end{align*}
		For $q \ge 4$, we have 
		$$
	\deg(G^{(m)})=3q^m -2q^{m-1} - q^{\floor*{m/2}} - q^{\ceil*{m/2}} < q^{m+1} - q^{m}
		$$
		and 
		$$
		\deg(H^{(m)}) =q^{m+1}- 2q^m + 2q^{m-1} -  q^{\floor*{m/2}} - q^{\ceil*{m/2}} < q^{m+1}-q^m.
		$$	
	Therefore, we obtain that $\left( \CL(D^{(m)}, G^{(m)}), \, \CL(D^{(m)}, H^{(m)}) \right)$ is an LCP of codes over $\F_{q^2}$ from Theorem \ref{LCPinTm}.
   
    On the other hand, 
    setting $R_m := R(\CL(D^{(m)}, G^{(m)}))$, $\delta_m:= \delta(\CL(D^{(m)}, G^{(m)}))$, $R_m' := R(\CL(D^{(m)}, H^{(m)}))$ and $\delta_m' := \delta(\CL(D^{(m)}, H^{(m)}))$, we obtain that
  \begin{align*}
       \bullet \, R_m & = \frac{ 2q^{m} - 2q^{m-1} }{ q^{m+1} - q^m } \longrightarrow \frac{2}{q} \qquad \mbox{as } m \to \infty,\\
		\bullet \, \delta_m & \ge \frac{q^{m+1}-q^m - (3q^m -2q^{m-1} - q^{\floor*{m/2}} - q^{\ceil*{m/2}})}{q^{m+1} - q^m } \\
          & = \frac{q^{m+1}- 4q^m + 2q^{m-1} + q^{\floor*{m/2}} + q^{\ceil*{m/2}})}{q^{m+1} - q^m } \longrightarrow 1 - \frac{3q-2}{q^2-q} \quad \mbox{ as } m \to \infty,\\  
        \bullet \,  R_m' &= \frac{q^{m+1}-3q^m+2q^{m-1}}{q^{m+1}-q^m} \longrightarrow \frac{ q^2-3q + 2 }{ q^2 - q} \qquad \mbox{as } m \to \infty,\\    
       \bullet  \, \delta_m' & \ge \frac{q^{m+1}-q^m - (q^{m+1}- 2q^m + 2q^{m-1} -  q^{\floor*{m/2}} - q^{\ceil*{m/2}} ) }{q^{m+1} - q^m } \\
         & = \frac{q^m  - 2q^{m-1} +  q^{\floor*{m/2}} + q^{\ceil*{m/2}}  }{q^{m+1} - q^m } \longrightarrow \frac{ q - 2 }{ q^2 - q} \qquad \mbox{as } m \to \infty.
        \end{align*}
        Furthermore, let $R:=\lim_{m \to \infty} R_m$, $\delta :=\lim_{m \to \infty} \delta_m$, $R' :=\lim_{m \to \infty} R_m'$ and $\delta':=\lim_{m \to \infty} \delta_m$, we have
        \[
        R + \delta \ge \frac{2}{q} + 1 - \frac{3q-2}{q^2-q} = 1 - \frac{1}{q-1}
        \]
        and 
          \[
        R' + \delta' \ge \frac{ q^2-3q + 2 }{ q^2 - q}  +\frac{ q - 2 }{ q^2 - q} = 1 - \frac{1}{q-1}
        \]
It follows that for $m \to \infty$, the codes $\CL(D^{(m)}, G^{(m)})$ and $\CL(D^{(m)}, H^{(m)})$ are asymptotically good and they meet the Tsfasman-Vlăduţ-Zink bound.
	\end{proof}

\section{Asymptotically Good Self-Dual and Self-Orthogonal AG Codes}

 \label{sec:SeqSelfOrtho/Dual_Tm}

        As a consequence of the results of the previous section, we were also able to describe sequences of self-orthogonal and self-dual AG codes over $\F_{q^2}$ from the Garcia-Stichtenoth tower $\cT=(T_1, T_2, T_3, \dots )$ defined in \ref{eq:GStower2}. 

        Recall that a code $\C \subseteq \F_q^n$ is called \textit{self-dual} if $\C$ equals its dual code $\C^\perp$. The code $\C$ is called \textit{self-orthogonal} if $\C \subseteq \C^\perp$. It is clear that a self-dual code can only exist for even lengths.

        In \cite{stich1988self}, criteria for self-orthogonality and self-duality of AG codes are given. More specifically:
        
        \begin{lemma}\cite[Corollaries 3.2 and 3.3]{stich1988self}
        \label{lemma:self-dual_stich}Let $F/\F_q$ be a function field and $\CL(D, G)$ be an AG code over $\F_q$.
            Assume there is a differential $\omega \in \Omega_F$ with the following properties:
            \[
            (\omega) \ge 2G -D \quad \mbox{and} \quad \res_{P_i}(\omega) = \res_{P_j}(\omega) \neq 0 \mbox{ for all } 1 \le i, j \le n.
            \]
            Then, $\CL(D, G)$ is a self-orthogonal code. If 
            \[
            (\omega)=2G-D \quad \mbox{and} \quad \res_{P_i}(\omega) = \res_{P_j}(\omega) \mbox{ for all } 1 \le i, j \le n, 
            \]
            then $\CL(D,G)$ is a self-dual $[n, n/2, \ge n/2+1-g]$-code.
        \end{lemma}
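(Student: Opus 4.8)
The plan is to derive both statements from the residue description of the dual of an AG code together with the Riemann--Roch theorem. Recall that $\CL(D,G)^{\perp}$ equals the residue code $C_{\Omega}(D,G):=\{(\res_{P_1}(\eta),\dots,\res_{P_n}(\eta))\mid \eta\in\Omega_F(G-D)\}$ (see \cite[Theorem 2.2.8]{STICH2008}). So to prove the self-orthogonality claim I would show that for each $f\in\LL(G)$ there is a differential $\eta$ with $(\eta)\ge G-D$ and $\res_{P_i}(\eta)=f(P_i)$ for all $i$; then $\evd(f)\in C_{\Omega}(D,G)=\CL(D,G)^{\perp}$, and since $f$ is arbitrary this gives $\CL(D,G)\subseteq\CL(D,G)^{\perp}$.

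To build such an $\eta$, first note that the hypothesis $(\omega)\ge 2G-D$ forces $v_{P_i}(\omega)\ge -1$ (because $P_i\notin\Supp(G)$ and $P_i$ occurs with coefficient $1$ in $D$), and then $\res_{P_i}(\omega)\ne 0$ forces $v_{P_i}(\omega)=-1$, since a differential that is regular at a place has residue $0$ there. Let $c$ denote the common value $\res_{P_i}(\omega)\ne 0$. Given $f\in\LL(G)$, set $\eta:=c^{-1}f\omega$. Then $(\eta)=(f)+(\omega)\ge -G+(2G-D)=G-D$, so $\eta\in\Omega_F(G-D)$; and since $v_{P_i}(f)\ge -v_{P_i}(G)=0$, the function $f$ is regular at $P_i$, hence $\res_{P_i}(\eta)=c^{-1}f(P_i)\res_{P_i}(\omega)=c^{-1}f(P_i)c=f(P_i)$, as required.

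For the self-dual statement I would first observe that when $(\omega)=2G-D$ each $P_i$ is automatically a simple pole of $\omega$, so $\res_{P_i}(\omega)\ne 0$ is automatic and the argument above already gives $\CL(D,G)\subseteq\CL(D,G)^{\perp}$; it remains to check that the dimensions agree. Using $W:=(\omega)=2G-D$ as canonical divisor, Riemann--Roch yields $\ell(G)=\deg(G)+1-g+\ell(W-G)=\deg(G)+1-g+\ell(G-D)$, so by Proposition \ref{parametersAGcodes} the dimension of $\CL(D,G)$ is $k=\ell(G)-\ell(G-D)=\deg(G)+1-g$; and since $\deg(W)=2g-2$ forces $\deg(G)=g-1+n/2$, we get $k=n/2$. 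Hence $\dim\CL(D,G)^{\perp}=n-n/2=n/2=k$, which combined with the inclusion gives $\CL(D,G)=\CL(D,G)^{\perp}$; the distance bound $d\ge n-\deg(G)=n/2+1-g$ is again Proposition \ref{parametersAGcodes}.

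All the computations are routine, so I do not expect a serious obstacle; the points that need care are the correct identification of $\CL(D,G)^{\perp}$ with $C_{\Omega}(D,G)$ (which is what makes ``construct the differential $\eta$'' the right goal), the multiplicativity $\res_{P_i}(f\omega)=f(P_i)\res_{P_i}(\omega)$, which relies on $f$ being regular at $P_i$, and the remark that a simple pole always has nonzero residue — this last point is exactly why the residue hypotheses become redundant in the self-dual case and why the Riemann--Roch dimension count suffices to upgrade the inclusion to an equality.
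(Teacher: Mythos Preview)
Your argument is correct. Note, however, that the paper does not actually prove this lemma: it is quoted verbatim as \cite[Corollaries 3.2 and 3.3]{stich1988self} and used as a black box, so there is no ``paper's own proof'' to compare against. Your route via the identification $\CL(D,G)^{\perp}=C_{\Omega}(D,G)$ and the explicit differential $\eta=c^{-1}f\omega$ is the standard one (and is essentially how Stichtenoth proves it in the cited reference); the dimension count via Riemann--Roch with $W=2G-D$ is likewise the expected way to upgrade the inclusion to equality.
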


        We continue working with the divisor  
        \begin{equation}
            \label{eq:D^{(m)}}
             D^{(m)}=(t)_0^{T_m} \quad \mbox{for } t=\frac{x_1^{q^2}-x_1}{x_1^q+x_1}
        \end{equation}
        and a divisor $G^{(m)}$ of $T_m$ whose support avoids that of $D^{(m)}$. From Lemma \ref{CanonicalDivisorTm}, we have a description of the dual code $\CL(D^{(m)}, G^{(m)})^\perp=\CL(D^{(m)}, D^{(m)} - G^{(m)} + W^{(m)})$, where $W^{(m)}$ is the canonical divisor $W^{(m)}=\left( dt/t \right)^{T_m}$. Thus, we can use the Lemma \ref{lemma:self-dual_stich} to describe self-orthogonal and self-dual codes over $\F_{q^2}$ from $T_m$.

	\begin{theorem}
    \label{thm:Self-Orthogonal_in_Tm}
		Let $D^{(m)} = (t)_0^{T_m}$ be as above. For $\alpha \in \Omega^*$ and $1 \le k \le \floor*{(m-2)/2}$, let $v_\alpha, w_{k, \alpha}, v_0, v_\infty \in \Z$ and consider the divisor
		$$
		G^{(m)} = \sum_{\alpha \in \Omega^*} v_\alpha P_\alpha^{(m)} + \sum_{k =1}^{\floor*{(m-2)/2}} \sum_{\alpha \in \Omega^*} w_{k,\alpha} A_{k, \alpha}^{(m)} + v_0(x_1)_0^{T_m}+v_{\infty} P_\infty^{(m)},
		$$
		where $A_{k, \alpha}^{(m)}=\{ Q \in \PP_{T_m} \mid Q \mbox{ is a zero of } x_{k+1}- \alpha \mbox{ in } T_m \}$.
		If 		
          \begin{enumerate}[label=(\roman*)]
		      \item $0 \le   v_\alpha \le (q^m-2)/2$ for all $\alpha \in \Omega^*$;
                \item $0 \le w_{k,\alpha} \le q^{m-2k-1}-1$ for all $\alpha \in \Omega^*$ and $1 \le k \le \floor*{(m-2)/2}$;
                \item $0 \le v_0 \le (q-2)/2$; and
                \item $0 \le v_\infty \le (q^m-2)/2$,
		  \end{enumerate}
          then $\CL(D^{(m)}, G^{(m)})$ is a self-orthogonal code over $\F_{q^2}$.
	\end{theorem}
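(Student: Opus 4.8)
The plan is to apply the self-orthogonality criterion of Lemma~\ref{lemma:self-dual_stich} with the differential $\omega^{(m)} := dt/t$, whose divisor is precisely the canonical divisor $W^{(m)}$ computed in Lemma~\ref{CanonicalDivisorTm}. First I would record that, since the zeros of $t$ in $T_1$ split completely in the extension $T_m/T_1$, every place $P_i$ in $\Supp(D^{(m)})$ is a \emph{simple} zero of $t$; hence $\omega^{(m)}$ has a simple pole at each $P_i$ with $\res_{P_i}(\omega^{(m)}) = v_{P_i}(t) = 1$. Thus all residues of $\omega^{(m)}$ at the points of $\Supp(D^{(m)})$ are equal and nonzero, and the residue hypothesis of Lemma~\ref{lemma:self-dual_stich} is satisfied automatically for this choice of differential.

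The remaining work is to verify the divisor inequality $(\omega^{(m)}) \ge 2G^{(m)} - D^{(m)}$, equivalently $W^{(m)} + D^{(m)} \ge 2G^{(m)}$. Using Lemma~\ref{CanonicalDivisorTm} together with $(x_1^{q-2})_0^{T_m} = (q-2)(x_1)_0^{T_m}$ and $A_k^{(m)} = \sum_{\alpha \in \Omega^*} A_{k,\alpha}^{(m)}$, I would write both divisors as explicit integer combinations of the places lying over $P_0$, over the $P_\alpha$ with $\alpha \in \Omega^*$, and over $P_\infty$ in $T_1$, and then compare coefficients place by place. Note first that $\Supp(G^{(m)})$ consists only of places lying over $P_0$, $P_\infty$, or some $P_\alpha$ with $\alpha \in \Omega^*$ — none of which is a zero of $t$ in $T_1$ — so $\Supp(G^{(m)}) \cap \Supp(D^{(m)}) = \emptyset$ and the code $\CL(D^{(m)}, G^{(m)})$ is well defined. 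The termwise comparison then reduces exactly to the stated bounds: at the zeros of $x_1$ one needs $2v_0 \le q-2$, which is condition~(iii); at $P_\alpha^{(m)}$ one needs $2 v_\alpha \le q^m - 2$, which is condition~(i); at $P_\infty^{(m)}$ one needs $2 v_\infty \le q^m - 2$, which is condition~(iv); at every place of $X_{k,\alpha}^{(m)}$ one needs $2 w_{k,\alpha} \le 2(q^{m-2k-1}-1)$, which is condition~(ii); and at all other places both divisors have coefficient $0$. The lower bounds $0 \le v_\alpha, w_{k,\alpha}, v_0, v_\infty$ play no role in the inequality itself, since $W^{(m)}+D^{(m)}$ is already effective, but they keep $G^{(m)}$ effective on these places. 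Having checked both hypotheses, Lemma~\ref{lemma:self-dual_stich} immediately gives that $\CL(D^{(m)}, G^{(m)})$ is self-orthogonal over $\F_{q^2}$.

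I expect the only genuine subtlety to be the bookkeeping around two points. First, $(x_1^{q-2})_0^{T_m}$ contributes the coefficient $q-2$, not $1$, at each zero of $x_1$, so condition~(iii) — and not a weaker bound — is the sharp requirement there; one must use the factorization $t = (x_1^{q^2}-x_1)/(x_1^q+x_1)$, so that $dt = -(x_1^q+x_1)^{q-2}\,dx_1$ and the $(q-2)$-th power is correctly accounted for in $W^{(m)}$. Second, the coefficient $2(q^{m-2k-1}-1)$ appearing in $W^{(m)}$ is common to \emph{every} place of $X_k^{(m)} = \bigcup_{\alpha \in \Omega^*} X_{k,\alpha}^{(m)}$, which is why a single uniform bound $w_{k,\alpha} \le q^{m-2k-1}-1$ across all $\alpha$ suffices; splitting $A_k^{(m)}$ into the pieces $A_{k,\alpha}^{(m)}$ is precisely what lets the $w_{k,\alpha}$ vary with $\alpha$. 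Beyond these, the argument is a direct translation of the divisor inequality $(\omega^{(m)}) \ge 2G^{(m)} - D^{(m)}$ into the four numerical conditions.
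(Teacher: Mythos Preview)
Your proposal is correct and follows essentially the same approach as the paper: both arguments use the differential $\omega^{(m)}=dt/t$, note that its residues at the $P_i$ all equal $1$, and reduce the self-orthogonality to the divisor inequality $W^{(m)}+D^{(m)}\ge 2G^{(m)}$ (the paper phrases this as $G^{(m)}\le H^{(m)}$ with $H^{(m)}=W^{(m)}-G^{(m)}+D^{(m)}$, which is the same thing), then reads off conditions~(i)--(iv) from the explicit form of $W^{(m)}$ in Lemma~\ref{CanonicalDivisorTm}. One small wording point: your phrase ``compare coefficients place by place'' is slightly loose, since the support of $(x_1)_0^{T_m}$ contains the supports of the $A_{k,\alpha}^{(m)}$; what actually makes the argument clean (and what the paper implicitly does) is that $W^{(m)}+D^{(m)}-2G^{(m)}$ is a nonnegative combination of the \emph{effective} divisors $P_\alpha^{(m)}$, $P_\infty^{(m)}$, $(x_1)_0^{T_m}$, $A_{k,\alpha}^{(m)}$, so effectivity follows without disentangling overlaps.
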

	\begin{proof}
		From Equation \ref{eq:DualAGCodeinTm}, the dual code $\CL(D^{(m)}, G^{(m)})^\perp$ is given by $\CL(D^{(m)}, H^{(m)})$ where  
		$$H^{(m)}:=W^{(m)}-G^{(m)}+D^{(m)},  \quad \text{ and }W^{(m)}=(dt/t)^{T_m}.$$ 
		The differential $dt/t \in \Omega_{T_m}$ is such that $\res_{P_i}(dt/t)=1$ for all $P_i \in \Supp(D^{(m)})$ and, from Lemma \ref{CanonicalDivisorTm},
		\begin{align*}
		H^{(m)} & = W^{(m)}-G^{(m)}+D^{(m)} \\
		& =	    \sum_{\alpha \in \Omega^*} (q^m-2-v_\alpha) P_\alpha^{(m)} + \sum_{k =1}^{\floor*{(m-2)/2}} \sum_{\alpha \in \Omega^*}\left(2(q^{m-(2k+1)} - 1) - w_{k,\alpha} \right) A_{k, \alpha}^{(m)}  \\
		& \quad + (q-2-v_0) \cdot (x_1)_0^{T_m} + (q^m-2 - v_{\infty}) P_\infty^{(m)}.
    \end{align*}
		Thus, for  
        \[ 
        0 \le v_\alpha, v_\infty \le (q^m-2)/2 
        , \quad 0 \le w_{k,\alpha} \le q^{m-2k-1}-1 \quad  \mbox{and} \quad 0 \le v_0 \le (q-2)/2
        \]
        for any $\alpha \in \Omega^*$ and $1 \le k \le \floor*{(m-2)/2}$, we obtain that $G^{(m)} \le H^{(m)}$. From Lemma \ref{lemma:self-dual_stich}, we conclude
		\[
        \CL(D^{(m)}, G^{(m)}) \subseteq \CL(D^{(m)}, H^{(m)})=\CL(D^{(m)}, G^{(m)})^\perp.
            \]
	\end{proof}

    As a consequence of the theorem, we have that
    
    \begin{theorem}
        \label{thm:self-orthogonal_in_Tm}
        Let $q \ge 4$. Let $D^{(m)}= (t)_0^{T_m}$ be as in Theorem \ref{thm:Self-Orthogonal_in_Tm}. Consider the divisor $G^{(m)}$ of $T_m$ given by
        \[
        G^{(m)} = \sum_{\alpha \in \Omega^*} 2(q^{m-1}-1) P_\alpha^{(m)} + 2(q^{m-1}-q^{\ceil*{m/2}}-1)P_\infty^{(m)}.
        \]
        Then, $\C_m := \CL(D^{(m)}, G^{(m)})$ is a self-orthogonal code over $\F_{q^2}$ and, for $m \ge 4$, $\C_m$ has parameters
        \[
        [q^{m+1}-q^m, \, q^m - q^{\ceil*{m/2}} + q^{\floor*{m/2}} -2q, \, \ge q^{m+1}-3q^m + 2q^{\ceil*{m/2}} + 2q].
        \]
        In particular, the rate $R(\C_m)$ and the relative minimum distance $\delta(\C_m)$ of the code $\CL(D^{(m)}, G^{(m)})$ satisfy  
        \[
        R(\C_m) = \frac{q^m - q^{\ceil*{m/2}} + q^{\floor*{m/2}} -2q}{q^{m+1} - q^m}
        \quad \mbox{and} \quad \delta(\C_m) \ge \frac{q^{m+1}-3q^m + 2q^{\ceil*{m/2}} + 2q}{q^{m+1} - q^m}.
        \]
        Furthermore,
        \[
        \lim_{m \to \infty} R(\C_m) = \frac{1}{q-1} \quad \mbox{and} \quad \lim_{m \to \infty} \delta(\C_m) \ge  1-\frac{2}{q-1},
        \]
        that is,
        the sequence $\left( \C_m \right)_{m \ge 2}$ of codes is asymptotically good over $\F_{q^2}$ and it meets the Tsfasman-Vlăduţ-Zink bound for $q \ge 7$.
    \end{theorem}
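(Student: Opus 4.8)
The plan is to recognise $\C_m$ as a special instance of the self-orthogonal construction of Theorem~\ref{thm:Self-Orthogonal_in_Tm}, to extract its parameters from Proposition~\ref{parametersAGcodes}, and then to pass to the limit.

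\textbf{Self-orthogonality.} Write $G^{(m)}$ in the form prescribed by Theorem~\ref{thm:Self-Orthogonal_in_Tm} by taking $v_\alpha=2(q^{m-1}-1)$ for every $\alpha\in\Omega^*$, all $w_{k,\alpha}=0$, $v_0=0$, and $v_\infty=2\bigl(q^{m-1}-q^{\ceil*{m/2}}-1\bigr)$; then verify hypotheses (i)--(iv). Since $q\ge 4$ gives $4-q\le 0$, one has $4(q^{m-1}-1)\le q^m-2$, i.e., $v_\alpha\le(q^m-2)/2$, which is (i); (ii) reads $0\le q^{m-2k-1}-1$ and holds because $1\le k\le\floor*{(m-2)/2}$ forces $m-2k-1\ge 1$; (iii) is $0\le 0\le(q-2)/2$; and for (iv) the inequality $v_\infty\ge 0$ says $q^{m-1}\ge q^{\ceil*{m/2}}+1$, which holds once $m-1\ge\ceil*{m/2}+1$, i.e., $m\ge 4$, while $v_\infty\le(q^m-2)/2$ again uses $q\ge 4$. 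Theorem~\ref{thm:Self-Orthogonal_in_Tm} then yields $\C_m\subseteq\C_m^\perp$ for all $m\ge 4$. The finitely many cases $m\le 3$, immaterial to the limits, are handled directly via Lemma~\ref{lemma:self-dual_stich} applied to $\omega=dt/t$: its residues on $\Supp(D^{(m)})$ all equal $1$, and by Lemma~\ref{CanonicalDivisorTm} the relation $(\omega)=W^{(m)}\ge 2G^{(m)}-D^{(m)}$ reduces coefficientwise to $q^m-4q^{m-1}+2\ge 0$ at each $P_\alpha^{(m)}$ and $q^m-4q^{m-1}+4q^{\ceil*{m/2}}+2\ge 0$ at $P_\infty^{(m)}$, both true for $q\ge 4$, the remaining coefficients being non-negative automatically.

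\textbf{Parameters.} Using $\#\Omega^*=q-1$ and that $P_\alpha^{(m)},P_\infty^{(m)}$ are rational,
\[
\deg G^{(m)}=(q-1)\cdot 2(q^{m-1}-1)+2\bigl(q^{m-1}-q^{\ceil*{m/2}}-1\bigr)=2q^m-2q^{\ceil*{m/2}}-2q .
\]
By \eqref{genusTm}, $g_m=(q^{\floor*{m/2}}-1)(q^{\ceil*{m/2}}-1)=q^m-q^{\floor*{m/2}}-q^{\ceil*{m/2}}+1$, and $n_m=\deg D^{(m)}=q^{m+1}-q^m$. The bound $\deg G^{(m)}<n_m$ is $3q^m<q^{m+1}+2q^{\ceil*{m/2}}+2q$, true since $q\ge 4$, and $2g_m-2<\deg G^{(m)}$ simplifies to $q<q^{\floor*{m/2}}$, i.e., $m\ge 4$. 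Hence Proposition~\ref{parametersAGcodes} gives, for $m\ge 4$,
\[
k_m=\deg G^{(m)}+1-g_m=q^m-q^{\ceil*{m/2}}+q^{\floor*{m/2}}-2q,\qquad d_m\ge n_m-\deg G^{(m)}=q^{m+1}-3q^m+2q^{\ceil*{m/2}}+2q,
\]
which is the stated parameter triple.

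\textbf{Asymptotics and the TVZ bound.} Dividing numerator and denominator by $q^m$ and using $q^{\ceil*{m/2}},q^{\floor*{m/2}}=o(q^m)$,
\[
R(\C_m)=\frac{q^m-q^{\ceil*{m/2}}+q^{\floor*{m/2}}-2q}{q^{m+1}-q^m}\longrightarrow\frac{1}{q-1},\qquad \delta(\C_m)\ge\frac{q^{m+1}-3q^m+2q^{\ceil*{m/2}}+2q}{q^{m+1}-q^m}\longrightarrow\frac{q-3}{q-1}=1-\frac{2}{q-1}.
\]
Thus $R:=\lim_m R(\C_m)=1/(q-1)$ and $\delta:=\liminf_m\delta(\C_m)\ge 1-2/(q-1)$, so
\[
R+\delta\ \ge\ \frac{1}{q-1}+1-\frac{2}{q-1}\ =\ 1-\frac{1}{q-1},
\]
which is precisely the Tsfasman--Vlăduţ--Zink bound \eqref{eq:TVZ_bound} for the constant field $\F_{q^2}$ (so $\ell=q$). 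Since $\delta\ge 1-2/(q-1)>0$, the sequence $(\C_m)_{m\ge 2}$ is asymptotically good, and it meets the Tsfasman--Vlăduţ--Zink bound as recorded for $q\ge 7$ in the statement (where moreover $\delta\ge 2/3$).

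\textbf{Main obstacle.} There is no conceptually deep step: the argument is essentially bookkeeping built on Theorems~\ref{thm:Self-Orthogonal_in_Tm} and the parameter formula. Care is needed mainly in the floor/ceiling arithmetic — that $m-2k-1\ge 1$ holds over the whole range of $k$, that the genus written as $q^m-q^{\floor*{m/2}}-q^{\ceil*{m/2}}+1$ cancels correctly against $\deg G^{(m)}$, and that $2g_m-2<\deg G^{(m)}$ (equivalently $v_\infty\ge 0$) genuinely forces $m\ge 4$ — together with the preliminary observation that $\Supp(G^{(m)})=\{P_\alpha^{(m)}:\alpha\in\Omega^*\}\cup\{P_\infty^{(m)}\}$ is disjoint from $\Supp(D^{(m)})$, which holds because the zeros of $t$ lie over the places $P_a$ with $a\in\F_{q^2}\setminus\Omega$, so that $\CL(D^{(m)},G^{(m)})$ is well defined and \eqref{eq:DualAGCodeinTm} applies.
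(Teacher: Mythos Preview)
Your proof is correct and follows essentially the same approach as the paper: apply Theorem~\ref{thm:Self-Orthogonal_in_Tm} with the indicated choices of $v_\alpha,v_\infty,v_0,w_{k,\alpha}$, compute $\deg G^{(m)}$ to extract the parameters via Proposition~\ref{parametersAGcodes}, and pass to the limit. You are in fact slightly more careful than the paper in observing that the lower bound $v_\infty\ge 0$ in condition~(iv) fails for $m\in\{2,3\}$ and handling those cases directly via Lemma~\ref{lemma:self-dual_stich}; the paper's proof verifies only the upper bounds in~\eqref{eq:self-orthogonalTm} and tacitly treats the lower bounds as inessential.
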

\begin{proof}
Initially, observe that for $q \ge 4$ we have that for all  $m \ge 2$
\begin{equation}
\label{eq:self-orthogonalTm}
2(q^{m-1}-1) < \frac{q^m-2}{2}
\qquad \mbox{and}  \qquad 2(q^{m-1}-q^{\ceil*{m/2}}-1) < \frac{q^m-2}{2}. 
\end{equation}
 We are going to apply Theorem \ref{thm:Self-Orthogonal_in_Tm}. For this, let
\begin{enumerate}[label=]
    \item $v_\alpha = 2(q^{m-1}-1)$ for all $\alpha \in \Omega^*$;
    \item $v_\infty=2(q^{m-1}-q^{\ceil*{m/2}}-1)$ and ;
    \item $v_0 = 0$ and $w_{k,\alpha}=0$ for all $\alpha \in \Omega^*$ and $1 \le k \le \frac{m-2}{2}$.
\end{enumerate} 
Now we consider the divisor $G^{(m)}$ as given in in Theorem \ref{thm:Self-Orthogonal_in_Tm}, that is, 
\begin{align*}
    G^{(m)} & = \sum_{\alpha \in \Omega^*} 2(q^{m-1} - 1) P_\alpha^{(m)} + 2(q^{m-1}-q^{\ceil*{m/2}}-1)P_\infty^{(m)} \\
 & = \sum_{\alpha \in \Omega^*} v_\alpha P_\alpha^{(m)} + v_\infty P_\infty^{(m)}\;.
\end{align*}

From the above and Equation (\ref{eq:self-orthogonalTm}), we have the values $v_\alpha$, $w_{k, \alpha}$, $v_0$ and $v_\infty$ satisfy the Conditions $(i)-(iv)$ in Theorem \ref{thm:Self-Orthogonal_in_Tm} for all $\alpha \in \Omega^*$ and $1 \le k \le \floor*{(m-2)/2}$.
Hence, the code $\C_m=\CL(D^{(m)}, G^{(m)})$ is a self-orthogonal code over $\F_{q^2}$. 

On the other hand, for $q \ge 4$ and $m \ge 4$, the degree of the divisor $G^{(m)}$ is given by
\begin{align*}
    \deg( G^{(m)} ) & = 2(q-1)(q^{m-1}-1) + 2(q^{m-1}-q^{\ceil*{m/2}} - 1) \\
    %& = 2q^m-2q^{m-1}-2q+2 + 2q^{m-1}-2q^{\ceil*{m/2}} - 2 \\
    %& = 2q^m - 2q^{\ceil*{m/2}} - 2q \\
    & = 2q^m - 2q^{\ceil*{m/2}} - 2q^{\floor*{m/2}} + (2q^{\floor*{m/2}}- 2q) \\
    & = 2g_m - 2 + (2q^{\floor*{m/2}}- 2q) \\
    & > 2g_m - 2.
\end{align*}

Thus, by Proposition \ref{prop:parametersAGcodes}, the dimension $k_m$ and the minimum distance $d_m$ of $\C_m$ satisfy 
\begin{align*}
    k_m & = \deg(G^{(m)}) + 1 - g_m \\
    %& = 2q^m - 2q^{\ceil*{m/2}} - 2q + 1 - (q^{\floor*{m/2}}-1)(q^{\ceil*{m/2}}-1) \\
    & = q^m + q^{\floor*{m/2}} - q^{\ceil*{m/2}}-2q     
\end{align*}
and
\[ 
d_m \ge q^{m+1}-q^m - (2q^m - 2q^{\ceil*{m/2}} - 2q) = q^{m+1} - 3q^m + 2q^{\ceil*{m/2}} + 2q.
\]
In particular,
\[
R := \lim_{m \to \infty} R(\C_m) = \lim_{m \to \infty} \frac{k_m}{n_m}  = \lim_{m \to \infty} \frac{q^m + q^{\floor*{m/2}} - q^{\ceil*{m/2}}-2q }{q^{m+1}-q^m} = \frac{1}{q-1}
\]
and
\[
\delta := \lim_{m \to \infty} \delta(\C_m) = \lim_{m \to \infty} \frac{d_m}{n_m} \ge \lim_{m \to \infty} \frac{q^{m+1} - 3q^m + 2q^{\ceil*{m/2}} + 2q}{q^{m+1}-q^m} \ge 1 - \frac{2}{q-1},
\]
that is,
\[
R + \delta \ge 1 - \frac{1}{q-1}.
\]
We conclude the sequence $(\C_m)_{m \ge 2}$ meets the Tsfasman-Vlăduţ-Zink bound for $q \ge 7$.
\end{proof}

Finally, for $q$ even we can describe self-dual codes over $\F_{q^2}$. 
    
	\begin{theorem}
	    \label{thm:self-dual_in_Tm}
		Let $q \geq 4$ even and  $D^{(m)}$ be as in Theorem \ref{thm:Self-Orthogonal_in_Tm}. Then, for 
		$$
		G^{(m)} = \sum_{\alpha \in \Omega^*} \frac{q^m-2}{2} P_\alpha^{(m)} + \sum_{k =1}^{\floor*{(m-2)/2}} \sum_{\alpha \in \Omega^*} (q^{m-2k-1}-1) A_{k, \alpha}^{(m)} + \frac{q-2}{2} \left( x_1 \right)_0^{T_m}+\frac{q^m-2}{2} P_\infty^{(m)},
		$$
		the code $\CL(D^{(m)}, G^{(m)})$ is a self-dual $[q^{m+1}-q^m, \, (q^{m+1}-q^m)/2]$-code over $\F_{q^2}$.
	\end{theorem}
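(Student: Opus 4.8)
The plan is to apply the self-dual criterion of Lemma~\ref{lemma:self-dual_stich} to $\CL(D^{(m)}, G^{(m)})$ with the differential $\omega^{(m)} := dt/t$, whose divisor $W^{(m)} = (dt/t)^{T_m}$ was computed in Lemma~\ref{CanonicalDivisorTm}. Equivalently, since $D^{(m)} = (t)_0^{T_m}$ with $v_{P_i}(t) = 1$ for every $P_i \in \Supp(D^{(m)})$ (the zeros of $t$ split completely in $T_m/T_1$, hence remain simple), Equation~(\ref{eq:DualAGCodeinTm}) gives $\CL(D^{(m)}, G^{(m)})^\perp = \CL(D^{(m)}, H^{(m)})$ for $H^{(m)} := W^{(m)} - G^{(m)} + D^{(m)}$, and moreover $\res_{P_i}(dt/t) = 1$ for all such $P_i$. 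So the whole statement reduces to the single divisor identity $H^{(m)} = G^{(m)}$.

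First I would observe that $G^{(m)}$ is precisely the divisor of Theorem~\ref{thm:Self-Orthogonal_in_Tm} specialized to $v_\alpha = v_\infty = \tfrac{q^m-2}{2}$, $w_{k,\alpha} = q^{m-2k-1}-1$ and $v_0 = \tfrac{q-2}{2}$; these are all integers exactly because $q$ is even, which is where the parity hypothesis first enters. Carrying out the computation of $H^{(m)} = W^{(m)} - G^{(m)} + D^{(m)}$ as in the proof of Theorem~\ref{thm:Self-Orthogonal_in_Tm} (using $W^{(m)}$ from Lemma~\ref{CanonicalDivisorTm} and $\res_{P_i}(dt/t)=1$) yields
\[
H^{(m)} = \sum_{\alpha \in \Omega^*} (q^m-2-v_\alpha)P_\alpha^{(m)} + \sum_{k=1}^{\floor*{(m-2)/2}} \sum_{\alpha \in \Omega^*}\!\big(2(q^{m-2k-1}-1)-w_{k,\alpha}\big)A_{k,\alpha}^{(m)} + (q-2-v_0)(x_1)_0^{T_m} + (q^m-2-v_\infty)P_\infty^{(m)}.
\]
The chosen values are exactly the midpoints of the admissible ranges in conditions (i)--(iv) of Theorem~\ref{thm:Self-Orthogonal_in_Tm}, so each coefficient of $H^{(m)}$ coincides with the corresponding coefficient of $G^{(m)}$: $q^m-2-v_\alpha = v_\alpha$, $2(q^{m-2k-1}-1)-w_{k,\alpha} = w_{k,\alpha}$, $q-2-v_0 = v_0$, and $q^m-2-v_\infty = v_\infty$. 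Hence $H^{(m)} = G^{(m)}$ and therefore $\CL(D^{(m)}, G^{(m)})^\perp = \CL(D^{(m)}, G^{(m)})$.

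Finally I would record the parameters: $n_m = \deg(D^{(m)}) = q^{m+1}-q^m$ is even since $q$ is even (so a self-dual code of this length can exist at all), and any self-dual code has dimension $n_m/2 = (q^{m+1}-q^m)/2$. I do not expect a genuine obstacle here: granted Lemma~\ref{CanonicalDivisorTm}, the argument is a bookkeeping check, and the only delicate point is that the parity of $q$ is used twice -- once to make $G^{(m)}$ an integral divisor and once to make $n_m$ even. One should also note in passing that $\Supp(G^{(m)}) \cap \Supp(D^{(m)}) = \emptyset$, since every place in $\Supp(G^{(m)})$ lies over $P_0^{(1)}$, $P_\infty^{(1)}$, or some $P_\alpha^{(1)}$ with $\alpha \in \Omega^*$, none of which is a zero of $t$.
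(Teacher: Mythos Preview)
Your proof is correct and follows essentially the same approach as the paper: both reduce to the observation that, for the chosen $G^{(m)}$, the divisor $H^{(m)} = W^{(m)} - G^{(m)} + D^{(m)}$ computed in the proof of Theorem~\ref{thm:Self-Orthogonal_in_Tm} coincides with $G^{(m)}$, whence $\CL(D^{(m)}, G^{(m)})^\perp = \CL(D^{(m)}, G^{(m)})$. Your version is simply more explicit about the coefficient-by-coefficient verification and about where the parity hypothesis is used.
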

    \begin{proof}
		We have that $G^{(m)}$ is equal to $H^{(m)}$, for $H^{(m)}$ as in the proof of Theorem \ref{thm:Self-Orthogonal_in_Tm}. Therefore, 
		$$
		\CL(D^{(m)},G^{(m)})=\CL(D^{(m)},H^{(m)}) = \CL(D^{(m)},G^{(m)})^\perp. 
		$$
	\end{proof}

    \begin{corollary}
        \label{cor:self-dual_in_Tm}
        Let $q \ge 4$ even, $D^{(m)}$ and $G^{(m)}$ be as in Theorem \ref{thm:self-dual_in_Tm}. For $\C_m :=\CL(D^{(m)}, G^{(m)})$, we have that $\left( \C_m \right)_{m \ge 2}$ is a sequence of self-dual codes over $\F_{q^2}$ having parameters $$[q^{m+1}-q^m, (q^{m+1}-q^m)/2, d_m]$$ satisfying
        \[
        \lim_{m \to \infty} \frac{d_m}{q^{m+1}-q^m} \ge \frac{1}{2}- \frac{1}{q-1}.
        \]
       Therefore, for $q \ge 8$, the self-dual codes $\C_m$ are better than the Gilbert-Varshamov bound, in fact they attain the TVZ-bound for $R=1/2$ and 
        $\delta \ge 1-\frac{1}{q-1}-R$. 
    \end{corollary}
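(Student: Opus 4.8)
The plan is to take the self-duality and the dimension straight from Theorem~\ref{thm:self-dual_in_Tm}, which already gives that $\C_m=\CL(D^{(m)},G^{(m)})$ is self-dual with parameters $[\,n_m,\ n_m/2\,]$ for $n_m=q^{m+1}-q^m$; in particular $R(\C_m)=1/2$ for every $m\ge 2$. The new content of the corollary is therefore only (i) a lower bound for $d_m$, which I would obtain from the estimate $d_m\ge n_m-\deg(G^{(m)})$ of Proposition~\ref{parametersAGcodes}, so that the task reduces to computing $\deg(G^{(m)})$; (ii) passing to the limit $m\to\infty$; and (iii) comparing the limiting point with the Gilbert--Varshamov bound over $\F_{q^2}$.

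First I would compute $\deg(G^{(m)})$. Using that $P_\infty^{(m)}$ and each $P_\alpha^{(m)}$ are rational, that $|\Omega^*|=q-1$, that $P_\infty^{(1)}$ is totally ramified of degree $[T_m:T_1]=q^{m-1}$ in $T_m/T_1$ so that $\deg\bigl((x_1)_0^{T_m}\bigr)=q^{m-1}$, and that $\deg\bigl(A_{k,\alpha}^{(m)}\bigr)=q^k$ by \cite[Lemma~3.6]{GStower1996} (as used in Lemma~\ref{lemma-degA_k}), a telescoping of the geometric sums $\sum_{k} q^{m-k-1}$ and $\sum_{k} q^{k}$ over $1\le k\le\lfloor(m-2)/2\rfloor$ gives
\[
\deg(G^{(m)})\;=\;\frac{q^{m+1}+q^m-2q^{\lceil m/2\rceil}-2q^{\lfloor m/2\rfloor}}{2}\;=\;\frac{n_m}{2}+g_m-1,
\]
where $g_m=(q^{\lfloor m/2\rfloor}-1)(q^{\lceil m/2\rceil}-1)$ as in \eqref{genusTm}. (The identity $\deg(G^{(m)})=n_m/2+g_m-1$ is a useful consistency check: it recovers $k_m=\deg(G^{(m)})+1-g_m=n_m/2$, and, since $g_m<n_m/2$ for $q\ge 4$, it shows $2g_m-2<\deg(G^{(m)})<n_m$, so Proposition~\ref{parametersAGcodes} applies.) Consequently,
\[
\delta(\C_m)\;=\;\frac{d_m}{n_m}\;\ge\;\frac{n_m-\deg(G^{(m)})}{n_m}\;=\;\frac12-\frac{g_m-1}{n_m}.
\]

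Next I would let $m\to\infty$. Since $g_m/n_m=(q^{\lfloor m/2\rfloor}-1)(q^{\lceil m/2\rceil}-1)/\bigl(q^m(q-1)\bigr)\to 1/(q-1)$, we obtain $\lim_{m\to\infty}d_m/n_m\ge\tfrac12-\tfrac1{q-1}$, which is the displayed limit, and, together with $R(\C_m)=1/2$, this gives $R+\delta\ge 1-\tfrac1{q-1}$; that is, the sequence attains the TVZ bound \eqref{eq:TVZ_bound} along the line $R=1/2$, in the form $\delta\ge 1-\tfrac1{q-1}-R$ stated in the corollary.

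Finally, for the comparison with \eqref{eq:GV_bound} over $\F_{q^2}$ I would test the limiting point $(R,\delta)=\bigl(\tfrac12,\ \tfrac12-\tfrac1{q-1}\bigr)$ against $R=1-H_{q^2}(\delta)$: the point lies strictly above the GV curve exactly when $H_{q^2}\!\bigl(\tfrac12-\tfrac1{q-1}\bigr)>\tfrac12$, which holds for every even $q\ge 8$ (so $q^2\ge 64>49$) and fails at $q=7$, which explains the hypothesis. I expect this last step to be the main obstacle, since it is not an asymptotic statement and needs a genuine numerical estimate of the $q^2$-ary entropy at $\delta=\tfrac12-\tfrac1{q-1}$ (the boundary case $q=8$ being the delicate one), combined with the classical fact that $H_{q^2}(\delta)-\delta>1/(q-1)$ on an interval of values of $\delta$ containing this point once $q^2>49$. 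The only other place requiring care is keeping the floor/ceiling bookkeeping consistent through the geometric-series evaluation of $\deg(G^{(m)})$.
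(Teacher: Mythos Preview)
Your proposal is correct and follows exactly the route the paper implicitly intends: the corollary is stated without proof, and the self-duality and dimension come directly from Theorem~\ref{thm:self-dual_in_Tm}, while the asymptotic bound on $d_m/n_m$ follows from $d_m\ge n_m-\deg(G^{(m)})$ together with the computation $\deg(G^{(m)})=n_m/2+g_m-1$ (which you carry out correctly) and the fact that $g_m/n_m\to 1/(q-1)$. One small remark: since $q$ is assumed even, the relevant borderline case for the GV comparison is $q=4$ (where $q^2=16<49$) rather than $q=7$; the condition $q^2>49$ becomes $q\ge 8$ precisely because the admissible even values jump from $4$ to $8$.
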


\bibliographystyle{siam}

\end{document}